\documentclass[12pt]{article}

\usepackage{hyperref}
\hypersetup{
	colorlinks=true,
	linktoc=all,     
	linkcolor=blue,
	citecolor=red,
}
\usepackage{geometry}
\usepackage[usenames,dvipsnames]{color}
\usepackage{amssymb} 
\usepackage{bm}
\usepackage{amsmath,amsfonts,amsthm,mathrsfs}
\numberwithin{equation}{section}
\usepackage{geometry}

\usepackage{appendix}
\usepackage{tikz}
\usepackage[numbers]{natbib}

\usetikzlibrary{positioning}
\usetikzlibrary{arrows}
\newtheorem{remark}{Remark}
\newtheorem{definition}{Definition}[section]

\newtheorem{proposition}{Proposition}[section]

\newtheorem{theorem}{Theorem}[section]
\newtheorem{corollary}{Corollary}[section]
\newtheorem{lemma}{Lemma}[section]

\numberwithin{figure}{section}
\numberwithin{equation}{section}

\newcommand{\R}{\mathbb{R}}

\newcommand{\N}{\mathbb{N}}
\newcommand{\Sph}{\mathbb S}

\newcommand{\Pcal}{\mathcal P}
\newcommand{\Ccal}{\mathcal C}

\newcommand{\MR}{\mathcal M_R}

\newcommand{\dd}{\mathop{}\!\mathrm{d}}

\newcommand{\ddsigma}{\,d\sigma}

\newcommand{\ddx}{\,dx}

\newcommand{\norm}[1]{\left\|#1\right\|}
\newcommand{\abs}[1]{\left|#1\right|}

\DeclareMathOperator{\med}{med}
\DeclareMathOperator{\sgn}{sgn}
\DeclareMathOperator{\supp}{supp}
\DeclareMathOperator{\spanop}{span}

\usepackage{array,booktabs,tabularx}

\usepackage{algorithm}
\usepackage{algorithmic}
\usepackage{bm}

\usepackage{stmaryrd}

\begin{document}

\title{Sharp Sobolev Sandwich and Approximation Rates of Radon-Domain $L^p$ Ridge Integral Spaces for ReLU$^k$ Networks}
\author{Authors}
\author{Juncai He\footnotemark[1] \,\footnotemark[2] \and  Zitong Tian\footnotemark[2]}
\date{}

\maketitle
\renewcommand{\thefootnote}{\fnsymbol{footnote}}
\footnotetext[1]{Yau Mathematical Sciences Center, Tsinghua University, Haidian District, Beijing 100084, China.}
\footnotetext[2]{Qiuzhen College, Tsinghua University, Haidian District, Beijing 100084, China.}

\begin{abstract}
We develop the $L^p$ space and approximation theory for shallow neural networks with $\mathrm{ReLU}^k$ activations. The central object is the Radon-domain $L^p$ space
$\mathcal{R}L^p_k(\Omega)$ containing all functions on a bounded domain
$\Omega$ that admit a ridge integral representation whose coefficient density
belongs to $L^p$ in the Radon domain.  In the Hilbert case
$p=2$, we prove by elementary Fourier analysis that this space recovers the
critical Sobolev space $H^{k+(d+1)/2}(\Omega)$. For general
$1<p<\infty$, the identity becomes a sandwich for Bessel-potential Sobolev spaces.
The sharp gap of each side is exactly the Seeger--Sogge--Stein loss for
the Radon transform as a Fourier integral operator. This also clarifies how the
activation regularity and Radon back-projection jointly produce the regularity.
As an application, we
discretize the integral representation using a deterministic interpolation 
skeleton plus uniform sampling. This yields high-probability $L^p$
approximation rates and the optimal Hilbert rate $\mathcal{O}\!\big(n^{-\frac12-\frac{2k+1}{2d}}\big)$ at $p=2$ for linearized neural networks.
\end{abstract}

\quad \textbf{Keywords:} neural networks, Radon transform, integral representation, Sobolev sandwich, approximation rate

\quad \textbf{MSC:}  46E35, 44A12, 41A46, 65C05

\section{Introduction}

Although a great variety of specialized architectures now drive state-of-the-art 
results across machine learning, networks with a single hidden layer remain the 
basic building block of deep learning. We write 
such a shallow network as an element of
\begin{align}
    \Sigma_n^{\sigma}
    := \left\{ \sum_{i=1}^n a_i\,\sigma(w_i\cdot x+b_i)
    \;:\; a_i,b_i\in\R,\ w_i\in\R^d \right\},
    \label{eq:shallow-network}
\end{align}
where $\sigma:\R\to\R$ is a fixed activation function. We focus on the rectified
linear unit $\sigma(x)=\max\{0,x\}$ and its powers 
$\mathrm{ReLU}^k$ activations $\max\{0,x\}^k$ with $k\in\N_0$. For convenience of 
derivatives, we use normalized activations $\sigma_k(t)=t_+^k/k!$.

The basic convergence result for \eqref{eq:shallow-network} is qualitative: the
classical universal approximation
theorem~\cite{cybenko_ApproximationSuperpositionsSigmoidal_1989,%
hornik_ApproximationCapabilitiesMultilayer_1991,%
leshno_multilayerfeedforwardnetworks_1993} asserts that $\bigcup_{n}\Sigma_n^{\sigma}$
is dense in $C(K)$ for every compact $K\subset\R^d$, provided $\sigma$ is not a
polynomial. However, these results do not provide \emph{how many} neurons are
needed to reach a prescribed accuracy, which is the question taken up by quantitative
approximation theory.

Quantitative theory fixes a target class $\mathcal Z$ and asks for the rate at
which $\Sigma_n^{\sigma}$ approximates its members. For a bounded domain
$\Omega\subset\R^d$ we say that $\Sigma_n^{\sigma}$ approximates $\mathcal Z$ at
the $L^p$-rate $\alpha>0$ if
\begin{align*}
    \inf_{f_n\in\Sigma_n^{\sigma}}\norm{f-f_n}_{L^p(\Omega)}= \mathcal O(n^{-\alpha}),
    \qquad \forall f\in\mathcal Z .
\end{align*}
There are many target classes in the literature: Barron spaces, also
known as variation spaces~\cite{barron_universalapproximationbounds_1993,%
e_barronspaceflowinduced_2022,siegel_characterizationvariationspaces_2023},
spectral Barron spaces~\cite{barron_universalapproximationbounds_1993,%
klusowski_approximationcombinationsrelu_2018,siegel_HighorderApproximationRates_2022},
and classical Sobolev spaces~\cite{petrushev_approximationridgefunctions_1998,%
pinkus_approximationtheorymlp_1999,mao_ApproximationRatesShallow_2026}, 
classical smooth functions and H\"older spaces~\cite{lu_DeepNetworkApproximation_2021,%
shen_DeepNetworkApproximation_2020, shen_OptimalApproximationRate_2022}.

\paragraph{Barron spaces, variation spaces and Radon BV spaces.}
The Barron space $\mathcal B^{\sigma}$ consists of the functions admitting an
integral representation against the activations,
\begin{align*}
    \mathcal B^{\sigma}
    = \Big\{ f:\Omega\to\R : f(x)=\int_{\mathcal{M}}\sigma(w\cdot x-b)\,d\mu(w,b)
    \ \text{for a finite Radon measure }\mu \Big\},
\end{align*}
where $\mathcal{M} \subset \mathbb{R}^d \times \mathbb{R}$ is bounded.
These coincide with the variation spaces
of~\cite{e_barronspaceflowinduced_2022,siegel_characterizationvariationspaces_2023}.
A probabilistic argument going back to Maurey~\cite{pisier_RemarquesResultatNon_1981,%
Maurey1973-1974} and Jones~\cite{jones_simplelemmagreedy_1992}, applied in this
setting by~\cite{barron_universalapproximationbounds_1993,%
e_barronspaceflowinduced_2022,bach_breakingcursedimensionality_2017}, yields the
dimension-independent rate $\mathcal O(n^{-1/2})$ for approximating $\mathcal B^{\sigma}$
by $\Sigma_n^{\sigma}$, see the surveys~\cite{devore_nonlinearapproximation_1998,%
devore_neuralnetworkapproximation_2021}. Rates that exploit additional structure
were obtained in~\cite{klusowski_approximationcombinationsrelu_2018,%
makovoz_randomapproximantsneural_1996,li_twolayernetworks_2024}, and sharp
bounds for a range of activations in~\cite{siegel_sharpboundsapproximation_2024,%
siegel_optimalconvergencerates_2022}. In particular, for the $\mathrm{ReLU}^k$
activations, the optimal $L^2$-rate is
\begin{align*}
    \inf_{f_n\in\Sigma_n^{\sigma_k}}\norm{f-f_n}_{L^2(\Omega)}
    = \mathcal O\!\big(n^{-\frac12-\frac{2k+1}{2d}}\big)
    = \mathcal O\!\big(n^{-s_k/d}\big),
    \qquad s_k=\tfrac{d+2k+1}{2},
\end{align*}
which is attained on the $\mathrm{ReLU}^k$ variation
space~\cite{siegel_sharpboundsapproximation_2024}.

To test if a function $u$ belongs to the Barron space, one must find a finite measure $\mu$
such that $u$ admits the integral representation above.
Using the Radon transform,
the Radon-domain bounded-variation spaces $\mathcal{R}\mathrm{BV}^k$
of~\cite{parhi_banachspacerepresenter_2021,parhi_whatkindsfunctions_2022} recast this membership question in the Radon domain, a viewpoint introduced for ReLU
networks by~\cite{ongie_FunctionSpaceView_2019} and developed through the ridgelet
and inverse-Radon analysis in~\cite{sonoda_NeuralNetworkUnbounded_2017,%
unser_RidgesNeuralNetworks_2023,bartolucci_understandingneuralnetworks_2023}.
This function class is equivalent to the variation
definition~\cite{siegel_characterizationvariationspaces_2023}, in which the optimal
rate is likewise recovered~\cite{parhi2022ridge}.
A closely related Banach-space framework was developed in
\cite{unser_KernelMethodsNeural_2024}, where native spaces are built from an
operator and a generic Radon-domain norm, including $L^p$-type Radon-domain norms. 
 Our spaces $\mathcal{R}L^p_k(\Omega)$ follow the same
Radon-domain philosophy, but are specialized to bounded-bias
$\mathrm{ReLU}^k$ ridge integrals on $\Omega$.
The analysis operator used throughout this paper, and the sparsifying
action on a single neuron, is that
of~\cite{parhi_banachspacerepresenter_2021,parhi_whatkindsfunctions_2022}.

\paragraph{Spectral Barron spaces.}
Spectral Barron spaces sit closer to classical Fourier analysis and PDE
theory~\cite{hormander_ExistenceApproximationSolutions_2005,%
choulli_functionalanalysispartial_2025}. Originating
in~\cite{barron_universalapproximationbounds_1993}, the spectral Barron space
$\mathscr B$ collects functions whose Fourier transform has a finite weighted
moment. Approximation rates are studied
in~\cite{klusowski_approximationcombinationsrelu_2018,%
siegel_HighorderApproximationRates_2022,ma_uniformapproximationrates_2022,%
xu_finiteneuronmethod_2020}, embeddings
in~\cite{wu_embeddinginequalitiesbarrontype_2023}, and further structural results
in~\cite{meng_newfunctionspace_2022,liao_spectralbarronspace_2025a}.

\paragraph{Sobolev spaces.}
For classical Sobolev spaces, the benchmark entropy rate is
$n^{-s/d}$~\cite{devore_nonlinearapproximation_1998,
devore_OptimalNonlinearApproximation_1989,pinkus_NWidthsApproximationTheory_1985,
edmunds_FunctionSpacesEntropy_1996}.
Deep ReLU networks attain this rate, up to logarithmic factors and with
matching lower bounds~\cite{siegel_optimalapproximationrates_2023,
yang_optimalapproximationsobolev_2025}. For shallow $\mathrm{ReLU}^k$ networks, 
the natural critical smoothness $s_k = \frac{d+2k+1}{2}$
appears in~\cite{siegel_sharpboundsapproximation_2024,
yang_OptimalRatesApproximation_2025}.
Recent work~\cite{mao_ApproximationRatesShallow_2026} establishes sharp shallow
$\mathrm{ReLU}^k$ rates for Sobolev--Slobodeckij space with
error measured in $L^q(\Omega)$ using Radon-transform estimates.

\paragraph{The critical Sobolev space and integral representations.}
Among Sobolev targets, the space $H^{s_k}(\Omega)$ at the critical smoothness
\begin{align*}
    s_k := k+\frac{d+1}{2}=\frac{d+2k+1}{2}
\end{align*}
is distinguished for the $\mathrm{ReLU}^k$ activations: it is the reproducing
kernel Hilbert space (RKHS) generated by the bounded-bias ridge kernel
\begin{align*}
    K(x,y)=\int_{\MR}\sigma_k(w\cdot x-b)\,\sigma_k(w\cdot y-b)\,d\lambda(w,b),
\end{align*}
with $R>R_0:= \sup_{x\in\Omega}\abs{x}$, and $\MR:=\Sph^{d-1}\times[-R,R], d\lambda:=d\sigma\,db$.
 The spectral computation underlying this
identification goes back to~\cite{bach_breakingcursedimensionality_2017},
who diagonalized the $\mathrm{ReLU}^k$ kernel for the uniform feature
distribution, and was turned into a precise integral representation by~\cite{liu_integralrepresentationssobolev_2025}. Writing
$\tilde x=\binom{x}{1}$ and normalizing the parameters to the sphere
$\theta\in\Sph^{d}\subset\R^{d+1}$, they prove
\begin{align}
    H^{s_k}(\Omega)
    = \Big\{ \int_{\Sph^{d}}\sigma_k(\theta\cdot\tilde x)\,\psi(\theta)\,\dd\theta
    : \psi\in L^2(\Sph^{d}) \Big\},
    \quad
    \norm{f}_{H^{s_k}(\Omega)}\simeq\inf_{\psi}\norm{\psi}_{L^2(\Sph^{d})},
    \label{eq:LMX-representation}
\end{align}
identify $H^{s_k}(\Omega)$ as the $\mathrm{ReLU}^k$ RKHS, and show that
linearized networks with fixed, well-distributed inner parameters
$\{\theta_j^*\}\subset\Sph^{d}$ reach the optimal rate
$O(n^{-\frac12-\frac{2k+1}{2d}})$ in $H^{s_k}(\Omega)$.
\cite{petrushev_approximationridgefunctions_1998} also attains the same rate for
$H^{s_k}(\Omega)$ for a general class of activations containing $\sigma_k$.

The representation \eqref{eq:LMX-representation} is proved on the unit sphere
$\Sph^{d}$, reached by lifting $(w,b)\mapsto\theta=(w,b)/\sqrt{1+\abs{b}^2}$. This
normalization couples the weight $w$ and the bias $b$, which is convenient for
spherical harmonics but at odds with the asymmetric roles the two parameters play
in practice, where $w$ is a direction and $b$ a bounded bias. In this work, we keep them separate, working directly on the bounded-bias cylinder
$\MR=\Sph^{d-1}\times[-R,R]$, and we replace the spherical-harmonic machinery
of~\cite{liu_integralrepresentationssobolev_2025} by some elementary estimates on
the Radon side for the $L^2$-case. Moreover, we also develop the $L^p$ density
space theory and its approximations.

\paragraph{Random feature methods.}
Fixing the inner parameters $\{(w_i,b_i)\}$ at random and optimizing only the outer coefficients turns the nonconvex training of \eqref{eq:shallow-network}
into a convex least-squares problem. This is the random feature
method~\cite{rahimi_randomfeatureslargescale_2007,rahimi_WeightedSumsRandom_2008},
which realizes a kernel as a Monte-Carlo average of feature products. Drawing the
features i.i.d.\ from a fixed (e.g.\ uniform) distribution reproduces the kernel
in expectation, but the resulting approximation error decays only at the
Monte-Carlo rate $n^{-1/2}$~\cite{rahimi_WeightedSumsRandom_2008,%
rudi_GeneralizationPropertiesLearning_2017}. 
\cite{chen_dualityframeworkanalyzing_2025} studies random feature models beyond the Hilbertian regime by a duality framework. 
Recent work~\cite[Theorem 6.2]{liu_integralrepresentationssobolev_2025} provides theoretical
backing for random feature methods in the critical Sobolev space $H^{s_k}(\Omega)$ 
by linearized networks, with rate $\mathcal{O}\left( (n/\log n)^{-s_k/d}\right)$.

\paragraph{Contributions and main results.}
Throughout, $\Omega\subset\R^d$ is a bounded Lipschitz domain admitting a bounded
Sobolev-extension operator, and we fix $R>R_0$ so that the
bias variable stays in $[-R,R]$ over $\Omega$. The basic object is the
bounded-bias ridge activations
\begin{align*}
    \phi^k_\theta(x):=\sigma_k(w\cdot x-b),
    \qquad \theta=(w,b)\in\MR:=\Sph^{d-1}\times[-R,R],
\end{align*}
which we superpose against a density $g$ through the two ridge integrals
\begin{align*}
    \mathrm{S}^k_R g:=\int_{\MR}\phi^k_\theta\,g(\theta)\,d\lambda(\theta),
    \quad
    \mathrm{S}^k g:=\int_{\Sph^{d-1}\times\R}\phi^k_\theta\,g(\theta)\,d\lambda(\theta), \quad d\lambda:=d\sigma\,db,
\end{align*}
over the bounded-bias window $\MR$ and over the full parameter space.
Let $\langle \xi\rangle := (1+|\xi|^2)^{1/2}$. For $s \ge 0$, define
the inhomogeneous Sobolev norm and the homogeneous Sobolev seminorm on $\mathbb R^d$ by
\begin{align*}
    \|u\|_{H^s(\mathbb R^d)}^2
    := (2\pi)^{-d}\int_{\mathbb R^d} \langle \xi\rangle^{2s} |\widehat u(\xi)|^2\,d\xi, \quad
    |u|_{\dot H^s(\mathbb R^d)}^2
    := (2\pi)^{-d}\int_{\mathbb R^d} |\xi|^{2s} |\widehat u(\xi)|^2\,d\xi ,
\end{align*}
For functions $g=g(w,b)$ on $\mathbb S^{d-1}\times\mathbb R$, set
\begin{align*}
    \|g\|_{L^2(\mathbb S^{d-1};H^s(\mathbb R))}^2
    := \int_{\mathbb S^{d-1}}
        \|g(w,\cdot)\|_{H^s(\mathbb R)}^2\,d\sigma(w).
\end{align*}
Let $1<p<\infty$, $(I-\Delta)^{s/2}$ be the Bessel potential,
i.e.\ the Fourier multiplier with symbol $\langle\xi\rangle^{s}$.
Throughout, $W^{s,p}(\mathbb R^d)$ denotes the Bessel--potential Sobolev space
$\{u:(I-\Delta)^{s/2}u\in L^p\}$~\cite{stein_harmonicanalysis_1993,triebel2008function}.
For $p=2$ this is the usual $H^s$.
On a bounded Lipschitz domain $\Omega\subset\R^d$ we use the 
extension~\cite{adams_SobolevSpaces_2008} to define
\begin{align*}
     \norm{u}_{W^{s,p}(\Omega)}
    :=\inf\bigl\{\,\norm{u_e}_{W^{s,p}(\R^d)}:u_e\in W^{s,p}(\R^d),\ u_e|_\Omega=u\,\bigr\}.
\end{align*}
The function space theory relies on the following observation: pairing the
$\tfrac{d-1}{2}$ orders gained by the dual Radon transform $\mathcal R^*$ with
the $k+1$ orders carried by the ridge profile $\sigma_k$ accounts for the full
critical smoothness
\begin{align*}
    s_k=(k+1)+\tfrac{d-1}{2},
\end{align*}
and this is exactly what makes the ridge integral $\mathrm{S}^k_R$ land in $H^{s_k}$. 
When $p=2$, the gain is reachable. For general $1<p<\infty$ it 
is unavailable but carrying the
Seeger--Sogge--Stein~\cite{seeger1991regularity} loss $\delta_p:=(d-1)\abs{\frac1p-\frac12}$,
follows from the $L^p$ regularity of
$\mathcal R$ as a Fourier integral operator.

Fix a bounded open set $\Omega\subset\R^d$ and
$R>R_0:=\sup_{x\in\Omega}|x|$.  For $1<p<\infty$, we define
(see Definition~\ref{def:radon-lp-representation-space})
the \emph{Radon--domain $L^p$ space of order $k$} on $\Omega$ for the $\sigma_k$
activation to be the image of $\mathrm{S}^k_R$ on $L^p(\MR)$,
\begin{align*}
    \mathcal{R}L^p_k(\Omega)
    :=\mathrm{S}^k_R\bigl(L^p(\MR)\bigr)\subset L^p(\Omega),
\end{align*}
equipped with the quotient norm
\begin{align*}
    \norm{u}_{\mathcal{R}L^p_k(\Omega)}
    :=\inf\left\{
    \norm{g}_{L^p(\MR)}:
    g\in L^p(\MR),\ \mathrm{S}^k_R g=u\text{ on }\Omega
    \right\}.
\end{align*}
We prove (Theorem~\ref{thm:bounded-bias-decomposition} and
Corollary~\ref{cor:quotient-norm-equiv-sobolev}) that when $p=2$, $\mathcal{R}L^2_k(\Omega)$ is exactly $H^{s_k}(\Omega)$
by elementary Fourier analysis. This recovers~\eqref{eq:LMX-representation}
of~\cite{liu_integralrepresentationssobolev_2025}. For general $1<p<\infty$, we
prove (Theorem~\ref{the:radon-lp-sobolev-embedding} and
Proposition~\ref{prop:radon-lp-sandwich-sharp}) a sharp sandwich embedding theorem
between $\mathcal{R}L^p_k(\Omega)$ and Bessel-potential Sobolev spaces,
\begin{align*}
    W^{s_{k,p},p}(\Omega)
    \hookrightarrow
    \mathcal{R}L^p_k(\Omega)
    \hookrightarrow W^{r_{k,p},p}(\Omega),
\end{align*}
where
\begin{align*}
    s_{k,p}:=k+\frac{d+1}{2}+(d-1)\left|\frac1p-\frac12\right|,
    \qquad
    r_{k,p}:=k+\frac{d+1}{2}-(d-1)\left|\frac1p-\frac12\right|.
\end{align*}
Note that when $p=2$, $s_{k,2}=r_{k,2}=s_k$ and the sandwich collapses. 
Remark~\ref{rem:slobodeckij-sandwich} is a corollary of the Sobolev--Slobodeckij space
version.

With the above results, we can discretize the integration $\mathrm{S}^k_R g$ and use Monte Carlo sampling to obtain high-probability approximation rates.
Let $\mu_R:=\Lambda_R^{-1}\lambda$ be
the uniform probability measure on $\MR$, with $\Lambda_R:=2R\abs{\Sph^{d-1}}$, and
set $\gamma_{k,p}:=\frac{k+1/p}{d}, \quad \alpha_p:=1-\frac1{\min(p,2)}$.
We attain the approximation rate with high probability
(Theorem~\ref{thm:high-prob-random-feature}): 
for every $u\in\mathcal{R}L^p_k(\Omega)$ and $\delta\in(0,1)$, there exist
coefficients $(a_i)_{i\le N}$ and $(b_j)_{j\le n}$ with $N=\mathcal O(n)$ for which
\begin{align*}
    \Big\|u-\sum_{i=1}^N a_i\,\phi^k_{\zeta_i}-\sum_{j=1}^n b_j\,\phi^k_{\Theta_j}
    \Big\|_{L^p(\Omega)}
    \le
    C\bigl(\log(2/\delta)\bigr)^{\alpha_p}
    \norm{u}_{\mathcal{R}L^p_k(\Omega)}\,
    n^{-\alpha_p-\gamma_{k,p}}
\end{align*}
with probability at least $1-\delta$ over the random neurons $\Theta_j$
drawn i.i.d.\ \emph{uniformly} from $\mu_R$, the
deterministic neurons $\zeta_i$ being independent of $u$.
By the sandwich embedding theorem above, the same rate is attained for
every $u\in W^{s_{k,p},p}(\Omega)$. When $p=2$, the optimal Hilbert rate
$n^{-s_k/d}$ is recovered for every $u \in H^{s_k}(\Omega)$, removing
the log term of~\cite[Theorem 6.2]{liu_integralrepresentationssobolev_2025}. 
See Corollary~\ref{cor:rf-sobolev}.

It is instructive to compare the rate exponent $\alpha_p+\gamma_{k,p}$ with the
two smoothness exponents of the sandwich. A short computation gives
\begin{align*}
    \alpha_p+\gamma_{k,p}=\frac{r_{k,p}}{d}\quad(1<p\le2),
    \qquad
    \frac{r_{k,p}}{d}\le\alpha_p+\gamma_{k,p}<\frac{s_{k,p}}{d}\quad(2<p<\infty).
\end{align*}
Away from $p=2$ the exponent is strictly below $s_{k,p}/d$, and strictly above
$r_{k,p}/d$ once $2<p<\infty$ and $d>2$ indicating that $\mathcal{R}L^p_k(\Omega)$ 
is a genuine intermediate space rather than either endpoint of the sandwich.
See Remark~\ref{rem:lp-rate-comparison}.

\paragraph{Organization.}
Section~\ref{sec:radon-bv-sobolev} develops the $p=2$ theory by elementary Fourier analysis, with a review of the required Radon transform facts and the one-dimensional warm-up case.
Section~\ref{sec:radon-lp-theory} extends the theory to $1<p<\infty$ via
the Fourier integral operator regularity of the Radon transform, leading to the
Sobolev sandwich Theorem. It concludes with an additional microlocal proof of the
embedding and its sharpness.
As an application, Section~\ref{sec:approximation-radon-rf} discretizes
$\mathcal{R}L^p_k(\Omega)$ into a high-probability approximation method.
Section~\ref{sec:numerical-experiments} illustrates the predicted rates
numerically.
Section~\ref{sec:conclusion} concludes the paper with a summary of the 
main findings and a discussion of future work.
Appendix~\ref{app:proofs} collects the postponed proofs, while
Appendix~\ref{app:fio-background} provides the Fourier integral operator
background used in the paper.

\section{Radon-domain density and \texorpdfstring{$L^2$}{L2} theory}
\label{sec:radon-bv-sobolev}

\paragraph{Notation.}
We continue to use the notation of the introduction. In particular
$\sigma_k(t)=t_+^k/k!$, the bounded-bias
cylinder $\MR=\Sph^{d-1}\times[-R,R]$ with $d\lambda(w,b)=d\sigma(w)\,db$, and
the ridge integrals
\begin{align}
    \mathrm{S}^k_R g(x):&=\int_{\MR}\sigma_k(w\cdot x-b)\,g(w,b)\,d\lambda(w,b),\\
    \mathrm{S}^k g(x):&=\int_{\Sph^{d-1}\times\R}\sigma_k(w\cdot x-b)\,g(w,b)\,d\lambda(w,b)
    \label{eq:AR-Sk-recall}
\end{align}
on the bounded window and on the full parameter space.
We write $\mathcal{P}_k$ for the space of polynomials of degree at most $k$.
Throughout $d\ge2$ and $k\in\N_0$.

\paragraph{Constants and rate exponents.}
Let $\Omega \subset \R^d$ be a bounded domain with Lipschitz boundary~\cite{adams_SobolevSpaces_2008},
admitting a bounded extension operator for the Sobolev spaces we use below.
We use the following geometric constants, which depend only on $\Omega$ and $d$:
\begin{align*}
    R_0:=\sup_{x\in\Omega}\abs x,
    \quad R>R_0,
    \quad
    \MR:=\Sph^{d-1}\times[-R,R],
    \quad
    \Lambda_R:=\lambda(\MR)=2R\,\abs{\Sph^{d-1}}.
\end{align*}
For a Sobolev function $u$, we fix a support radius
$A\ge R_0$ with $\supp u\subset B_A(0)$. The Fourier-side constant
is $c_d:=\tfrac{1}{2(2\pi)^{d-1}}$. The smoothness and rate exponents used
throughout are
\begin{align*}
    s_k:&=k+\frac{d+1}{2}=\frac{d+2k+1}{2},
    \quad
    \delta_p:=(d-1)\abs{\frac1p-\frac12}, \nonumber\\
    \rho_p:&=\frac{d-1}{2}-\delta_p=(d-1)\min\!\left(\frac1p,1-\frac1p\right),
    \\
    s_{k,p}:&=s_k+\delta_p,
    \quad
    r_{k,p}:=s_k-\delta_p=k+1+\rho_p, \nonumber\\
    \gamma_{k,p}:&=\frac{k+1/p}{d},
    \quad
    \alpha_p:=1-\frac1{\min(p,2)},
\end{align*}
together with $Q_{d,k}:=\dim\Pcal_k(\R^d)=\binom{d+k}{k}$ and the conjugate
exponent $p':=\tfrac{p}{p-1}$. At $p=2$ every loss vanishes: $\delta_2=0$,
$s_{k,2}=r_{k,2}=s_k$, $\alpha_2=\tfrac12$, and $\alpha_2+\gamma_{k,2}=s_k/d$.
The exponents $\delta_p,\rho_p,s_{k,p},r_{k,p}$ appear in the $L^p$ theory of
Section~\ref{sec:radon-lp-theory}, while $\gamma_{k,p},\alpha_p$ in the
approximation rate of Section~\ref{sec:approximation-radon-rf}.

\paragraph{Radon transform preliminaries.}
We quickly recall some basic facts about the Radon transform,
see~\cite{ludwig_radontransformeuclidean_1966,helgason_integralgeometryradon_2010,
parhi_banachspacerepresenter_2021,parhi_distributionalextensioninvertibility_2024}
for detailed discussions.

We use the Fourier transform on $\R^d$ and the partial Fourier transform in the
bias variable,
\begin{align*}
    \widehat f(\xi) = \mathcal{F} f(\xi)
    &:=
    \int_{\R^d} f(x)\,e^{-ix\cdot\xi}\ddx,
    &
    \mathcal F_b\Phi(w,\omega)
    &:=
    \int_\R\Phi(w,b)\,e^{-ib\omega}\,db ,
\end{align*}
together with three Radon-domain operators: the Radon transform, its dual
(back-projection), and the ramp filter,
\begin{align*}
    \mathcal Rf(w,b):=\int_{\{x:\,w\cdot x=b\}}\!\!f\,ds,
    &\qquad
    \mathcal R^*\Phi(x):=\int_{\Sph^{d-1}}\Phi(w,w\cdot x)\ddsigma(w), \\
    \mathcal F_b\big(\Lambda^{s}\Phi\big)(w,\omega)
    &:=\abs{\omega}^{s}\,\mathcal F_b\Phi(w,\omega).
\end{align*}
We write $\mathcal S(\mathbb S^{d-1}\times\mathbb R)$ for the smooth functions on the
cylinder whose derivatives all decay rapidly in $b$, and
$\mathcal S'(\mathbb S^{d-1}\times\mathbb R)$ for its dual. $\mathcal F_b$ is a bijection
of each.
Throughout, $f\in\mathcal S(\R^d)$; all identities below extend to larger function
spaces by density. Let $(-\Delta)^{s/2}$ be the Fourier
multiplier $\mathcal{F} \left[(-\Delta)^{s/2}f\right](\xi)=\abs{\xi}^{s}\widehat f(\xi)$.
A basic polar-coordinate identity states that if $h(w,\omega)$ is invariant under
$(w,\omega)\mapsto(-w,-\omega)$, then
\begin{align}
    \int_{\Sph^{d-1}}\!\!\int_\R \abs{\omega}^{d-1}h(w,\omega)\,d\omega\,\ddsigma(w)
    =2\int_{\R^d} h\big(\tfrac{\xi}{\abs{\xi}},\abs{\xi}\big)\,d\xi .
    \label{eq:antipodal}
\end{align}

\smallskip\noindent\emph{Fourier slice theorem.}
Foliating $\R^d$ by the hyperplanes $\{w\cdot x=b\}$ and applying Fubini,
\begin{align}
    \mathcal F_b(\mathcal R f)(w,\omega)
    &=\int_\R\!\Big(\int_{w\cdot x=b}\!\!f\,ds\Big)e^{-ib\omega}\,db
    =\int_{\R^d}f(x)\,e^{-i\omega(w\cdot x)}\ddx
    =\widehat f(\omega w).
    \label{eq:Fourier-slice}
\end{align}

\smallskip\noindent\emph{Commutation with the fractional Laplacian.}
Applying \eqref{eq:Fourier-slice} to $(-\Delta)^{s/2}f$ and using $\abs{w}=1$,
$$
    \mathcal F_b\!\Big(\mathcal R\big((-\Delta)^{s/2}f\big)\Big)(w,\omega)
    =\abs{\omega w}^{s}\widehat f(\omega w)
    =\abs{\omega}^{s}\widehat f(\omega w)
    =\mathcal F_b\!\big(\Lambda^{s}\mathcal Rf\big)(w,\omega),
$$
so, by injectivity of $\mathcal F_b$,
\begin{align}\label{eq:radon-laplace-commute}
    \mathcal R\big((-\Delta)^{s/2} f\big)=\Lambda^{s}\big(\mathcal R f\big).
\end{align}

\smallskip\noindent\emph{Filtered back-projection.}
Using Fourier inversion of the partial Fourier transform to
$\Lambda^{d-1}\mathcal Rf$, Fourier slice theorem\eqref{eq:Fourier-slice},
and applying \eqref{eq:antipodal} with $h(w,\omega)=\widehat f(\omega w)e^{i\omega(w\cdot x)}$,
\begin{equation}\label{eq:fbp}
\begin{aligned}
    \mathcal R^*\Lambda^{d-1}\mathcal Rf(x)
    &=\frac1{2\pi}\int_{\Sph^{d-1}}\!\!\int_\R\abs{\omega}^{d-1}
      \widehat f(\omega w)\,e^{i\omega(w\cdot x)}\,d\omega\,\ddsigma(w) \\
    &=\frac1{\pi}\int_{\R^d}\widehat f(\xi)\,e^{i\xi\cdot x}\,d\xi
    =c_d^{-1}f(x).
\end{aligned}
\end{equation}
This gives the inversion formula
$f=c_d\,\mathcal R^*\Lambda^{d-1}\mathcal Rf$.

\smallskip\noindent\emph{Radon--Plancherel isometry.}
By the one-dimensional Plancherel identity in $b$, the slice theorem
\eqref{eq:Fourier-slice}, and \eqref{eq:antipodal} with
$h(w,\omega)=\abs{\widehat f(\omega w)}^2$,
\begin{align}\label{eq:radon-identity}
    \norm{\Lambda^{\frac{d-1}{2}}\mathcal Rf}_{L^2(\Sph^{d-1}\times\R)}^2
    &=\frac1{2\pi}\int_{\Sph^{d-1}\times\R}
      \left|\mathcal F_b\big(\Lambda^{\frac{d-1}{2}}\mathcal Rf\big)(w,\omega)\right|^2
      \,d\omega\,\ddsigma(w) \nonumber\\
    &=\frac1{2\pi}\int_{\Sph^{d-1}\times\R}\abs{\omega}^{d-1}
      \abs{\widehat f(\omega w)}^2\,d\omega\,\ddsigma(w)
    =\frac1{\pi}\int_{\R^d}\abs{\widehat f(\xi)}^2\,d\xi \nonumber\\
    &=\frac{(2\pi)^d}{\pi}\,\norm{f}_{L^2(\R^d)}^2
    =c_d^{-1}\,\norm{f}_{L^2(\R^d)}^2 .
\end{align}

\paragraph{From a sparsifying operator to the canonical density.}
The exponent $k+1$ is already visible in one dimension, where $\sigma_k=t_+^k/k!$
is the Green's function of $\partial_t^{k+1}$:
\begin{align*}
    \partial_t^{\,k+1}\sigma_k=\delta_0.
\end{align*}
In several variables, the analogous role is played 
by the Radon-domain operator~\cite{parhi_banachspacerepresenter_2021}
$\mathrm{R}_k :=c_d\,\partial_b^{\,k+1}\,\Lambda^{d-1}\,\mathcal R$,
which \emph{sparsifies} a single neuron: for a ridge atom
$x\mapsto\sigma_k(w_0\cdot x-b_0)$ with $(w_0,b_0)\in\Sph^{d-1}\times\R$,
\begin{align}
    \mathrm{R}_k \big[\sigma_k(w_0\,\cdot\,-b_0)\big]
    =\tfrac12\Big(\delta_{(w_0,b_0)}+(-1)^{k+1}\delta_{(-w_0,-b_0)}\Big)
    \qquad\text{on }\Sph^{d-1}\times\R,
    \label{eq:sparsify}
\end{align}
a pair of antipodal point masses on the Radon cylinder
\cite{parhi_banachspacerepresenter_2021,parhi_whatkindsfunctions_2022}. The
antipodal sign $(-1)^{k+1}$ reflects the evenness
$\mathcal Rf(w,b)=\mathcal Rf(-w,-b)$ of the Radon transform together with the
$k+1$ slice derivatives.

Following the terminology of frame and wavelet analysis, where an analysis
operator maps a signal to its coefficient representation and a synthesis
operator reconstructs a signal from coefficients~\cite{christensen_IntroductionFramesRiesz_2016}, 
we call 
\begin{align}
    \mathrm A^k:=\mathrm{R}_k =c_d\,\partial_b^{\,k+1}\,\Lambda^{d-1}\,\mathcal R,
    \label{eq:Rk-def}
\end{align}
the \emph{analysis operator} and $\mathrm S^k,\mathrm S^k_R$~\eqref{eq:AR-Sk-recall}
the \emph{synthesis operators}.

Identity \eqref{eq:sparsify} shows that $\mathrm{A}^k$ recovers the parameters of a single
neuron; applied to a general target $u$, it should return a density representing $u$. Read
as the composition \eqref{eq:Rk-def}, however, $\mathrm{A}^k u$ presupposes that
$\mathcal Ru$ is defined, which for $u\in\mathcal S'(\mathbb R^d)$ requires the
distributional theory of the Radon
transform~\cite{parhi_distributionalextensioninvertibility_2024}. That generality is not
needed here: by the slice theorem \eqref{eq:Fourier-slice}, $\mathrm{A}^k$ acts in the
bias variable as the Fourier multiplier $c_d\,(i\omega)^{k+1}|\omega|^{d-1}$, and we take
this multiplier as the definition. The next lemma shows that it is well posed on a class of
targets already containing both the Sobolev functions and the compactly supported
extensions used below; its proof is postponed to
Appendix~\ref{app:proof-of-analysis-wellposed}.

\begin{lemma}
\label{lem:analysis-multiplier-wellposed}
Let $u\in\mathcal S'(\mathbb R^d)$ be such that $\widehat u$ is a measurable function with
\begin{align}
    (1+|\cdot|)^{-N}\,\widehat u\in L^1(\mathbb R^d)
    \qquad\text{for some }N\ge0 .
    \label{eq:analysis-admissible}
\end{align}
Then $m_u(w,\omega):=(i\omega)^{k+1}|\omega|^{d-1}\,\widehat u(\omega w)$ is defined for
a.e.\ $(w,\omega)\in\mathbb S^{d-1}\times\mathbb R$, and there is a unique 
$g\in\mathcal S'(\mathbb S^{d-1}\times\mathbb R)$
with $\mathcal F_b g=c_d\,m_u$. In particular, \eqref{eq:analysis-admissible} holds for every
$u\in H^{s}(\mathbb R^d)$, $s\in\mathbb R$, and for every compactly supported
$u\in\mathcal S'(\mathbb R^d)$.
\end{lemma}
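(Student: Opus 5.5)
The plan is to exploit the polar-coordinate identity \eqref{eq:antipodal} to convert integrability of $\widehat u$ on $\R^d$ into a weighted integrability of $m_u$ on the cylinder. Once that is done, $m_u$ is a tempered distribution on $\Sph^{d-1}\times\R$, and we pull it back through the bijection $\mathcal F_b$ on $\mathcal S'(\Sph^{d-1}\times\R)$.

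\textbf{Step 1: a.e. definedness.} Since $\widehat u$ is only defined a.e. on $\R^d$, we must check that $(w,\omega)\mapsto \widehat u(\omega w)$ is well defined a.e. on the cylinder. The polar map $P:(w,\omega)\mapsto \omega w$ from $\Sph^{d-1}\times(\R\setminus\{0\})$ to $\R^d\setminus\{0\}$ is a smooth two-to-one local diffeomorphism. Its Jacobian is $|\omega|^{d-1}$, so preimages of null sets are null. Hence changing $\widehat u$ on a null set changes $m_u$ only on a null set, and $\{\omega=0\}$ is itself null. Measurability follows by composing with the smooth map $P$.

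\textbf{Step 2: weighted $L^1$ bound.} Apply \eqref{eq:antipodal} to the antipodally invariant function $h(w,\omega)=(1+|\omega|)^{-M}|\widehat u(\omega w)|$, after first extending the identity to nonnegative measurable functions by monotone convergence. Taking $M=N+k+1+(d-1)$ gives
\begin{align*}
\int_{\Sph^{d-1}}\int_\R (1+|\omega|)^{-M}|m_u|\,d\omega\,d\sigma
\le \int_{\Sph^{d-1}}\int_\R |\omega|^{d-1}(1+|\omega|)^{-N-(d-1)}|\widehat u(\omega w)|\,d\omega\,d\sigma .
\end{align*}
The right-hand side is at most $2\int_{\R^d}(1+|\xi|)^{-N}|\widehat u(\xi)|\,d\xi<\infty$. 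Therefore $m_u$ defines a tempered distribution through $\varphi\mapsto\int m_u\varphi$. This pairing is bounded by $\sup(1+|\omega|)^{M}|\varphi|$, which is a Schwartz seminorm on the cylinder. Since $\mathcal F_b$ is a bijection of $\mathcal S'(\Sph^{d-1}\times\R)$, we set $g:=\mathcal F_b^{-1}(c_d m_u)$; uniqueness is immediate from injectivity.

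\textbf{Step 3: the two classes.} For $u\in H^s$, Cauchy--Schwarz gives
\[
\int(1+|\xi|)^{-N}|\widehat u|\le \|\langle\xi\rangle^{s}\widehat u\|_{L^2}\,\|(1+|\xi|)^{-N}\langle\xi\rangle^{-s}\|_{L^2},
\]
which is finite once $N>d/2-s$. For compactly supported $u\in\mathcal S'$, the Paley--Wiener--Schwartz theorem gives that $\widehat u$ is a smooth (indeed entire) function of polynomial growth, $|\widehat u(\xi)|\le C(1+|\xi|)^{m}$. Taking $N>m+d$ then yields \eqref{eq:analysis-admissible}.

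The only delicate point is the measure-theoretic Step 1 together with the extension of \eqref{eq:antipodal} to nonnegative measurable $h$. Both follow from the change-of-variables formula for $P$, so I expect no real obstacle.
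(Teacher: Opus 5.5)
Your proposal is correct and follows essentially the same route as the paper: pulling back null sets through the polar map, proving a weighted $L^1$ bound on $m_u$ via \eqref{eq:antipodal}, inverting $\mathcal F_b$ on $\mathcal S'$, and using Cauchy--Schwarz and Paley--Wiener--Schwartz for the two admissible classes. The only blemish is cosmetic. The function you actually feed into \eqref{eq:antipodal} is $(1+|\omega|)^{-N-(d-1)}|\widehat u(\omega w)|$, not the $h$ you name. The extra $d-1$ in $M$ is also unnecessary: the Jacobian factor $|\omega|^{d-1}$ is already supplied by $m_u$, so the paper's choice $N+k+1$ suffices.
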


\begin{definition}
\label{def:canonical-density}
Let $u$ satisfy \eqref{eq:analysis-admissible}. The \emph{canonical density} of $u$ is the
distribution $g_u=\mathrm{A}^k u\in\mathcal S'(\mathbb S^{d-1}\times\mathbb R)$ determined
by Lemma~\ref{lem:analysis-multiplier-wellposed},
\begin{align*}
    \mathcal F_b g_u(w,\omega)
    =c_d\,(i\omega)^{k+1}|\omega|^{d-1}\,\widehat u(\omega w),
    \qquad\text{a.e.\ }(w,\omega)\in\mathbb S^{d-1}\times\mathbb R .
\end{align*}
\end{definition}

\begin{remark}
The definition is consistent with \eqref{eq:Rk-def}, since for $u\in\mathcal S(\mathbb R^d)$
the slice theorem \eqref{eq:Fourier-slice} identifies the multiplier with
$c_d\,\partial_b^{k+1}\Lambda^{d-1}\mathcal Ru$. It also reproduces the antipodal symmetry
of \eqref{eq:sparsify}: as $\widehat u(\omega w)$ is invariant under
$(w,\omega)\mapsto(-w,-\omega)$, every canonical density obeys
$g_u(-w,-b)=(-1)^{k+1}g_u(w,b)$.
\end{remark}

The two factors of the multiplier have transparent origins: $(i\omega)^{k+1}$ comes from
$\partial_b^{k+1}$ and $|\omega|^{d-1}$ from $\Lambda^{d-1}$, so $g_u$ inverts the integral
$\mathrm{S}^k$ (Proposition~\ref{prop:density-to-function}). The two operators realize one
principle: on the Radon side an $L^2$ density corresponds to an $H^{s_k}$ function, with the
gain
\begin{align*}
    s_k=(k+1)+\tfrac{d-1}{2}
\end{align*}
splitting into the $k+1$ orders supplied by the ridge profile $\sigma_k$ and the
$\tfrac{d-1}{2}$ orders supplied by the regularity of the dual Radon transform
$\mathcal R^*$. Theorem~\ref{thm:density-sobolev} makes this exact as a global isometry on
$\mathbb R^d$. Proposition~\ref{prop:density-to-function} synthesizes $u$ from $g_u$ via
$\mathrm{S}^k$, and Proposition~\ref{prop:ridge-integral-regularity} localizes the gain to
the bounded window $\mathcal M_R$, yielding the decomposition of
Theorem~\ref{thm:bounded-bias-decomposition} and the norm equivalence of
Corollary~\ref{cor:quotient-norm-equiv-sobolev}.

\paragraph{Warm up: $d=1$ case of sandwich embedding.}
The one-dimensional case already exhibits the whole mechanism. The loss
$\delta_p=(d-1)\abs{1/p-1/2}$ vanishes for every $p$, the two exponents coincide,
$s_{k,p}=r_{k,p}=k+1=s_k$, and the sandwich becomes a single norm equivalence
\begin{align*}
    \mathcal{R}L^p_k(\Omega)=W^{k+1,p}(\Omega).
\end{align*}
To see this, note that $\Sph^{0}=\{\pm1\}$ is two points, so
$\mathcal R^{*}\Phi(x)=\Phi(1,x)+\Phi(-1,-x)$ has order $(d-1)/2=0$ and contributes no
smoothing of its own. Writing $g=(g_+,g_-)$ for the two sheets (each extended by zero
off $[-R,R]$), the ridge integral is a pair of convolutions
\begin{align*}
    \mathrm{S}^k_R g(x)=(\sigma_k\ast g_+)(x)+(\sigma_k\ast g_-)(-x),
\end{align*}
hence a $(k+1)$-fold antiderivative of $g$. As $\sigma_k=t_+^k/k!$ solves
$\partial^{\,k+1}\sigma_k=\delta_0$, differentiating $k+1$ times returns the densities,
which is \eqref{eq:sparsify} for $d=1$, while the intermediate kernels
$\partial^{\,\ell}\sigma_k=\sigma_{k-\ell}$ with $0\le\ell\le k$ stay bounded on the
compact window. Both bounds follow at once, and since no singular integral intervenes
the equivalence in fact holds for every $1\le p\le\infty$.

This also explains the canonical density. When $d=1$ the analysis map
$\mathrm{A}^k =c_d\,\partial_b^{\,k+1}\Lambda^{d-1}\mathcal R$ collapses to
$\partial^{\,k+1}$, so it is literally $(k+1)$-fold differentiation, of which
$\mathrm{A}^k $ is the Radon-domain generalization in higher dimension. Like
$\partial^{\,k+1}$, whose null space is $\Pcal_k(\R^d)$, the operator $\mathrm{A}^k $
annihilates polynomials of degree $\le k$
\cite[Lemma~19]{parhi_banachspacerepresenter_2021}, so a density recovered from a given
$u$ is determined only up to such a polynomial, which is the polynomial term of
Proposition~\ref{prop:density-to-function}. 

In arbitrary dimension the theory is this one-dimensional identity applied one Radon slice at a time. Along each line normal to
$w\in\Sph^{d-1}$ the activation still inverts $\partial_b^{\,k+1}$, and the only
$d$-dependent step is the reassembly by $\mathcal R^{*}$, which supplies the remaining
$(d-1)/2$ orders of $s_k=(k+1)+\tfrac{d-1}{2}$ and, away from $p=2$, the loss
$\delta_p\propto d-1$. Introducing the Radon transform is thus exactly the device that
turns the $d$-dimensional ridge problem into the elementary fact above followed by a
controlled back-projection.

\subsection{An \texorpdfstring{$L^2$}{L2} isometry on \texorpdfstring{$\R^d$}{Rd}}
Using the isometry~\eqref{eq:radon-identity} of $L^2$ functions, we can show an isometry 
result.
\begin{theorem}
\label{thm:density-sobolev}
Let $u\in H^{s_k}(\R^d)$, and let $g_u=\mathrm{A}^k  u$ be the canonical
density of $u$.
Then $g_u\in L^2(\Sph^{d-1}\times\R)$, with the homogeneous isometry
\begin{align}
    \|g_u\|_{L^2(\Sph^{d-1}\times\R)}
    =
    \sqrt{c_d}\,|u|_{\dot H^{s_k}(\R^d)}.
    \label{eq:density-sobolev-main}
\end{align}
\end{theorem}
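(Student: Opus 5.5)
The plan is to work entirely on the Fourier side in the bias variable, using the multiplier definition of the canonical density (Definition~\ref{def:canonical-density}) together with the one-dimensional Plancherel identity and the polar-coordinate identity \eqref{eq:antipodal}. This is the same computation that produced the Radon--Plancherel isometry \eqref{eq:radon-identity}, with the extra weight $|\omega|^{2(k+1)+(d-1)}$ in place of $|\omega|^{d-1}$.

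First, since $u\in H^{s_k}(\R^d)$, Lemma~\ref{lem:analysis-multiplier-wellposed} applies. So $g_u\in\mathcal S'(\Sph^{d-1}\times\R)$ is well defined, with
\[
\mathcal F_b g_u(w,\omega)=c_d(i\omega)^{k+1}|\omega|^{d-1}\widehat u(\omega w)
\]
for a.e.\ $(w,\omega)$. The key point is to show that this function $m(w,\omega):=\mathcal F_b g_u(w,\omega)$ lies in $L^2(\Sph^{d-1}\times\R)$ and to compute its norm. We have
\[
|m(w,\omega)|^2=c_d^2|\omega|^{2k+2+2(d-1)}|\widehat u(\omega w)|^2 .
\]
The function $h(w,\omega):=|\omega|^{2k+2+d-1}|\widehat u(\omega w)|^2$ is invariant under $(w,\omega)\mapsto(-w,-\omega)$. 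Applying \eqref{eq:antipodal} to $h$ (which is nonnegative and measurable, so Tonelli justifies the change of variables without any a priori integrability) gives
\[
\int_{\Sph^{d-1}}\int_\R|m|^2\,d\omega\,d\sigma=2c_d^2\int_{\R^d}|\xi|^{2k+2+d-1}|\widehat u(\xi)|^2\,d\xi=2c_d^2(2\pi)^d|u|_{\dot H^{s_k}}^2,
\]
because $2k+2+d-1=2s_k$. This is finite, since $|\xi|^{2s_k}\le\langle\xi\rangle^{2s_k}$.

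Second, I transfer this back to $g_u$. For a.e.\ $w$, the slice $m(w,\cdot)$ lies in $L^2(\R)$. The tempered distribution $g_u$ is then the function whose partial Fourier transform in $b$ is $m$, namely $g_u(w,\cdot)=\mathcal F_b^{-1}m(w,\cdot)\in L^2(\R)$. One has to check that the $L^2$ inverse transform agrees with the distributional $\mathcal F_b^{-1}$ on $\mathcal S'(\Sph^{d-1}\times\R)$. This follows by pairing with test functions and using Fubini, because $m\in L^2(\Sph^{d-1}\times\R)$ defines an element of $\mathcal S'$ and $\mathcal F_b$ is injective there. Plancherel in $b$, in the normalization $\|\Phi\|_{L^2}^2=\frac1{2\pi}\|\mathcal F_b\Phi\|_{L^2}^2$, then yields
\[
\|g_u\|_{L^2}^2=\frac1{2\pi}\cdot2c_d^2(2\pi)^d|u|_{\dot H^{s_k}}^2=c_d^2\,2(2\pi)^{d-1}|u|_{\dot H^{s_k}}^2=c_d|u|_{\dot H^{s_k}}^2,
\]
using $c_d^{-1}=2(2\pi)^{d-1}$. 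Taking square roots gives \eqref{eq:density-sobolev-main}.

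The main obstacle is this identification step: making rigorous that the distribution supplied by Lemma~\ref{lem:analysis-multiplier-wellposed} is the $L^2$ function obtained slicewise, rather than merely having an $L^2$ Fourier transform in some formal sense. A second point is tracking constants. I would first verify the normalization on $u\in\mathcal S(\R^d)$, where everything is classical, and then use the fact that both sides are determined by $m$, so no density argument in $u$ is actually needed.
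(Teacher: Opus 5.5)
Your proposal is correct and is essentially the paper's argument. The paper also reduces the claim to one-dimensional Plancherel in $b$ plus the polar-coordinate identity \eqref{eq:antipodal}, but packages these as the commutation \eqref{eq:radon-laplace-commute} and the Radon--Plancherel identity \eqref{eq:radon-identity} for $u\in\mathcal S(\R^d)$ and then extends by density; you inline that computation and apply it directly to $u\in H^{s_k}(\R^d)$ through the multiplier definition, so your identification of the $\mathcal S'$ density with the slicewise $L^2$ function replaces the paper's brief density step, and your constants are right.
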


\begin{proof}
For $u\in\mathcal S(\R^d)$, Definition~\ref{def:canonical-density} together with
$k+d=\frac{d-1}{2}+s_k$ gives
\begin{align*}
    \|g_u\|_{L^2(\Sph^{d-1}\times\R)}^2
    = c_d^2\,\norm{\Lambda^{k+d}\mathcal Ru}_{L^2(\Sph^{d-1}\times\R)}^2
    = c_d^2\,\norm{\Lambda^{\frac{d-1}{2}}\mathcal R\,(-\Delta)^{s_k/2}u}_{L^2(\Sph^{d-1}\times\R)}^2,
\end{align*}
where the second equality is the commutation~\eqref{eq:radon-laplace-commute}.
Applying the Radon--Plancherel isometry~\eqref{eq:radon-identity} with
$f=(-\Delta)^{s_k/2}u$,
\begin{align*}
    \|g_u\|_{L^2(\Sph^{d-1}\times\R)}^2
    = c_d^2\,c_d^{-1}\,\norm{(-\Delta)^{s_k/2}u}_{L^2(\R^d)}^2
    = c_d\,|u|_{\dot H^{s_k}(\R^d)}^2 ,
\end{align*}
which proves~\eqref{eq:density-sobolev-main} by density.
\end{proof}

\subsection{Ridge integral representation for compactly supported functions}
The following technical lemma allows us to integrate $\sigma_k$ against the
density $g_u$ in the whole space $\R^d$. We postpone the proof to the
appendix~\ref{app:proof-of-k-moments}.
\begin{lemma}
\label{lem:canonical-density-weighted-L1}
Let $u\in H^{s_k}(\R^d)$ be compactly supported, and let $g_u=\mathrm{A}^k  u$ be
the canonical density of $u$. Then $(1+|b|)^k g_u\in L^1(\Sph^{d-1}\times\R)$.
\end{lemma}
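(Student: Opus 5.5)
The plan is to split the Radon cylinder into a near region $\{|b|\le 2A'\}$ and a far region $\{|b|>2A'\}$, where $A'>A$ and $\supp u\subset B_A(0)$. On the near region the weight $(1+|b|)^k$ is bounded, so Cauchy--Schwarz on a set of finite measure together with Theorem~\ref{thm:density-sobolev} gives
\begin{align*}
\int_{\Sph^{d-1}}\int_{|b|\le 2A'}(1+|b|)^k|g_u|\,db\,d\sigma
\le C\,\|g_u\|_{L^2(\Sph^{d-1}\times\R)}\le C\,|u|_{\dot H^{s_k}(\R^d)}.
\end{align*}
Everything then reduces to showing that $g_u$ decays fast enough in $b$ on the far region.

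\emph{Smooth case.} First I would take $u\in C_c^\infty(B_{A'})$. For each fixed $w$, the profile $\mathcal Ru(w,\cdot)$ is smooth and supported in $[-A',A']$. By Definition~\ref{def:canonical-density}, $g_u(w,\cdot)$ is obtained from $\mathcal Ru(w,\cdot)$ by the one-dimensional Fourier multiplier $c_d(i\omega)^{k+1}|\omega|^{d-1}$, which is homogeneous of degree $k+d$. The argument then splits by the parity of $d$.
\begin{itemize}
\item If $d$ is odd, the multiplier is a polynomial, namely $c_d\,(\pm1)\,\partial_b^{k+d}$. It is therefore local, $g_u(w,\cdot)$ is supported in $[-A',A']$, and the far region contributes nothing.
\item If $d$ is even, the multiplier equals $c_d\,i^{k+1}\sgn(\omega)^{k+1}\omega^{k+d}$. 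This is a Hilbert transform (or the identity) composed with $\partial_b^{k+d}$. For $|b|>2A'$ the point $b$ lies off the support, so the principal value is an honest integral. Integrating by parts $k+d$ times then gives
\begin{align*}
g_u(w,b)=C_{d,k}\int_{-A'}^{A'}\frac{\mathcal Ru(w,t)}{(b-t)^{k+d+1}}\,dt,
\qquad
|g_u(w,b)|\le C\,|b|^{-(k+d+1)}\,\|\mathcal Ru(w,\cdot)\|_{L^1(\R)}.
\end{align*}
\end{itemize}
Two facts close the smooth case. First, $\|\mathcal Ru(w,\cdot)\|_{L^1}\le\|u\|_{L^1}\le C_{A'}\|u\|_{L^2}$. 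Second, $(1+|b|)^k|b|^{-(k+d+1)}$ is integrable on $|b|>2A'$ because $d+1>1$. Combined with the near-region bound, this yields
\begin{align*}
\|(1+|b|)^k g_u\|_{L^1(\Sph^{d-1}\times\R)}\le C\,\|u\|_{H^{s_k}(\R^d)}.
\end{align*}

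\emph{General case.} For a general compactly supported $u\in H^{s_k}$ I would mollify, setting $u_\varepsilon=u*\eta_\varepsilon$. These functions lie in $C_c^\infty(B_{A'})$ once $\varepsilon<A'-A$, and $u_\varepsilon\to u$ in $H^{s_k}$. By the isometry of Theorem~\ref{thm:density-sobolev}, $g_{u_\varepsilon}\to g_u$ in $L^2(\Sph^{d-1}\times\R)$, so a subsequence converges almost everywhere. Fatou's lemma then transfers the uniform weighted $L^1$ bound to $g_u$.

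I expect the main obstacle to be the far-field kernel computation in even dimensions. Its delicate points are justifying the off-support formula for the nonlocal Hilbert-transform factor and tracking the integration by parts. The approximation step is what avoids needing the distributional Radon transform for a general $u$ directly.
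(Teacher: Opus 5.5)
Your argument is correct, but it takes a different route from the paper. The paper stays entirely on the Fourier side. Since $u$ has compact support, $\widehat u$ is smooth and $\nabla^\ell\widehat u$ is the transform of $(-ix)^\alpha u\in H^{s_k}(\R^d)$. It combines this with $|\partial_\omega^r[(i\omega)^{k+1}|\omega|^{d-1}]|\le C|\omega|^{k+d-r}$ for $r\le k+1$, the Leibniz rule, and the polar identity \eqref{eq:antipodal}. This shows $\partial_\omega^j\mathcal F_b g_u\in L^2$ for $0\le j\le k+1$, so $(1+|b|)^{k+1}g_u\in L^2$ by Plancherel in $b$. It finishes by Cauchy--Schwarz against $(1+|b|)^{-1}\in L^2(\R)$. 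That route needs no parity split, no off-support kernel computation, and no mollification/Fatou step, and it works directly with the multiplier definition of $g_u$.

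Your near/far split instead shows where the tail of $g_u$ comes from. It vanishes identically for odd $d$, because the operator is local. For even $d$ it comes solely from the one Hilbert transform, with pointwise decay $|b|^{-(k+d+1)}$ controlled by $\|u\|_{L^1}$. This proves more than the lemma asks, namely $(1+|b|)^m g_u\in L^1$ for every $m<k+d$. The price is the smooth-case identification of $g_u$ with $\mathcal H\partial_b^{k+d}\mathcal Ru$ (up to constants) and the approximation step. You handle the approximation correctly via linearity of $u\mapsto g_u$ and the isometry of Theorem~\ref{thm:density-sobolev}, which is proved before this lemma, so there is no circularity.

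One small slip: for even $d$ the multiplier is $c_d\,i^{k+1}\sgn(\omega)\,\omega^{k+d}$, equivalently $c_d\,i^{k+1}\sgn(\omega)^{k+1}|\omega|^{k+d}$. It is not $c_d\,i^{k+1}\sgn(\omega)^{k+1}\omega^{k+d}$, which is off by a factor $\sgn(\omega)^k$. So for even $d$ there is always exactly one Hilbert transform, and the ``identity'' case never occurs. This does not affect your argument, since the far-field bound holds a fortiori for a local operator.
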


Integrating the density $g_u$ recovers $u$ up to a polynomial of degree $k$.
\begin{proposition}
\label{prop:density-to-function}
Let $u\in H^{s_k}(\R^d)$ be compactly supported, with canonical density $g_u = \mathrm{A}^k  u$.
Then there exists $p\in\mathcal P_k(\R^d)$ such that
\begin{align*}
    u(x)=\mathrm{S}^k g_u(x)+p(x)
\end{align*}
holds pointwise for every $x\in\R^d$. More compactly, one may have
\begin{align*}
    \mathrm{S}^k\mathrm{A}^k=\mathrm{Id}\ \ 
    \text{on }H^{s_k}(\mathbb{R}^d)\cap\mathcal E'(\mathbb{R}^d),\ \ 
    \mod \mathcal{P}_k(\mathbb{R}^d)
\end{align*}
\end{proposition}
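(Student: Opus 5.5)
The argument is distributional: show that $v:=u-\mathrm{S}^k g_u$ has Fourier transform supported at the origin, hence $v$ is a polynomial, and then use growth bounds to force its degree to be at most $k$.

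\textbf{Step 1: $\mathrm{S}^k g_u$ is well defined.} By Lemma~\ref{lem:canonical-density-weighted-L1}, $(1+|b|)^k g_u\in L^1(\Sph^{d-1}\times\R)$. Since $|\sigma_k(w\cdot x-b)|\le C(1+|x|)^k(1+|b|)^k$, the integral $\mathrm{S}^k g_u(x)$ converges absolutely for every $x$. It is continuous in $x$ by dominated convergence, and it grows at most like $(1+|x|)^k$. Since $u\in H^{s_k}$ with $s_k>d/2$, $u$ is continuous and compactly supported. So $v$ is a continuous tempered function with $|v(x)|\le C(1+|x|)^k$, and any pointwise identity will follow once $v$ is identified as a polynomial.

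\textbf{Step 2: reduce to the Fourier side off the origin.} Fix a test function $\varphi\in\mathcal S(\R^d)$ with $\widehat\varphi\in C_c^\infty(\R^d\setminus\{0\})$. Pairing with $\varphi$ and using Fubini, which is justified by Step 1 and the rapid decay of $\varphi$,
\begin{align*}
\langle \mathrm{S}^k g_u,\varphi\rangle=\int_{\Sph^{d-1}}\int_\R g_u(w,b)\,\big(\sigma_k*_1\mathcal R\varphi(w,\cdot)\big)(b)\,db\,d\sigma(w).
\end{align*}
Here $\int\sigma_k(w\cdot x-b)\varphi(x)\,dx=\int_\R\sigma_k(t-b)\,\mathcal R\varphi(w,t)\,dt$.

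By the slice theorem, $\mathcal F_b\mathcal R\varphi(w,\omega)=\widehat\varphi(\omega w)$, which vanishes near $\omega=0$. The convolution with $\sigma_k$ therefore acts in $\omega$ as multiplication by $(i\omega)^{-(k+1)}$ with the appropriate sign or reflection convention; the distributional transform of $t_+^k/k!$ reduces to this on functions vanishing near $0$. Next apply Parseval in $b$ with $\mathcal F_b g_u=c_d(i\omega)^{k+1}|\omega|^{d-1}\widehat u(\omega w)$. The $(i\omega)^{\pm(k+1)}$ factors cancel, up to the reflection $b\mapsto -b$, which is handled by the antipodal symmetry. This leaves
\begin{align*}
\frac{c_d}{2\pi}\int_{\Sph^{d-1}}\int_\R|\omega|^{d-1}\widehat u(\omega w)\overline{\widehat{\bar\varphi}(\omega w)}\,d\omega\,d\sigma.
\end{align*}
By \eqref{eq:antipodal} this equals $\frac{c_d}{\pi}\int\widehat u\,\overline{\widehat{\bar\varphi}}\,d\xi=\langle u,\varphi\rangle$, exactly as in the filtered back-projection computation \eqref{eq:fbp}.

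Hence $\langle v,\varphi\rangle=0$ for all such $\varphi$, so $\supp\widehat v\subset\{0\}$ and $v$ is a polynomial. The growth bound from Step 1 then forces $\deg v\le k$. Setting $p:=v$, and using that both sides are continuous, gives the pointwise identity for every $x$.

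\textbf{Main obstacle.} The hard step is the justification in Step 2. Two points need care: the Parseval pairing between $g_u$, which is only known to be weighted-$L^1$, and $\sigma_k*\mathcal R\varphi$; and the sign and reflection bookkeeping for $\mathcal F_b$ of $\sigma_k(t-b)$ as a function of $b$. For the first, I would note that $\sigma_k*\mathcal R\varphi(w,\cdot)$ is a Schwartz function in $b$, since its transform $\widehat\varphi(\omega w)/(i\omega)^{k+1}$ is smooth and compactly supported away from $0$. Moreover $g_u\in L^2$ by Theorem~\ref{thm:density-sobolev}, so the $L^2$ Plancherel theorem in $b$ applies directly. The remaining cancellation is then the computation already carried out in \eqref{eq:fbp}.
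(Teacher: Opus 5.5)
Your argument is correct, but it takes a different route from the paper.

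\textbf{The paper's route.} It stays on the Radon side. It forms the slice-wise antiderivative $F(w,t)=\int_\R\sigma_k(t-b)\,g_u(w,b)\,db$, so that $\mathrm{S}^k g_u=\mathcal R^*F$ and $\partial_t^{k+1}F=g_u$. It then compares $F$ with another $(k+1)$-fold antiderivative $F_0=c_d\Lambda^{d-1}\mathcal Ru$ of $g_u$, which back-projects to $u$ by \eqref{eq:fbp}. The conclusion follows because $F-F_0$ is a polynomial of degree at most $k$ in $t$ with distributional coefficients in $w$, and the back-projection of such a polynomial lies in $\mathcal P_k(\R^d)$.

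\textbf{Your route.} You test $v=u-\mathrm{S}^k g_u$ only against $\varphi$ with $\widehat\varphi\in C_c^\infty(\R^d\setminus\{0\})$. There $\mathcal F\sigma_k$ is the honest function $(i\omega)^{-(k+1)}$, the function $b\mapsto\int\sigma_k(t-b)\mathcal R\varphi(w,t)\,dt$ is Schwartz, and the pairing collapses to the filtered back-projection computation. Hence $\supp\widehat v\subset\{0\}$. The degree bound then comes from your growth estimate rather than from the structure of $\ker\partial_t^{k+1}$.

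\textbf{What each buys.} Your route sidesteps the distributional points the paper passes over quickly: extending $\mathcal R^*$ to $\mathcal S'(\Sph^{d-1}\times\R)$, describing solutions of $\partial_t^{k+1}Q=0$ with distributional coefficients, and applying \eqref{eq:fbp} to a non-Schwartz $u$. The cost is modest: you use Theorem~\ref{thm:density-sobolev} to justify the $L^2$ Plancherel step. The paper's route in return identifies the polynomial explicitly as $-\mathcal R^*Q$. This is the back-projection of the antiderivative ambiguity on each slice, mirroring the $d=1$ warm-up.

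\textbf{The step you flagged.} No symmetry of $g_u$ is needed there. With $h_w(b)=\int\sigma_k(t-b)\mathcal R\varphi(w,t)\,dt$ one has $\mathcal F_b h_w(\omega)=(-i\omega)^{-(k+1)}\widehat\varphi(\omega w)$. The bilinear Parseval identity $\int g h\,db=\frac{1}{2\pi}\int\mathcal F_b g(\omega)\,\mathcal F_b h(-\omega)\,d\omega$ then cancels $(i\omega)^{k+1}$ exactly. What remains is $|\omega|^{d-1}\widehat u(\omega w)\widehat\varphi(-\omega w)$. Antipodal invariance enters only when you apply \eqref{eq:antipodal} at the end.
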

\begin{proof}
By Lemma~\ref{lem:canonical-density-weighted-L1}, for a.e.\ $w$ the function
\begin{align*}
    F(w,t):=\int_\R \sigma_k(t-b)\,g_u(w,b)\,db
\end{align*}
is well defined for every $t$, has at most polynomial growth of degree $k$ in
$t$, and satisfies
\begin{align}
    \mathrm{S}^k g_u=\mathcal R^*F,
    \qquad
    \partial_t^{k+1}F=g_u
    \quad\text{in }\mathcal S'(\Sph^{d-1}\times\R).
    \label{eq:S_k-to-dual-Radon}
\end{align}
Let $F_0:=c_d\,\Lambda^{d-1}\mathcal Ru$, that is,
$\mathcal F_bF_0(w,\omega):=c_d\abs{\omega}^{d-1}\widehat u(\omega w)$. Then
$\partial_t^{k+1}F_0=g_u$ as well, since
$(i\omega)^{k+1}\mathcal F_bF_0=\mathcal F_bg_u$ by
Definition~\ref{def:canonical-density}. The filtered back-projection~\eqref{eq:fbp} gives
$\mathcal R^*F_0=c_d\,\mathcal R^*\Lambda^{d-1}\mathcal Ru=u,\text{ in }\mathcal S'(\R^d).$
The difference $Q:=F-F_0$ satisfies $\partial_t^{k+1}Q=0$, hence is a
polynomial of degree $\le k$ in $t$,
$Q(w,t)=\sum_{j=0}^k a_j(w)\,t^j, a_j\in\mathcal D'(\Sph^{d-1}),$
so that
\begin{align*}
    \mathcal R^*Q(x)
    =\sum_{j=0}^k \langle a_j(w),(w\cdot x)^j\rangle
    \in\mathcal P_k(\R^d).
\end{align*}
Combining $\mathcal R^*F_0=u$ with \eqref{eq:S_k-to-dual-Radon},
\begin{align*}
    \mathrm{S}^k g_u=\mathcal R^*F=\mathcal R^*F_0+\mathcal R^*Q=u+\mathcal R^*Q
    \quad\text{in }\mathcal S'(\R^d),
\end{align*}
hence $u=\mathrm{S}^k g_u+p$ with $p:=-\mathcal R^*Q\in\mathcal P_k(\R^d)$.

Finally $s_k=k+(d+1)/2>d/2$, so $u$ has a continuous representative. By the
weighted $L^1$ bound of Lemma~\ref{lem:canonical-density-weighted-L1}, $\mathrm{S}^k g_u$
is given by a locally absolutely convergent integral and is continuous by dominated
convergence, so the identity holds pointwise for every $x$.
\end{proof}

\subsection{Bounded-bias regularization and
\texorpdfstring{$L^2$}{L2} integral representation of
\texorpdfstring{$H^{k+(d+1)/2}(\Omega)$}{Hk+(d+1)/2(Omega)} }
The dual Radon transform gives a derivative gain of order $(d-1)/2$ when $p=2$.
We prove it by elementary Fourier analysis. Note that the following regularity lemma is stronger
than Lemma~\ref{lem:radon-lp-mapping} when $p=2$, since it requires only regularity in the 
bias variable.

\begin{lemma}
\label{lem:localized-dual-radon}
Let $\eta\in C_c^\infty(\R^d)$, $\psi\in C_c^\infty(\Sph^{d-1}\times\R)$, and
$r\ge0$. Then for every $\Phi\in L^2(\Sph^{d-1};H^r(\R))$,
\begin{align}
    \norm{\eta\,\mathcal R^*(\psi\Phi)}_{H^{r+(d-1)/2}(\R^d)}
    \le C_{\eta,\psi,r}\,\norm{\Phi}_{L^2(\Sph^{d-1};H^r(\R))}.
    \label{eq:localized-dual-radon-estimate}
\end{align}
\end{lemma}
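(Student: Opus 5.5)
Set $\Psi:=\psi\Phi$. Since $\psi$ is smooth with compact support, multiplication by $\psi$ is bounded on $L^2(\Sph^{d-1};H^r(\R))$, with a constant depending only on finitely many $b$-derivatives of $\psi$ (by interpolation between integer orders). The slice-wise support of $\Psi$ also lies in a fixed interval $[-B,B]$. It therefore suffices to prove
$$\norm{\eta\,\mathcal R^*\Psi}_{H^{r+(d-1)/2}(\R^d)}\lesssim\norm{\Psi}_{L^2(\Sph^{d-1};H^r(\R))}$$
for $\Psi$ supported in $\Sph^{d-1}\times[-B,B]$. By density we may take $\Psi$ smooth.

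The plan is to compute $\widehat{\eta\mathcal R^*\Psi}$ directly. Write $G(w,\omega):=\mathcal F_b\Psi(w,\omega)$. Inverting in $b$ gives
$$\mathcal R^*\Psi(x)=\frac1{2\pi}\int_{\Sph^{d-1}}\int_\R G(w,\omega)e^{i\omega w\cdot x}\,d\omega\,d\sigma(w),$$
so that
$$\widehat{\eta\mathcal R^*\Psi}(\xi)=\frac1{2\pi}\int_{\Sph^{d-1}}\int_\R G(w,\omega)\,\widehat\eta(\xi-\omega w)\,d\omega\,d\sigma(w).$$
The heuristic is that $\widehat\eta$ is a Schwartz bump, so the mass sits near $\xi=\omega w$. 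The change of variables $\xi'=\omega w$ from polar to Cartesian coordinates has Jacobian $|\omega|^{d-1}$, as in \eqref{eq:antipodal}. Hence the weight $\langle\xi\rangle^{r+(d-1)/2}$ should become $\langle\omega\rangle^{r}$ times the square root of that Jacobian, which is exactly what the $L^2$ norm of $\langle\omega\rangle^{r}G$ controls.

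To make this rigorous, substitute $\xi'=\omega w$ and set $H(\xi'):=G(\xi'/|\xi'|,|\xi'|)+G(-\xi'/|\xi'|,-|\xi'|)$. This gives $\widehat{\eta\mathcal R^*\Psi}(\xi)=\frac1{2\pi}\int_{\R^d}|\xi'|^{-(d-1)}H(\xi')\widehat\eta(\xi-\xi')\,d\xi'$. Then Peetre's inequality $\langle\xi\rangle^{s}\le 2^{|s|}\langle\xi-\xi'\rangle^{|s|}\langle\xi'\rangle^{s}$ and Young's inequality ($L^1*L^2\to L^2$, using $\langle\cdot\rangle^{|s|}\widehat\eta\in L^1$) reduce the claim to the bound
$$\int_{\R^d}\langle\xi'\rangle^{2r+d-1}|\xi'|^{-2(d-1)}|H(\xi')|^2\,d\xi'\lesssim\int_{\Sph^{d-1}}\int_\R\langle\omega\rangle^{2r}|G|^2\,d\omega\,d\sigma.$$
Going back to polar coordinates, the left side is comparable to $\int\int\langle\omega\rangle^{2r+d-1}|\omega|^{-(d-1)}|G|^2\,d\omega\,d\sigma$. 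For $|\omega|\ge1$ this is $\lesssim\langle\omega\rangle^{2r}|G|^2$, as desired.

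The main obstacle is the low-frequency region $|\omega|<1$. There the factor $|\omega|^{-(d-1)}$ is singular, because the Cartesian change of variables degenerates at the origin. I would handle it separately, using the compact $b$-support of $\Psi$. Since $\Psi$ is supported in $[-B,B]$, Cauchy--Schwarz gives
$$\sup_\omega|G(w,\omega)|\le\sqrt{2B}\,\norm{\Psi(w,\cdot)}_{L^2}\le\sqrt{2B}\,\norm{\Psi(w,\cdot)}_{H^r},$$
and the same holds for $\partial_\omega G$. Accordingly, I split $\Psi$ in frequency using a cutoff $\chi(\omega)$ with $\chi=1$ near $0$. For the low-frequency part, instead of the substitution, I would bound $\widehat{\eta\mathcal R^*\Psi_{\rm low}}(\xi)$ directly: it is $\int\int_{|\omega|\le2}G\chi\,\widehat\eta(\xi-\omega w)$. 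This is pointwise dominated by $\sup|G|\cdot\sup_{|\omega|\le2}|\widehat\eta(\xi-\omega w)|\lesssim\norm{\Psi}\,\langle\xi\rangle^{-M}$ for any $M$, which lies in every weighted $L^2$ space. The high-frequency part uses the argument above with $|\omega|\ge1$, where $|\omega|^{-(d-1)}\langle\omega\rangle^{d-1}\le 2^{(d-1)/2}$. Combining the two pieces gives the stated estimate with a constant depending on $\eta$, $\psi$ (through $B$ and its derivatives) and $r$.
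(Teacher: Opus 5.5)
Your proof is correct, but it takes a different route from the paper's.

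\textbf{Your route.} You substitute $\xi'=\omega w$, which turns $\widehat{\eta\,\mathcal R^*\Psi}$ into a genuine convolution of $\widehat\eta$ with $|\xi'|^{-(d-1)}H$ on $\R^d$. Peetre and Young ($L^1*L^2\to L^2$) then finish the estimate. This places the whole polar Jacobian on the $L^2$ side, which produces the weight $|\omega|^{-(d-1)}$. That weight is not integrable at $\omega=0$, so you need a low/high frequency split. You dispose of the low part by a pointwise bound with $\langle\xi\rangle^{-M}$ decay.

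\textbf{The paper's route.} The paper avoids the split with a weighted Cauchy--Schwarz (a Schur test) in $(w,\omega)$. It divides the kernel $|\widehat\eta(\xi-\omega w)|$ symmetrically between the two factors. The Jacobian singularity then appears only in $K(\xi)=2\,|\widehat\eta|*\bigl(\langle\cdot\rangle^{-2r}|\cdot|^{-(d-1)}\bigr)$, where $|\zeta|^{-(d-1)}$ is locally integrable on $\R^d$, so $K(\xi)\lesssim\langle\xi\rangle^{-(2r+d-1)}$ follows directly from Peetre. The other factor is integrated in $\xi$ using only the translation invariance of $\|\widehat\eta\|_{L^1}$.

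\textbf{Comparison.} The paper's argument is shorter, treats all frequencies uniformly, and never uses the compact $b$-support. Yours makes the polar-coordinate heuristic exact and isolates the one obstruction, the degeneration of the Jacobian at $\omega=0$. It also shows that the low-frequency piece is infinitely smoothing. This is the same structure as the $a_\pm+\chi_0 m$ amplitude split the paper uses later for the FIO description.

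\textbf{Points of precision.}
\begin{itemize}
\item In the low-frequency bound, $\sup|G|$ must be read as $\int_{\Sph^{d-1}}\sup_\omega|G(w,\omega)|\,d\sigma(w)$, followed by Cauchy--Schwarz in $w$. A supremum over $w$ is not controlled by an $L^2(\Sph^{d-1};\cdot)$ norm.
\item Neither the compact $b$-support nor the bound on $\partial_\omega G$ is needed in the low-frequency part. Cauchy--Schwarz and Plancherel already give $\int_{|\omega|\le2}|G(w,\omega)|\,d\omega\le 2\|G(w,\cdot)\|_{L^2}=2\sqrt{2\pi}\,\|\Psi(w,\cdot)\|_{L^2}$.
\item After returning to polar coordinates, the left side is bounded by twice the stated integral rather than comparable to it. Only this upper bound is used, so nothing breaks.
\end{itemize}
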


\begin{proof}
By density it suffices to treat $\Phi\in C_c^\infty(\Sph^{d-1}\times\R)$. For each
fixed $w$, multiplication by $\psi(w,\cdot)\in C_c^\infty(\R)$ is bounded on
$H^r(\R)$, with operator norm controlled by finitely many derivatives of $\psi$
uniformly in $w$; hence multiplication by $\psi$ is bounded on
$L^2(\Sph^{d-1};H^r(\R))$. We may therefore prove
\eqref{eq:localized-dual-radon-estimate} with right-hand side
$\norm{\psi\Phi}_{L^2(\Sph^{d-1};H^r(\R))}$ and absorb the multiplier constant at
the end. Write $\Psi:=\psi\Phi$ and $G:=\mathcal F_b\Psi$. Fourier inversion in the
second variable gives
\begin{align*}
    \widehat{\eta\,\mathcal R^*\Psi}(\xi)
    =\frac1{2\pi}\int_{\Sph^{d-1}}\int_\R
    \widehat\eta(\xi-\omega w)\,G(w,\omega)\,d\omega\,d\sigma(w).
\end{align*}
Fix $\xi\in\R^d$ and set $\langle\omega\rangle:=(1+|\omega|^2)^{1/2}$. Factoring the
integrand as
$|\widehat\eta(\xi-\omega w)|^{1/2}\langle\omega\rangle^{-r}
\cdot|\widehat\eta(\xi-\omega w)|^{1/2}\langle\omega\rangle^{r}|G(w,\omega)|$
and applying Cauchy--Schwarz in $(w,\omega)$,
\begin{align}
    |\widehat{\eta\,\mathcal R^*\Psi}(\xi)|^2
    \le\frac1{(2\pi)^2}\,K(\xi)\,J(\xi),
    \label{eq:CS-split}
\end{align}
where
\begin{align*}
    K(\xi):&=\int_{\Sph^{d-1}}\int_\R|\widehat\eta(\xi-\omega w)|
\langle\omega\rangle^{-2r}\,d\omega\,d\sigma(w), \\
    J(\xi):&=\int_{\Sph^{d-1}}\int_\R|\widehat\eta(\xi-\omega w)|
\langle\omega\rangle^{2r}|G(w,\omega)|^2\,d\omega\,d\sigma(w).
\end{align*}
Denote $h(\zeta):=\langle\zeta\rangle^{-2r}|\zeta|^{-(d-1)}$, so that
$K=2\,|\widehat\eta|*h$ by polar coordinates~\eqref{eq:antipodal}. Since
$h\notin L^1(\R^d)$ when $r\le\tfrac12$, we read off the decay of $K$ from the
convolution itself rather than from $\norm{h}_{L^1}$. By Peetre's inequality
$\langle\xi\rangle^{2r+d-1}\le C\langle\xi-\zeta\rangle^{2r+d-1}
\langle\zeta\rangle^{2r+d-1}$ together with
$\langle\zeta\rangle^{2r+d-1}h(\zeta)=\langle\zeta\rangle^{d-1}|\zeta|^{-(d-1)}
\le C\bigl(1+|\zeta|^{-(d-1)}\bigr)$, the factor
$\langle\,\cdot\,\rangle^{2r+d-1}|\widehat\eta|\in L^1\cap L^\infty(\R^d)$
convolved against $1+|\zeta|^{-(d-1)}\in L^\infty+L^1_{\mathrm{loc}}$ is bounded
uniformly in $\xi$. Hence $K(\xi)\le C_0\langle\xi\rangle^{-(2r+d-1)}$ for all
$r\ge0$, with $C_0=C_0(\eta,r,d)$. Inserting this into
\eqref{eq:CS-split} gives
$\langle\xi\rangle^{2r+d-1}|\widehat{\eta\,\mathcal R^*\Psi}(\xi)|^2
\le\tfrac{C_0}{(2\pi)^2}J(\xi)$. Integrating in $\xi$, and applying Tonelli's
theorem together with the translation invariance
$\int_{\R^d}|\widehat\eta(\xi-\omega w)|\,d\xi=\norm{\widehat\eta}_{L^1(\R^d)}$
(independent of $(w,\omega)$), we obtain
\begin{align}
    \int_{\R^d}\langle\xi\rangle^{2r+d-1}
    |\widehat{\eta\,\mathcal R^*\Psi}(\xi)|^2\,d\xi
    \le\frac{C_0\norm{\widehat\eta}_{L^1}}{(2\pi)^2}
    \int_{\Sph^{d-1}}\int_\R
    \langle\omega\rangle^{2r}|G(w,\omega)|^2\,d\omega\,d\sigma(w).
    \label{eq:localized-radon-final}
\end{align}
Since $2\bigl(r+\tfrac{d-1}2\bigr)=2r+d-1$, the left-hand side of
\eqref{eq:localized-radon-final} is a constant multiple of
$\norm{\eta\,\mathcal R^*\Psi}_{H^{r+(d-1)/2}(\R^d)}^2$, while by Plancherel in $b$
the right-hand side is a constant multiple of
$\norm{\Psi}_{L^2(\Sph^{d-1};H^r(\R))}^2$. Recalling $\Psi=\psi\Phi$ and the
multiplier bound $\norm{\psi\Phi}_{L^2(\Sph^{d-1};H^r(\R))}\le
C_\psi\norm{\Phi}_{L^2(\Sph^{d-1};H^r(\R))}$ proves
\eqref{eq:localized-dual-radon-estimate}.
\end{proof}

Applying this lemma, we can show the regularity of Bounded-bias ridge integrals.
\begin{proposition}
\label{prop:ridge-integral-regularity}
Let $\Omega\subset\R^d$ be bounded and let $R>R_0$. For $g\in L^2(\MR)$, define
\begin{align*}
    \mathrm{S}^k_R g(x)
    :=\int_{\MR}\sigma_k(w\cdot x-b)g(w,b)\,d\lambda(w,b).
\end{align*}
Then $\mathrm{S}^k_R g\in H^{s_k}(\Omega)$ and
\begin{align}
    \norm{\mathrm{S}^k_R g}_{H^{s_k}(\Omega)}
    \le C\norm{g}_{L^2(\MR)},
    \label{eq:AR-regularity}
\end{align}
where $C$ depends only on $\Omega$, $R$, $d$ and $k$.
\end{proposition}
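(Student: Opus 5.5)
Write $\mathrm{S}^k_R g=\mathcal R^*F$, where
\begin{align*}
    F(w,t):=\int_{-R}^{R}\sigma_k(t-b)\,g(w,b)\,db ,
\end{align*}
and then apply Lemma~\ref{lem:localized-dual-radon} with $r=k+1$. The difficulty is that $F(w,\cdot)$ is not in $H^{k+1}(\R)$: it is a $(k+1)$-fold antiderivative of $g(w,\cdot)\mathbf 1_{[-R,R]}$, so it grows like $t^k$ for $t>R$. On $\Omega$, however, $t=w\cdot x$ only ranges over $[-R_0,R_0]$, so only the values of $F$ on $[-R_0,R_0]$ matter. Fix $R_0<R_1<R$, and choose $\chi\in C_c^\infty(\R)$ with $\chi\equiv1$ on $[-R_1,R_1]$, and $\eta\in C_c^\infty(\R^d)$ with $\eta\equiv1$ on $\Omega$ and $\supp\eta\subset B_{R_1}(0)$. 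Put $\psi(w,t):=\chi(t)$, which lies in $C_c^\infty(\Sph^{d-1}\times\R)$. For $x\in\supp\eta$ and every $w$ we have $|w\cdot x|<R_1$, so $\chi(w\cdot x)=1$. Hence
\begin{align*}
    \eta\,\mathrm{S}^k_R g=\eta\,\mathcal R^*(\psi F)\quad\text{on }\R^d ,
\end{align*}
and $\eta\,\mathrm{S}^k_Rg$ is an extension of $\mathrm{S}^k_Rg|_\Omega$. By the definition of the restriction norm,
\begin{align*}
    \norm{\mathrm{S}^k_R g}_{H^{s_k}(\Omega)}\le\norm{\eta\,\mathcal R^*(\psi F)}_{H^{s_k}(\R^d)} .
\end{align*}

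Lemma~\ref{lem:localized-dual-radon} requires $\Phi\in L^2(\Sph^{d-1};H^{k+1}(\R))$. Since $F$ is not globally in $H^{k+1}$, I will not use $F$ itself as $\Phi$. Instead I choose a second cutoff $\tilde\chi\in C_c^\infty(\R)$ with $\tilde\chi\equiv1$ on $\supp\chi$, and set $\Phi:=\tilde\chi F$. Then $\psi\Phi=\psi F$, so the identity above is unchanged. The main step is the one-dimensional bound
\begin{align*}
    \norm{\tilde\chi F(w,\cdot)}_{H^{k+1}(\R)}\le C\,\norm{g(w,\cdot)}_{L^2(-R,R)} ,
\end{align*}
uniform in $w$. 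I would prove it by Leibniz's rule:
\begin{align*}
    \partial_t^j(\tilde\chi F)=\sum_{\ell\le j}\binom{j}{\ell}\tilde\chi^{(j-\ell)}\,\partial_t^\ell F .
\end{align*}
For $\ell\le k$, $\partial_t^\ell F=\int_{-R}^{R}\sigma_{k-\ell}(t-b)g(w,b)\,db$. On the compact set $\supp\tilde\chi$ the kernel $\sigma_{k-\ell}(t-b)$ is bounded, so Cauchy--Schwarz gives $\sup_{t\in\supp\tilde\chi}|\partial_t^\ell F|\le C\norm{g(w,\cdot)}_{L^2}$. For $\ell=k+1$, $\partial_t^{k+1}F=g(w,\cdot)\mathbf 1_{[-R,R]}$ in $\mathcal D'(\R)$ (this is the $d=1$ warm-up), which is in $L^2$ with the same norm. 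Squaring the one-dimensional bound and integrating over $\Sph^{d-1}$ gives
\begin{align*}
    \norm{\Phi}_{L^2(\Sph^{d-1};H^{k+1}(\R))}\le C\norm{g}_{L^2(\MR)} .
\end{align*}
Joint measurability in $(w,t)$ follows from Fubini, since $g\in L^1(\MR)$ because $\MR$ has finite measure.

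Finally, Lemma~\ref{lem:localized-dual-radon} with $r=k+1$ gives
\begin{align*}
    \norm{\eta\,\mathcal R^*(\psi\Phi)}_{H^{k+1+(d-1)/2}(\R^d)}\le C\norm{\Phi}_{L^2(\Sph^{d-1};H^{k+1}(\R))}\le C\norm{g}_{L^2(\MR)} ,
\end{align*}
and $k+1+(d-1)/2=s_k$, which is \eqref{eq:AR-regularity}. The cutoffs depend only on $R_0$ and $R$, so the constant depends only on $\Omega,R,d,k$.

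I expect the main obstacle to be the bookkeeping that the cutoffs change nothing on $\Omega$ while turning the polynomially growing $F$ into an honest $H^{k+1}$ function. A second point needing care is justifying the identity $\mathrm{S}^k_Rg=\mathcal R^*F$ pointwise and passing Lemma~\ref{lem:localized-dual-radon} from smooth $\Phi$ to this $\Phi$. For that I would argue by density of $C_c^\infty$ in $L^2(\MR)$: both sides depend continuously on $g$, the left side in $L^2(\Omega)$ by the bounded kernel and the right side by the estimate just proved.
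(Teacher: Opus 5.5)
Your proposal is correct and follows the paper's proof of this proposition essentially verbatim: write $\mathrm{S}^k_Rg=\mathcal R^*F$ near $\Omega$, then cut $F$ off in $t$ to obtain an element of $L^2(\Sph^{d-1};H^{k+1}(\R))$ bounded by $\norm{g}_{L^2(\MR)}$ (using $\partial_t^{k+1}F=g$ and the bounded lower-order kernels $\sigma_{k-\ell}$), and finally apply Lemma~\ref{lem:localized-dual-radon} with $r=k+1$. The only difference is the cutoff bookkeeping, which changes nothing: the paper takes $\chi\in C_c^\infty((-R,R))$, uses $\Phi=\chi F$ directly with $\psi\equiv1$ on $\Sph^{d-1}\times\supp\chi$, and needs the identity only on $\Omega$, whereas you add the auxiliary cutoff $\tilde\chi$ and shrink $\supp\eta$ so that the identity holds on all of $\R^d$.
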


\begin{proof}
Choose $\chi\in C_c^\infty((-R,R))$ with $\chi=1$ on $[-R_0,R_0]$. For a.e.
$w\in\Sph^{d-1}$, set
\begin{align*}
    F(w,t):=\int_{-R}^R \sigma_k(t-b)g(w,b)\,db .
\end{align*}
Then, in the sense of distributions on $(-R,R)$,
$\partial_t^{k+1}F(w,\cdot)=g(w,\cdot)$. Moreover, for $0\le\ell\le k$, the kernels
$\partial_t^\ell\sigma_k(t-b)$ are uniformly bounded on
$\supp\chi\times[-R,R]$. Hence
\begin{align*}
    \norm{\chi F(w,\cdot)}_{H^{k+1}(\R)}
    \le C\norm{g(w,\cdot)}_{L^2([-R,R])},
\end{align*}
and integrating in $w$ gives
\begin{align*}
    \norm{\chi F}_{L^2(\Sph^{d-1};H^{k+1}(\R))}
    \le C\norm{g}_{L^2(\MR)}.
\end{align*}
For $x\in\Omega$ one has $|w\cdot x|\le R_0$, hence $\chi(w\cdot x)=1$, so
\begin{align*}
    \mathrm{S}^k_R g(x)=\mathcal R^*(\chi F)(x),
    \qquad x\in\Omega .
\end{align*}
Fix $\eta\in C_c^\infty(\R^d)$ with $\eta\equiv1$ on $\Omega$ and
$\psi\in C_c^\infty(\Sph^{d-1}\times\R)$ with $\psi\equiv1$ on
$\Sph^{d-1}\times\supp\chi$, so that $\psi\,(\chi F)=\chi F$. By
Lemma~\ref{lem:localized-dual-radon} with $r=k+1$ and
$s_k=r+\tfrac{d-1}{2}$,
\begin{align*}
    \norm{\mathrm{S}^k_R g}_{H^{s_k}(\Omega)}
    &\le \norm{\eta\,\mathcal R^*(\psi\,\chi F)}_{H^{s_k}(\R^d)}
    \le C\norm{\chi F}_{L^2(\Sph^{d-1};H^{k+1}(\R))}
    \nonumber\\
    &\le C\norm{g}_{L^2(\MR)},
\end{align*}
which proves \eqref{eq:AR-regularity}.
\end{proof}

\begin{remark}
    It is natural to ask whether a reverse inequality holds for
    Proposition~\ref{prop:ridge-integral-regularity}. In general this is false, due to
    the nontrivial null space of the localized ridge integral
    operator~\cite{unser_RidgesNeuralNetworks_2023,ludwig_radontransformeuclidean_1966}, even if
    $\Omega$ is replaced by the full ball $B_R(0)$. One can construct
    a large class of counterexamples by spherical harmonics in general dimensions.

    For example, take $d=2$ and $k=1$, and write
    $w=(\cos\varphi,\sin\varphi)$. Set
    \begin{align*}
        g(\varphi,b):=\cos(3\varphi)\mathbf 1_{[-R,R]}(b).
    \end{align*}
    For $x\in B_R(0)$, let
    $t=w\cdot x=x_1\cos\varphi+x_2\sin\varphi$. Then
    \begin{align*}
        \int_{-R}^R (t-b)_+\,db
        =\int_{-R}^t(t-b)\,db
        =\frac12(t+R)^2 .
    \end{align*}
    As a function of $\varphi$, $(t+R)^2$ is a trigonometric polynomial with
    angular frequencies at most $2$. It is therefore orthogonal to
    $\cos(3\varphi)$ in $L^2([0,2\pi])$, and hence
    $\mathrm{S}^k_R g(x)=0$ for every $x\in B_R(0)$, although
    $g\not\equiv0$.
\end{remark}

The polynomial part in Proposition~\ref{prop:density-to-function} can be
realized by $\mathrm{S}^k_R$. This lemma is elementary so we postpone the proof
to the appendix~\ref{app:proof-poly-realization}.

\begin{lemma}
\label{lem:polynomial-lp-density}
Let $1\le p<\infty$, let $\Omega\subset\R^d$ be bounded.
  For every $q\in\mathcal P_k(\R^d)$ there
exists $g_q\in L^p(\MR)$, supported in $\Sph^{d-1}\times[-R,-R_0]$, such that
\begin{align*}
    \mathrm{S}^k_R g_q=q\quad\text{on }\Omega,
    \qquad
    \norm{g_q}_{L^p(\MR)}\le C\norm{q}_{L^p(\Omega)},
\end{align*}
with $C$ depending only on $\Omega$, $R$, $d$, $k$, and $p$.
\end{lemma}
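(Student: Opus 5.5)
The plan is to show that ridge integrals with bias supported in $[-R,-R_0]$ already span all of $\mathcal P_k$ on $\Omega$. Finite dimensionality of $\mathcal P_k$ then does the rest. The key observation is that for $b\le -R_0$ and $x\in\Omega$ we have $w\cdot x-b\ge -R_0-b\ge0$. Hence the positive part is inactive, and
\begin{align*}
    \sigma_k(w\cdot x-b)=\frac{(w\cdot x-b)^k}{k!}
\end{align*}
is an honest polynomial of degree $\le k$ in $x$. Every $g$ supported in $\Sph^{d-1}\times[-R,-R_0]$ therefore produces $\mathrm S^k_R g\in\mathcal P_k$ on $\Omega$. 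The map $T:g\mapsto \mathrm S^k_R g|_\Omega$ from $L^p(\Sph^{d-1}\times[-R,-R_0])$ into $\mathcal P_k$ is linear and bounded: its kernel is bounded on the compact set $\Omega\times\Sph^{d-1}\times[-R,-R_0]$, and $L^p\subset L^1$ on a finite measure space.

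The main step is surjectivity of $T$ onto $\mathcal P_k$. I would argue by duality. If a linear functional $\ell$ on $\mathcal P_k$ annihilates the range, then
\begin{align*}
    \ell\bigl((w\cdot x-b)^k\bigr)=0
\end{align*}
for all $w\in\Sph^{d-1}$ and $b\in(-R,-R_0)$. This expression is a polynomial in $b$ that vanishes on an interval, so all of its coefficients vanish. Thus $\ell\bigl((w\cdot x)^j\bigr)=0$ for every $j\le k$ and every $w$. By homogeneity this extends to all $w\in\R^d$.

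It remains to recall that powers of linear forms $(w\cdot x)^j$ span the homogeneous polynomials of degree $j$. This follows from polarization: differentiating $(w\cdot x)^j$ in $w$ yields every monomial. Hence $\ell=0$, and $T$ is onto. This spanning argument is the only non-routine point, and the polarization fact is classical.

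Now fix a basis $q_1,\dots,q_Q$ of $\mathcal P_k$, with $Q=Q_{d,k}$, and choose preimages $g_1,\dots,g_Q$. These may even be taken continuous or smooth, since $T$ restricted to a dense subspace is still onto the finite-dimensional range. For $q=\sum_i c_i q_i$, set $g_q:=\sum_i c_i g_i$; this defines a linear map $q\mapsto g_q$. Then
\begin{align*}
    \norm{g_q}_{L^p(\MR)}\le \max_i\norm{g_i}_{L^p}\sum_i|c_i|\le C\norm{q}_{L^p(\Omega)}.
\end{align*}
The last inequality uses equivalence of norms on the finite-dimensional space $\mathcal P_k$. Here $\norm{\cdot}_{L^p(\Omega)}$ is a genuine norm on $\mathcal P_k$ because $\Omega$ is open and nonempty, so a polynomial vanishing on $\Omega$ is zero. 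The resulting constant depends only on $\Omega,R,d,k,p$, as required.
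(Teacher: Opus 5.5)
Your proof is correct, but it gets surjectivity by duality, whereas the paper builds $g_q$ explicitly.

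The paper expands $\sigma_k(w\cdot x-b)=(w\cdot x-b)^k/k!$ binomially. This gives $\mathrm{S}^k_R g=\sum_{i\le k}\int_{\Sph^{d-1}}a_i(w)(w\cdot x)^i\,d\sigma(w)$, where $a_i$ is proportional to the $(k-i)$-th $b$-moment of $g(w,\cdot)$ on $[-R,-R_0]$. It then solves two finite linear systems:
\begin{itemize}
\item It picks $a_i$ in the span of $\{w^\alpha\}_{|\alpha|=i}$ with prescribed moments. This uses invertibility of their Gram matrix on $\Sph^{d-1}$, i.e.\ a homogeneous polynomial vanishing on the sphere is zero.
\item It then picks $g(w,\cdot)$ in the span of $\{(-b)^m\}_{m\le k}$ with prescribed $b$-moments on $[-R,-R_0]$.
\end{itemize}

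Your annihilator argument is the transpose of these two steps. ``A polynomial in $b$ vanishing on an interval has zero coefficients'' replaces the moment problem in $b$. Your spanning fact for $(w\cdot x)^j$ replaces the Gram-matrix step. The cleanest way to finish that step is to note that $w\mapsto\ell\bigl((w\cdot x)^j\bigr)=\sum_{|\alpha|=j}\binom{j}{\alpha}w^\alpha\,\ell(x^\alpha)$ is a homogeneous polynomial vanishing on $\Sph^{d-1}$. Hence every $\ell(x^\alpha)=0$, which is exactly the fact the paper uses.

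What each route buys: the paper's gives an explicit linear map $q\mapsto g_q$, with $g_q$ a polynomial in $(w,b)$ on its support. Yours is shorter and avoids the bookkeeping, but it produces preimages of a basis only by existence.

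One step you leave implicit should be written out: passing from ``$\ell$ annihilates the range of $T$'' to pointwise vanishing of $(w,b)\mapsto\ell\bigl((w\cdot x-b)^k\bigr)$. This follows by moving $\ell$, a finite combination of coefficient functionals, inside the integral. Then use that this function is continuous in $(w,b)$ and is tested against arbitrary $g\in L^p$.
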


Let $\Omega\subset\R^d$ be bounded and admit a bounded extension operator in
$H^{s_k}$, and choose $R>R_0$. Now we can synthesize
the entire Sobolev space $H^{s_k}(\Omega)$ as the image of $L^2(\MR)$ under $\mathrm{S}^k_R$.

\begin{theorem}\label{thm:bounded-bias-decomposition}
The ridge integral $\mathrm{S}^k_R:L^2(\MR)\to H^{s_k}(\Omega)$ is a surjective
bounded linear operator.
\end{theorem}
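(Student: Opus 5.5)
Boundedness is exactly Proposition~\ref{prop:ridge-integral-regularity}, so the work is surjectivity. Given $u\in H^{s_k}(\Omega)$, I use the bounded extension operator to get $u_e\in H^{s_k}(\R^d)$ with $u_e|_\Omega=u$ and $\norm{u_e}_{H^{s_k}(\R^d)}\le C\norm{u}_{H^{s_k}(\Omega)}$. Then I fix a cutoff $\eta\in C_c^\infty(B_{A}(0))$ with $\eta\equiv1$ on $\Omega$, where $R_0<A<R$. The function $v:=\eta u_e$ is compactly supported in $B_A$, lies in $H^{s_k}(\R^d)$, and agrees with $u$ on $\Omega$. Here it is important to take the support radius $A<R$, not merely $A\ge R_0$; this is possible since $R>R_0$.

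Next I apply Proposition~\ref{prop:density-to-function} to $v$. This gives $v=\mathrm{S}^k g_v+p$ pointwise, with $g_v=\mathrm{A}^kv\in L^2(\Sph^{d-1}\times\R)$, and Theorem~\ref{thm:density-sobolev} gives $\norm{g_v}_{L^2}\le \sqrt{c_d}\,|v|_{\dot H^{s_k}}$. The difficulty is that $g_v$ is not supported in $|b|\le R$, since the ramp filter $\Lambda^{d-1}$ is nonlocal for even $d$.

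I handle the tail by splitting $g_v=g_v\mathbf 1_{|b|\le R}+g_v\mathbf 1_{|b|>R}$. For $x\in\Omega$ we have $|w\cdot x|\le R_0<R$, so the two pieces of the tail behave simply:
\begin{itemize}
\item For $b>R$, $\sigma_k(w\cdot x-b)=0$, so that part contributes nothing on $\Omega$.
\item For $b<-R$, $\sigma_k(w\cdot x-b)=(w\cdot x-b)^k/k!$ is a polynomial of degree $\le k$ in $x$. Its integral against $g_v$ converges by Lemma~\ref{lem:canonical-density-weighted-L1} and gives a polynomial $q_1\in\Pcal_k$.
\end{itemize}
Hence on $\Omega$,
\[
u=\mathrm{S}^k_R(g_v\mathbf 1_{\MR})+q, \qquad q:=q_1+p\in\Pcal_k.
\]
Lemma~\ref{lem:polynomial-lp-density} with $p=2$ then gives $g_q\in L^2(\MR)$ with $\mathrm{S}^k_Rg_q=q$ on $\Omega$. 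So $g:=g_v\mathbf 1_{\MR}+g_q$ satisfies $\mathrm{S}^k_R g=u$, which proves surjectivity.

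The main obstacle is quantitative control of $q$; it is not needed for bare surjectivity but is needed for the norm equivalence in Corollary~\ref{cor:quotient-norm-equiv-sobolev}. I would write $q=u-\mathrm{S}^k_R(g_v\mathbf 1_{\MR})$ on $\Omega$. Proposition~\ref{prop:ridge-integral-regularity} bounds the second term in $L^2(\Omega)$ by $C\norm{g_v}_{L^2}$. Therefore $\norm{q}_{L^2(\Omega)}\le C\norm{u}_{H^{s_k}(\Omega)}$, and Lemma~\ref{lem:polynomial-lp-density} turns this into $\norm{g_q}_{L^2}\le C\norm{u}_{H^{s_k}(\Omega)}$. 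This avoids estimating the weighted $L^1$ tail directly, and yields $\norm{g}_{L^2(\MR)}\le C\norm{u}_{H^{s_k}(\Omega)}$.
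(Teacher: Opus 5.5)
Your proof is correct and follows the paper's argument essentially step for step: extend $u$, apply Proposition~\ref{prop:density-to-function}, truncate the canonical density to $\MR$ (this piece is in $L^2$ by Theorem~\ref{thm:density-sobolev}), note that the tail $|b|>R$ gives a polynomial of degree $\le k$ on $\Omega$, and absorb all polynomials via Lemma~\ref{lem:polynomial-lp-density}. Two small remarks: restricting the support radius to $A<R$ is harmless but never used, since the tail argument needs only $x\in\Omega$ and $|b|>R>R_0$. Also, your explicit bound on $\norm{q}_{L^2(\Omega)}$ is a valid alternative to the open mapping theorem, which is what the paper uses in Corollary~\ref{cor:quotient-norm-equiv-sobolev}, so that bound is not actually needed there.
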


\begin{proof}
Boundedness is Proposition~\ref{prop:ridge-integral-regularity}:
$\norm{\mathrm{S}^k_R g}_{H^{s_k}(\Omega)}\le C\norm{g}_{L^2(\MR)}$, and linearity
is immediate.

For surjectivity, let $u\in H^{s_k}(\Omega)$. Since $\Omega$ admits a bounded
extension operator, choose a compactly supported $u_e\in H^{s_k}(\R^d)$ with
$u_e|_\Omega=u$ and $\norm{u_e}_{H^{s_k}(\R^d)}\le C\norm{u}_{H^{s_k}(\Omega)}$.
By Proposition~\ref{prop:density-to-function},
$u_e=q_u+\mathrm{S}^k\widetilde g_u$ in $\R^d$, where $q_u\in\mathcal P_k(\R^d)$ and
$\widetilde g_u$ is the canonical density of $u_e$. Split
$\widetilde g_u=g_u+h_u$ with $g_u:=\widetilde g_u\mathbf 1_{\MR}$ and
$h_u:=\widetilde g_u-g_u$; by Theorem~\ref{thm:density-sobolev},
$g_u\in L^2(\MR)$. For $x\in\Omega$ the tail parameter obeys $\abs b>R>\abs x$,
so $\sigma_k(w\cdot x-b)$ is a polynomial of degree $\le k$ in $x$ and
$r_u:=(\mathrm{S}^k h_u)|_\Omega\in\mathcal P_k(\R^d)$. Restricting the synthesis to
$\Omega$,
\begin{align*}
    u=(q_u+r_u)+\mathrm{S}^k_R g_u\quad\text{on }\Omega,
    \qquad q_u+r_u\in\mathcal P_k(\R^d).
\end{align*}
By Lemma~\ref{lem:polynomial-lp-density} there is $g_\star\in L^2(\MR)$ with
$\mathrm{S}^k_R g_\star=q_u+r_u$ on $\Omega$. Hence
$u=\mathrm{S}^k_R(g_u+g_\star)$ with $g_u+g_\star\in L^2(\MR)$, proving
surjectivity.
\end{proof}

\begin{corollary}\label{cor:quotient-norm-equiv-sobolev}
$\mathrm{S}^k_R$ induces a topological isomorphism between $L^2(\MR)/\ker\mathrm{S}^k_R$
and $H^{s_k}(\Omega)$; equivalently
\begin{align}\label{eq:quotient-norm-equiv-sobolev}
    \norm{u}_{H^{s_k}(\Omega)}
    \simeq\inf\big\{\norm{g}_{L^2(\MR)}:g\in L^2(\MR),\ \mathrm{S}^k_R g=u\text{ on }\Omega\big\}.
\end{align}
\end{corollary}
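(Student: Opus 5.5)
The plan is to derive the corollary from Theorem~\ref{thm:bounded-bias-decomposition} via the open mapping theorem, and also to give a direct constructive bound, since the surjectivity proof already carries quantitative estimates.

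\emph{Abstract route.} By Theorem~\ref{thm:bounded-bias-decomposition}, $\mathrm{S}^k_R:L^2(\MR)\to H^{s_k}(\Omega)$ is bounded, linear and surjective between Banach spaces. Its kernel is closed, so $L^2(\MR)/\ker\mathrm{S}^k_R$ is a Banach space. The induced map $\widetilde{\mathrm S}:[g]\mapsto \mathrm{S}^k_R g$ is a continuous linear bijection onto $H^{s_k}(\Omega)$, and the open mapping theorem makes its inverse bounded. The quotient norm of $[g]$ is exactly the infimum on the right of \eqref{eq:quotient-norm-equiv-sobolev}. The bound ``$\lesssim$'' in one direction is then Proposition~\ref{prop:ridge-integral-regularity} applied to any representing $g$, followed by taking the infimum. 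The reverse bound is the boundedness of $\widetilde{\mathrm S}^{-1}$.

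\emph{Constructive route.} To get explicit constants, I would redo the surjectivity argument with estimates. First extend $u$ to a compactly supported $u_e$ with $\norm{u_e}_{H^{s_k}}\le C\norm{u}_{H^{s_k}(\Omega)}$; compact support is arranged by multiplying by a fixed cutoff. Theorem~\ref{thm:density-sobolev} then gives
\[
\norm{g_u}_{L^2(\MR)}\le\norm{\widetilde g_u}_{L^2}=\sqrt{c_d}\,|u_e|_{\dot H^{s_k}}.
\]
For the polynomial correction, Lemma~\ref{lem:polynomial-lp-density} bounds $\norm{g_\star}_{L^2}$ by $\norm{q_u+r_u}_{L^2(\Omega)}$. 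Since $q_u+r_u=u-\mathrm{S}^k_R g_u$ on $\Omega$, this is at most
\[
\norm{u}_{L^2(\Omega)}+C\norm{g_u}_{L^2(\MR)},
\]
using Proposition~\ref{prop:ridge-integral-regularity} once more. Summing the two pieces gives
\[
\inf\norm{g}_{L^2(\MR)}\le C\norm{u}_{H^{s_k}(\Omega)}.
\]

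\emph{Main obstacle.} The only delicate point is controlling the polynomial part $q_u+r_u$. Its size is not read off directly from the construction; it has to be bounded through the identity above, and that is the key step. The rest is bookkeeping, so the abstract route alone already suffices for the stated equivalence.
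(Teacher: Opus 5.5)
Your abstract route is exactly the paper's proof: boundedness and surjectivity from Theorem~\ref{thm:bounded-bias-decomposition}, closedness of the kernel, and the open mapping theorem applied to the induced bijection on $L^2(\MR)/\ker\mathrm{S}^k_R$, whose norm is the infimum in \eqref{eq:quotient-norm-equiv-sobolev}. Your additional constructive bound is also correct. It controls the polynomial part through $q_u+r_u=u-\mathrm{S}^k_R g_u$ on $\Omega$, which is the same bookkeeping the paper later uses for the lower embedding in Theorem~\ref{the:radon-lp-sobolev-embedding}.
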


\begin{proof}
By Theorem~\ref{thm:bounded-bias-decomposition}, $\mathrm{S}^k_R$ is a bounded
linear surjection between Banach spaces with closed kernel, so it factors as a
continuous bijection $L^2(\MR)/\ker\mathrm{S}^k_R\to H^{s_k}(\Omega)$, whose
inverse is continuous by the open mapping theorem. The right-hand side of
\eqref{eq:quotient-norm-equiv-sobolev} is exactly the quotient norm.
\end{proof}

\section{\texorpdfstring{$L^p$}{Lp} theory and Sobolev sandwich embedding}
\label{sec:radon-lp-theory}

The exact isometry of Theorem~\ref{thm:density-sobolev} is special to the Hilbert
case: it rests on Plancherel, which has no $L^p$ analogue. The mechanism behind
it is the regularity of the Radon transform
and its dual which survives for $p\neq2$ once we measure it
correctly. The right tool is the local $L^p$ Sobolev regularity of $\mathcal R$
and $\mathcal R^*$ as Fourier integral operators, available precisely on the
open range $1<p<\infty$ and carrying the sharp Seeger--Sogge--Stein loss
$\delta_p$. The exact equality of $p=2$ then degrades by $2\delta_p$ into
the two-sided Sobolev sandwich of
Theorem~\ref{the:radon-lp-sobolev-embedding}. Throughout this section
$1<p<\infty$, and $W^{s,p}$ denotes the Bessel-potential Sobolev space.
The sharp $L^p$ regularity of the Radon transform
(Theorem~\ref{thm:sss-fio-lp}) carries the Seeger--Sogge--Stein loss
\begin{align}
    \delta_p:=(d-1)\left|\frac1p-\frac12\right|,
    \qquad
    \rho_p:=\frac{d-1}{2}-\delta_p
    =(d-1)\min\!\left(\frac1p,\,1-\frac1p\right)>0,
    \label{eq:deltap-rhop-def}
\end{align}
so that $\mathcal R$ and $\mathcal R^*$ gain exactly $\rho_p$ derivatives in $L^p$
(Lemma~\ref{lem:radon-lp-mapping}).  Accordingly, set
\begin{align*}
    s_{k,p}:=s_k+\delta_p=k+\frac{d+1}{2}+\delta_p,
    \qquad
    r_{k,p}:=s_k-\delta_p=k+1+\rho_p.
\end{align*}
Thus $s_{k,2}=r_{k,2}=s_k$, while $s_{k,p}-r_{k,p}=2\delta_p\ge0$ and
$r_{k,p}\ge k+1>0$ for every $p$.  For $p\ge2$ these reduce to
$s_{k,p}=k+d-\tfrac{d-1}{p}$ and $r_{k,p}=k+1+\tfrac{d-1}{p}$, whereas for
$1<p\le2$ one has $\rho_p=\tfrac{d-1}{p'}$, $p'=\tfrac{p}{p-1}$.  The exponent
$s_{k,p}$ controls the analysis map $u\mapsto\mathrm{A}^k  u$
(Theorem~\ref{thm:canonical-density-lp-fio}); the exponent $r_{k,p}$ controls the
synthesis map $g\mapsto\mathrm{S}^k_R g$
(Proposition~\ref{prop:ridge-integral-regularity-lp}).

We study the $L^p$ counterpart of the Radon-domain
BV space $\mathcal{R}\mathrm{BV}^k$
of~\cite{parhi_banachspacerepresenter_2021,parhi_whatkindsfunctions_2022}, the
measure norm being replaced by an $L^p$ norm of the density.
\begin{definition}
\label{def:radon-lp-representation-space}
Fix a bounded open set $\Omega\subset\R^d$ and
$R>R_0$.  For $1<p<\infty$, the \emph{Radon-domain $L^p$
space of order $k$} on $\Omega$ for the $\sigma_k$ activation is
\begin{align*}
    \mathcal{R}L^p_k(\Omega)
    :=\mathrm{S}^k_R\bigl(L^p(\MR)\bigr)\subset L^p(\Omega),
\end{align*}
equipped with the quotient norm
\begin{align}
    \norm{u}_{\mathcal{R}L^p_k(\Omega)}
    :=\inf\left\{
    \norm{g}_{L^p(\MR)}:
    g\in L^p(\MR),\ \mathrm{S}^k_R g=u\text{ on }\Omega
    \right\}.
    \label{eq:radon-lp-quotient-norm}
\end{align}
\end{definition}
The map $\mathrm{S}^k_R:L^p(\MR)\to L^p(\Omega)$ is bounded, by H\"older's
inequality on the finite-measure window $\MR$ and the boundedness of the kernel
$\sigma_k(w\cdot x-b)$ on the compact set $\overline\Omega\times\MR$
(see \eqref{eq:AR-elementary-Lp} below).  Hence $\ker\mathrm{S}^k_R$ is a
\emph{closed} subspace of $L^p(\MR)$, and the quotient norm
\eqref{eq:radon-lp-quotient-norm} endows $\mathcal{R}L^p_k(\Omega)$ with the
natural Banach-space structure: it is isometrically isomorphic to the quotient
$L^p(\MR)/\ker\mathrm{S}^k_R$.  Since $L^p(\MR)$ is separable and, for
$1<p<\infty$, reflexive, the space $\mathcal{R}L^p_k(\Omega)$ is separable and
reflexive as well~\cite{lax2014functional}.

The definition is independent of the choice of $R>R_0$: enlarging $R$ adds only
densities supported at bias $|b|>R_0$, which contribute polynomials of degree
$\le k$ on $\Omega$, and these already lie in $\mathcal{R}L^p_k(\Omega)$ by
Lemma~\ref{lem:polynomial-lp-density}.  When $p=2$, we have proved in
Theorem~\ref{thm:bounded-bias-decomposition} and
Corollary~\ref{cor:quotient-norm-equiv-sobolev} that
$\mathcal{R}L^2_k(\Omega)=H^{s_k}(\Omega)$ with equivalent norms.

\subsection{\texorpdfstring{$L^p$}{Lp} regularity for the Radon transform}
The Hilbertian computation behind Theorem~\ref{thm:density-sobolev} no longer
applies when $p\neq2$, since it rests on Plancherel. What survives is the
\emph{regularity} of the Radon transform and its dual, now read off the
$L^p$-Sobolev scale. The precise tool is the local $L^p$-Sobolev theory of Fourier
integral operators
(FIOs)~\cite{hormander_LagrangianDistributionsFourier_2009,%
sogge_FourierIntegralsClassical_2017,stein_harmonicanalysis_1993}, whose sharp form
is the Seeger--Sogge--Stein theorem and whose loss is $\delta_p$.

\begin{theorem}
\label{thm:sss-fio-lp}
Let $X$ and $Y$ be smooth $d$-dimensional manifolds, and let
$T\in I^\mu(Y,X;\mathcal{C})$ be a properly supported Fourier integral operator whose
canonical relation is locally the graph of a homogeneous canonical transformation.
Then, for every $1<p<\infty$, every $s\in\R$, and every pair of compactly
supported cutoffs $\chi_X,\chi_Y$,
\begin{align*}
    \norm{\chi_YT\chi_X f}_{W^{s-\mu-\delta_p,p}(Y)}
    \le C\,\norm{f}_{W^{s,p}(X)},
    \qquad
    \delta_p:=(d-1)\abs{\frac1p-\frac12}.
\end{align*}
Let $\Pi_{Y\times X}:\mathcal{C}\longrightarrow Y\times X, \,
    (y,\eta;x,\xi)\longmapsto (y,x),$
be the natural projection of the canonical relation to base variables. If $T$ is
elliptic and $d\Pi_{Y\times X}$ has maximal rank $2d-1$ somewhere, then
the loss $\delta_p$ cannot be improved.
\end{theorem}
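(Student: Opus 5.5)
The statement is the Seeger--Sogge--Stein theorem, quoted as background. I will not re-derive it from scratch. The plan is to assemble it from the standard FIO machinery and to record where each ingredient comes from; the postponed Appendix~\ref{app:fio-background} is the natural home for this.

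\emph{Reduction to order zero.} Composing with the Bessel potentials $(I-\Delta)^{(s-\mu)/2}$ on $Y$ and $(I-\Delta)^{-s/2}$ on $X$ (after inserting further cutoffs) produces an operator of the form $\chi_Y' T' \chi_X'$. By the composition calculus for pseudodifferential operators with FIOs, $T'\in I^{0}(Y,X;\mathcal C)$ with the same canonical relation, modulo smoothing errors. These errors are harmless because the cutoffs are compactly supported. The claim thus reduces to $L^p$ boundedness of a compactly supported order $-\delta_p$ FIO whose canonical relation is locally a canonical graph.

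\emph{Endpoint $H^1\to L^1$ estimate.} By a partition of unity and local coordinates I would write the operator as a finite sum of oscillatory integrals $\int e^{i\varphi(y,\xi)}a(y,\xi)\hat f(\xi)\,d\xi$. Here $\varphi$ is homogeneous of degree one and satisfies $\det\partial^2_{y\xi}\varphi\ne0$ (the canonical graph condition), and $a\in S^{-(d-1)/2}$. For the endpoint I follow Seeger--Sogge--Stein. Decompose dyadically in $|\xi|\sim2^j$. Further split each dyadic shell into about $2^{j(d-1)/2}$ angular sectors of aperture $2^{-j/2}$. On each sector, linearize the phase, $\varphi(y,\xi)\approx\nabla_\xi\varphi(y,\xi_\nu)\cdot\xi$. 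The resulting kernels then satisfy good decay off $2^{-j}\times2^{-j/2}$ plates. Summing over sectors with Cauchy--Schwarz gives $H^1\to L^1$ boundedness for order $-(d-1)/2$.

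\emph{Interpolation and duality.} Order $0$ gives $L^2$ boundedness by H\"ormander's $L^2$ theorem, which applies because the canonical relation is a local graph. Complex (Stein) interpolation along the analytic family $T(I-\Delta)^{z}$ between the $H^1\to L^1$ endpoint at order $-(d-1)/2$ and the $L^2$ estimate at order $0$ gives $L^p$ boundedness at order $-\delta_p$ for $1<p\le2$. The adjoint $T^*$ is again an FIO associated to $\mathcal C^{-1}$, which is also a canonical graph. Duality therefore covers $2\le p<\infty$.

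\emph{Sharpness.} I would exhibit test functions concentrated near a point where $d\Pi_{Y\times X}$ has rank $2d-1$, i.e.\ where the projection onto base variables drops rank by exactly one, as for a single-layer conormal situation. Take $f$ to be a bump smoothed at scale $2^{-j}$; ellipticity lets one compute $Tf$ asymptotically. The output is concentrated on a hypersurface-type set with $L^p$ norm scaling like $2^{j(\mu+\delta_p)}$ relative to the input. For $p>2$, the dual example uses a function conormal to a hypersurface that the operator focuses to a point. This forces $\delta_p$ to be optimal.

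The main obstacle is the $H^1\to L^1$ endpoint. The second dyadic decomposition and its kernel estimates are the genuinely technical part. In the paper's context this is exactly the content one cites from \cite{seeger1991regularity}, rather than reproves.
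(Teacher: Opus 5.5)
Your proposal is correct and matches the paper, which also proves this theorem entirely by citation: Seeger--Sogge--Stein (Corollary~2.4 there, together with the books of Stein and Sogge) for the estimate, and Seeger--Sogge--Stein and Ruzhansky for sharpness under ellipticity and maximal rank. Your outline of the cited argument is accurate: reduction to order zero, the $H^1\to L^1$ endpoint at order $-(d-1)/2$ via the second dyadic decomposition, the $L^2$ bound at order $0$, interpolation and duality, and the bump and conormal extremizers for $p<2$ and $p>2$, which are essentially the wave-packet examples the paper mentions in its sharpness appendix.
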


\begin{proof}
The estimate is the local $L^p$--Sobolev regularity theorem of
Seeger--Sogge--Stein~\cite[Corollary~2.4]{seeger1991regularity}; see also
\cite[Ch.~IX, Sec.~4]{stein_harmonicanalysis_1993} and
\cite[Ch.~6]{sogge_FourierIntegralsClassical_2017}.  The sharpness statement is
the corresponding elliptic maximal-rank sharpness result in
\cite{seeger1991regularity, ruzhansky_SharpnessSeegerSoggeSteinOrders_1999}.
\end{proof}

A single microlocal fact lets us apply Theorem~\ref{thm:sss-fio-lp} here: $\mathcal
R$ and $\mathcal R^*$ are FIOs of order $-(d-1)/2$ whose canonical relations, after
a conic cutoff separating the two antipodal branches, are local canonical graphs.
We state this and defer the verification together with a review of
the FIO preliminary to Appendix~\ref{app:fio-background}.

\begin{lemma}
\label{lem:radon-fio}
Let $d\ge2$ and $Y:=\Sph^{d-1}\times\R$, with $\R^d$ and $Y$ both of dimension $d$.
The Radon transform $\mathcal R:C_c^\infty(\R^d)\to\mathcal D'(Y)$ and its dual
$\mathcal R^*:C_c^\infty(Y)\to\mathcal D'(\R^d)$ are elliptic Fourier integral
operators of H\"ormander order $-(d-1)/2$, with canonical relations $C_{\mathcal R}$
and $C_{\mathcal R}^t$. Each relation has two antipodal branches, and a conic cutoff
removing one branch turns it into a local canonical graph. Consequently, for all
cutoffs $\chi_X\in C_c^\infty(\R^d)$ and $\chi_Y\in C_c^\infty(Y)$, the localized
operators $\chi_Y\mathcal R\chi_X$ and $\chi_X\mathcal R^*\chi_Y$ satisfy hypotheses of Theorem~\ref{thm:sss-fio-lp} with $\mu=-(d-1)/2$, so the
loss there specializes to $\delta_p=(d-1)\abs{1/p-1/2}$.
\end{lemma}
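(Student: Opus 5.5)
The plan is to write the Radon transform as an explicit oscillatory integral and read off its order, phase, canonical relation and the rank of its projection. The dual is then handled by taking transposes.

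\emph{Phase and order.} Represent the delta function by its Fourier integral, $\delta(b-w\cdot x)=\frac1{2\pi}\int_\R e^{i\omega(b-w\cdot x)}\,d\omega$. This gives
\begin{align*}
    \mathcal Rf(w,b)=\frac1{2\pi}\int_{\R^d}\int_\R e^{i\omega(b-w\cdot x)}f(x)\,d\omega\,dx,
\end{align*}
with phase $\phi(w,b,x,\omega)=\omega(b-w\cdot x)$ and amplitude $a\equiv\frac1{2\pi}$. The phase has a single frequency variable ($N=1$). The amplitude is a symbol of order $0$, so the H\"ormander order is
\begin{align*}
    \mu=0+\tfrac N2-\tfrac{\dim X+\dim Y}{4}=\tfrac12-\tfrac d2=-\tfrac{d-1}{2}.
\end{align*}
The phase is nondegenerate: $d_{w,b,x,\omega}\partial_\omega\phi=d(b-w\cdot x)\neq0$, because the $db$-component equals $1$. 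Ellipticity is immediate, since the amplitude is a nonzero constant. Near $\omega=0$ we insert a cutoff; the part near $\omega=0$ is smoothing and can be discarded modulo $C^\infty$ kernels.

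\emph{Canonical relation and the local-graph property.} The critical set is $\{b=w\cdot x\}$. On it,
\begin{align*}
    C_{\mathcal R}=\{(w,\,w\cdot x,\,-\omega\,\Pi_w x,\,\omega;\;x,\,\omega w):\ \omega\neq0\},
\end{align*}
where $\Pi_w x$ denotes the projection of $x$ onto $T_w\Sph^{d-1}$. To get the component in $T^*X$ we use the sign convention $\xi=-d_x\phi=\omega w$, which matches the slice theorem~\eqref{eq:Fourier-slice}. The projection $C_{\mathcal R}\to T^*X\setminus0$, $(x,\omega)\mapsto(x,\xi)$, has exactly two preimages of each $(x,\xi)$: namely $w=\pm\xi/|\xi|$ and $\omega=\pm|\xi|$. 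These two preimages are the antipodal branches, interchanged by $(w,b,\omega)\mapsto(-w,-b,-\omega)$.

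We insert a conic cutoff $\chi_+(\omega)$ supported in $\omega>0$ (resp.\ $\omega<0$). On each branch the map $(x,\xi)\mapsto(w,b,\eta_w,\omega)=(\xi/|\xi|,\,x\cdot\xi/|\xi|,\,-|\xi|\Pi_{\xi/|\xi|}x,\,|\xi|)$ is smooth and homogeneous of degree one in the fibre variables. It is injective, and its inverse is $x=bw-\eta_w/\omega$, $\xi=\omega w$. Hence each branch is the graph of a homogeneous canonical transformation, and the projections to $T^*Y$ and $T^*X$ are local diffeomorphisms. The operator splits as $\mathcal R=\mathcal R_++\mathcal R_-$, and each piece satisfies the hypotheses of Theorem~\ref{thm:sss-fio-lp}. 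The loss estimate then holds for each piece, and hence for the sum.

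\emph{Dual operator and maximal rank.} The kernel of $\mathcal R^*$ is the transpose kernel $\delta(b-w\cdot x)$, viewed on $X\times Y$. Therefore $\mathcal R^*\in I^{-(d-1)/2}(X,Y;C_{\mathcal R}^t)$, where $C^t_{\mathcal R}$ is the inverse relation. The same branch decomposition applies, and inverses of canonical graphs are again canonical graphs.

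For the sharpness clause, the image of $\Pi_{Y\times X}$ is the incidence hypersurface $\{b=w\cdot x\}$, which has dimension $2d-1$. The map $(w,x,\omega)\mapsto(w,w\cdot x,x)$ is a submersion onto this hypersurface, so $d\Pi_{Y\times X}$ has rank $2d-1$ everywhere.

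Finally we localize. The cutoffs $\chi_X,\chi_Y$ make both operators properly supported. We use that $\Sph^{d-1}\times\R$ is a $d$-manifold, with local charts for $w$.

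\emph{Main obstacle.} The step needing the most care is the canonical-graph verification on each branch. This includes checking that the conic cutoff in $\omega$ indeed produces a classical FIO whose relation is a single graph. It also requires that the residual part near $\omega=0$ is smoothing, which can be shown by integrating by parts in $x$ using the compact support of $\chi_X$.
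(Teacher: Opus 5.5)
Your proposal is correct and follows the paper's proof essentially step by step. You use the single-phase representation with phase $\tau(b-w\cdot x)$ and constant amplitude, which gives order $-(d-1)/2$, nondegeneracy from the $db$-component, and ellipticity. You compute the same canonical relation, with its two antipodal branches and the explicit inverse $x=bw-\eta_w/\eta_b$, $\xi=\eta_b w$ on each branch. You then do the same rank-$(2d-1)$ count for the base projection and handle $\mathcal R^*$ by transposition.

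The only addition is your explicit removal of the piece near $\omega=0$. This is immediate, because its kernel $\frac{1}{2\pi}\int_{\R}\chi_0(\omega)e^{i\omega(b-w\cdot x)}\,d\omega$ is already a smooth (indeed Schwartz) function of $b-w\cdot x$, so no integration by parts in $x$ is needed. The paper makes this point only later, for $\mathrm A^k$ and the synthesis map.
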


\begin{proof}
See Section~\ref{app:radon-fio-proof}.
\end{proof}

With Theorem~\ref{thm:sss-fio-lp} and Lemma~\ref{lem:radon-fio} in hand, we state
the mapping properties of $\mathcal R$ and $\mathcal R^*$ used below.

\begin{lemma}
\label{lem:radon-lp-mapping}
Let $d\ge2$ and $1<p<\infty$, and let $\delta_p,\rho_p$ be as in
\eqref{eq:deltap-rhop-def}. The order $-(d-1)/2$ of $\mathcal R$ and
$\mathcal R^*$ produces the Sobolev regularity gain
\begin{align}
    \frac{d-1}{2}-\delta_p=\rho_p
    \quad\bigl(=\tfrac{d-1}{p}\text{ when }p\ge2\bigr).
    \label{eq:radon-gain}
\end{align}
That is, for all $\chi\in C_c^\infty(\R^d)$, $\psi\in C_c^\infty(\Sph^{d-1}\times\R)$,
and $s\in\R$,
\begin{align}
    \norm{\psi\,\mathcal R(\chi f)}_{W^{s+\rho_p,p}(\Sph^{d-1}\times\R)}
    &\le C\,\norm{f}_{W^{s,p}(\R^d)},
    \label{eq:radon-forward-lp}\\
    \norm{\chi\,\mathcal R^*(\psi\Phi)}_{W^{s+\rho_p,p}(\R^d)}
    &\le C\,\norm{\Phi}_{W^{s,p}(\Sph^{d-1}\times\R)}.
    \label{eq:dual-radon-forward-lp}
\end{align}
\end{lemma}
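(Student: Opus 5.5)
The plan is to specialize Theorem~\ref{thm:sss-fio-lp} directly, using Lemma~\ref{lem:radon-fio} to verify its hypotheses. The only real work is handling the two antipodal branches of the canonical relation, which is done with a microlocal partition of unity. We treat the forward estimate \eqref{eq:radon-forward-lp} first and then obtain \eqref{eq:dual-radon-forward-lp} by the symmetric argument.

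\emph{Step 1 (forward estimate).} Fix $\chi\in C_c^\infty(\R^d)$ and $\psi\in C_c^\infty(\Sph^{d-1}\times\R)$. Choose $\tilde\chi\in C_c^\infty(\R^d)$ with $\tilde\chi\equiv1$ on $\supp\chi$, so that $\psi\mathcal R(\chi f)=\psi\mathcal R\tilde\chi(\chi f)$. Since multiplication by $\chi$ is bounded on $W^{s,p}(\R^d)$, it suffices to bound $\psi\mathcal R\tilde\chi:W^{s,p}\to W^{s+\rho_p,p}$.

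By Lemma~\ref{lem:radon-fio}, the canonical relation of $\mathcal R$ splits into two branches, interchanged by $(w,b)\mapsto(-w,-b)$ and distinguished by the sign of the fiber variable $\omega$ dual to $b$. Insert a zeroth-order pseudodifferential partition of unity $I=P_++P_-+P_0$ on $Y$. Here $P_\pm$ has symbol supported in conic neighborhoods of $\{\pm\omega>0\}$ with $|\omega|\gtrsim|\eta_w|$, and $P_0$ is microsupported where $\omega$ is small relative to $\eta_w$.

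\emph{Step 2 (the three pieces).} The composition $P_0\psi\mathcal R\tilde\chi$ is smoothing, because the canonical relation of $\mathcal R$ only contains covectors with $\eta_w$ proportional to $\omega$: on it $(\eta_w,\eta_b)=(-\omega x_{\mathrm{tan}},\omega)$ with $|x|$ bounded on $\supp\tilde\chi$. So this piece maps $W^{s,p}$ into every $W^{t,p}$. Each of $P_\pm\psi\mathcal R\tilde\chi$ is an FIO of order $-(d-1)/2$ whose canonical relation is a single branch, hence a local canonical graph by Lemma~\ref{lem:radon-fio}. Theorem~\ref{thm:sss-fio-lp} with $\mu=-(d-1)/2$ then gives a map $W^{s,p}\to W^{s+(d-1)/2-\delta_p,p}=W^{s+\rho_p,p}$. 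Summing the three pieces yields \eqref{eq:radon-forward-lp}.

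\emph{Step 3 (dual estimate).} The adjoint statement in Lemma~\ref{lem:radon-fio} gives the same structure for $\chi\mathcal R^*\psi$, with canonical relation $C_{\mathcal R}^t$. The same decomposition, now applied on the input side as $\mathcal R^*(P_\pm\psi\Phi)$, together with Theorem~\ref{thm:sss-fio-lp} applied with the roles of $X$ and $Y$ swapped, gives \eqref{eq:dual-radon-forward-lp}.

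Alternatively, \eqref{eq:dual-radon-forward-lp} follows from \eqref{eq:radon-forward-lp} at exponent $p'$ and order $-s-\rho_p$ by duality. This works because $\rho_{p'}=\rho_p$ and the $W^{t,p}$ and $W^{-t,p'}$ spaces are dual. Both sides are localized by cutoffs, so working on the compact cylinder region causes no issue.

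\emph{Main obstacle.} The delicate point is justifying that the cutoff pieces are genuinely FIOs in the class of Theorem~\ref{thm:sss-fio-lp}. This has two parts. First, the pseudodifferential cutoffs on $Y=\Sph^{d-1}\times\R$ must be properly supported and compose correctly with $\mathcal R$. Second, the residual piece $P_0$ must contribute only a smoothing error. I would handle both through the standard composition calculus, writing $\mathcal R$ via its oscillatory representation $\mathcal R f(w,b)=(2\pi)^{-1}\int e^{i\omega(b-w\cdot x)}f(x)\,dx\,d\omega$. This phase makes both the branch separation by $\sgn\omega$ and the wavefront computation explicit.
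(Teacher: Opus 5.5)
Your proposal is correct and follows essentially the paper's route: you separate the two antipodal branches of $C_{\mathcal R}$ by a conic cutoff, discard a smoothing remainder, and apply Theorem~\ref{thm:sss-fio-lp} with $\mu=-(d-1)/2$ to each single-branch piece via Lemma~\ref{lem:radon-fio}. Your explicit partition $P_++P_-+P_0$ (justified by $|\eta_w|\le |x|\,|\omega|$ on the relation) and the optional duality argument for \eqref{eq:dual-radon-forward-lp} (using $\rho_{p'}=\rho_p$) only spell out details that the paper's short proof leaves implicit.
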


\begin{proof}
By Lemma~\ref{lem:radon-fio}, after inserting the cutoffs and passing to a finite
atlas on $\Sph^{d-1}$, both $\psi\mathcal R\chi$ and $\chi\mathcal R^*\psi$ are,
modulo smoothing operators, finite sums of properly supported FIOs of order
$-(d-1)/2$ whose canonical relations are local canonical graphs. The smoothing remainders map $W^{s,p}$ continuously
into $C^\infty$ and are therefore bounded. Theorem~\ref{thm:sss-fio-lp} with
$\mu=-(d-1)/2$ then yields the gain
$s-\mu-\delta_p=s+\tfrac{d-1}{2}-\delta_p=s+\rho_p$, which is exactly
\eqref{eq:radon-forward-lp} and \eqref{eq:dual-radon-forward-lp}.
\end{proof}
It follows the $L^p$ regularity of the analysis operator.
\begin{theorem}
\label{thm:canonical-density-lp-fio}
Let $1<p<\infty$ and fix any support radius $A\ge R_0$. If $u\in C_c^\infty(\R^d)$
with $\supp u\subset B_A(0)$, then the canonical density $g_u=\mathrm{A}^k u$ of
Definition~\ref{def:canonical-density} satisfies
\begin{align}
    \norm{g_u}_{L^p(\Sph^{d-1}\times\R)}
    \le C_{A,d,k,p}\,\norm{u}_{W^{s_{k,p},p}(\R^d)}.
    \label{eq:canonical-density-lp-fio}
\end{align}
Consequently $u\mapsto g_u$ extends continuously to all $u\in W^{s_{k,p},p}(\R^d)$
supported in $B_A(0)$, with \eqref{eq:canonical-density-lp-fio} still valid.
\end{theorem}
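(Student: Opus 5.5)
The plan is to write the multiplier defining $g_u$ as a bounded $L^p$ multiplier in $b$ applied to a pure $b$-derivative of $\mathcal Ru$, and then bound that derivative with Lemma~\ref{lem:radon-lp-mapping}. The numerology is right for this: $s_{k,p}+\rho_p=k+\tfrac{d+1}{2}+\delta_p+\tfrac{d-1}{2}-\delta_p=k+d$. So the $L^p$ gain of $\mathcal R$ converts $W^{s_{k,p},p}$ regularity of $u$ into exactly $k+d$ derivatives of $\mathcal Ru$. That matches the order $k+1+(d-1)$ of the multiplier $(i\omega)^{k+1}\abs{\omega}^{d-1}$.

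\emph{Step 1 (algebra of the multiplier).} Since $\abs{\omega}^{d-1}=(\sgn\omega)^{d-1}\omega^{d-1}$, we can write
\begin{align*}
(i\omega)^{k+1}\abs{\omega}^{d-1}=i^{-(d-1)}(\sgn\omega)^{d-1}\,(i\omega)^{k+d}.
\end{align*}
For $u\in C_c^\infty$ the slice theorem \eqref{eq:Fourier-slice} then gives
\begin{align*}
g_u=c_d\,i^{-(d-1)}\,\mathcal H^{\varepsilon_d}\,\partial_b^{\,k+d}\mathcal Ru,
\end{align*}
where $\varepsilon_d=0$ for odd $d$ and $\varepsilon_d=1$ for even $d$. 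Here $\mathcal H$ is, up to a unimodular constant, the Hilbert transform in $b$, i.e. the multiplier $\sgn\omega$. By M.~Riesz, $\mathcal H$ is bounded on $L^p(\R)$ for $1<p<\infty$. Applying this for a.e.\ $w$ and integrating over $\Sph^{d-1}$ (Fubini) yields
\begin{align*}
\norm{g_u}_{L^p(\Sph^{d-1}\times\R)}\le C_p\,c_d\,\norm{\partial_b^{k+d}\mathcal Ru}_{L^p(\Sph^{d-1}\times\R)}.
\end{align*}
In odd dimensions no singular integral is needed at all, and $g_u$ is supported in $\abs b\le A$. In even dimensions $g_u$ has a polynomially decaying tail in $b$. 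This tail causes no trouble because the estimate is global in $b$ through $\mathcal H$.

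\emph{Step 2 (localization and FIO regularity).} Since $\supp u\subset B_A(0)$, we have $\mathcal Ru(w,b)=0$ for $\abs b>A$. Fix $\chi\in C_c^\infty(\R^d)$ with $\chi=1$ on $B_A(0)$, and $\psi\in C_c^\infty(\Sph^{d-1}\times\R)$ with $\psi=1$ on $\Sph^{d-1}\times[-A-1,A+1]$. Then $\mathcal Ru=\psi\,\mathcal R(\chi u)$. Apply \eqref{eq:radon-forward-lp} with $s=s_{k,p}$, so that $s+\rho_p=k+d$:
\begin{align*}
\norm{\partial_b^{k+d}\mathcal Ru}_{L^p}\le\norm{\psi\,\mathcal R(\chi u)}_{W^{k+d,p}(\Sph^{d-1}\times\R)}\le C\,\norm{u}_{W^{s_{k,p},p}(\R^d)}.
\end{align*}
The first inequality holds because the Bessel-potential norm of integer order $k+d$ on the cylinder dominates the $L^p$ norm of $\partial_b^{k+d}$. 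On a compact set this is the usual equivalence with the classical $W^{m,p}$ norm in local charts. The constants depend only on $A,d,k,p$, since $\chi$ and $\psi$ are chosen from $A$ alone.

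\emph{Step 3 (extension).} Combining Steps 1 and 2 gives \eqref{eq:canonical-density-lp-fio} on $C_c^\infty(B_A(0))$. Now take $u\in W^{s_{k,p},p}$ supported in $B_A(0)$, and approximate it by mollification, $u_\epsilon\in C_c^\infty(B_{A+1}(0))$. Apply the estimate with radius $A+1$. The $g_{u_\epsilon}$ are then Cauchy in $L^p$. Their limit coincides with the distributional canonical density of Lemma~\ref{lem:analysis-multiplier-wellposed}, because the multiplier definition is continuous in $\mathcal S'$ along $u_\epsilon\to u$.

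The main obstacle is the justification inside Step 2. The SSS bound \eqref{eq:radon-forward-lp} is only local, so the compact-support observation $\psi\,\mathcal R(\chi u)=\mathcal Ru$ is exactly what lets us avoid working with the nonlocal $\Lambda^{d-1}\mathcal Ru$. We also need to confirm that the Bessel-potential $W^{k+d,p}$ norm on the manifold $\Sph^{d-1}\times\R$ controls $\partial_b^{k+d}$ in $L^p$. I would verify this via a finite atlas and the standard identification of integer-order Bessel-potential spaces with classical Sobolev spaces for $1<p<\infty$.
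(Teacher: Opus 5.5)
Your proposal is correct and follows the same route as the paper. You factor $(i\omega)^{k+1}|\omega|^{d-1}=i^{1-d}(\sgn\omega)^{d-1}(i\omega)^{k+d}$, which reduces $g_u$ to an $L^p$-bounded power of the Hilbert transform applied to $\partial_b^{k+d}\mathcal R u$; you then localize via $\mathcal R u=\psi\,\mathcal R(\zeta u)$, apply Lemma~\ref{lem:radon-lp-mapping} with $s=s_{k,p}$ and $s_{k,p}+\rho_p=k+d$, and extend by density, where your check that the limit agrees with the distributional canonical density is slightly more explicit than the paper's.
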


\begin{proof}
Set $N:=k+d$, and write $\mathcal H$ for the Hilbert transform in the bias
variable $b$, normalized by
\begin{align}
    \mathcal F_b(\mathcal H\Phi)(w,\omega)
    =-\,i\,\sgn(\omega)\,\mathcal F_b\Phi(w,\omega).
    \label{eq:hilbert-symbol}
\end{align}
The multiplier defining $\mathrm{A}^k $ in Definition~\ref{def:canonical-density}
factors as
\begin{align}
    (i\omega)^{k+1}\abs{\omega}^{d-1}
    =i^{\,1-d}\,(\sgn\omega)^{d-1}\,(i\omega)^N,
    \label{eq:md-factorization}
\end{align}
an exact identity, since $(\sgn\omega)^{d-1}\,\omega^{d-1}=\abs{\omega}^{d-1}$ and
$i^{1-d}i^{d-1}=1$. By \eqref{eq:hilbert-symbol}, the factor $(\sgn\omega)^{d-1}$
is the symbol of $(i\mathcal H)^{d-1}$, which reduces to the identity when $d$ is
odd and to $i\mathcal H$ when $d$ is even. Writing $M_d:=(i\mathcal H)^{d-1}$ and
using the slice identity
$\mathcal F_b(\mathcal Ru)(w,\omega)=\widehat u(\omega w)$, equation
\eqref{eq:md-factorization} becomes
\begin{align*}
    g_u=c_d\,i^{\,1-d}\,M_d\,\partial_b^N\mathcal Ru
    \qquad\text{in }\mathcal S'(\Sph^{d-1}\times\R),
\end{align*}
consistent with the multiplier definition of $\mathrm{A}^k $ used in
Section~\ref{sec:radon-bv-sobolev}. The Hilbert
transform is bounded on $L^p(\R)$ for every $1<p<\infty$; see, e.g.,
~\cite{duoandikoetxea_FourierAnalysis_2000,stein_harmonicanalysis_1993}.
Applying this in the variable $b$ for
each fixed $w$ and integrating in $w$ shows that $M_d=(i\mathcal H)^{d-1}$ is
bounded on $L^p(\Sph^{d-1}\times\R)$, hence
\begin{align}
    \norm{g_u}_{L^p(\Sph^{d-1}\times\R)}
    \le C\,\norm{\partial_b^N\mathcal Ru}_{L^p(\Sph^{d-1}\times\R)} .
    \label{eq:gu-after-multiplier}
\end{align}

Since $\supp u\subset B_A(0)$, the Radon transform $\mathcal Ru$ is supported in
$\Sph^{d-1}\times[-A,A]$. Choose $\zeta\in C_c^\infty(\R^d)$ with $\zeta\equiv1$
on $B_A(0)$, and $\psi\in C_c^\infty(\Sph^{d-1}\times\R)$ with $\psi\equiv1$ on
$\Sph^{d-1}\times[-A,A]$, so that $\mathcal Ru=\psi\,\mathcal R(\zeta u)$. As
$\partial_b$ is a coordinate vector field on $\Sph^{d-1}\times\R$,
\begin{align*}
    \norm{\partial_b^N\mathcal Ru}_{L^p}
    =\norm{\partial_b^N\bigl(\psi\,\mathcal R(\zeta u)\bigr)}_{L^p}
    \le\norm{\psi\,\mathcal R(\zeta u)}_{W^{N,p}(\Sph^{d-1}\times\R)} .
\end{align*}
Because $s_{k,p}+\rho_p=N$ (indeed
$s_{k,p}+\rho_p=(s_k+\delta_p)+(\tfrac{d-1}{2}-\delta_p)=s_k+\tfrac{d-1}{2}=k+d$),
Lemma~\ref{lem:radon-lp-mapping} (with $\chi=\zeta$, $f=u$, $s=s_{k,p}$) gives
\begin{align}
    \norm{\psi\,\mathcal R(\zeta u)}_{W^{N,p}(\Sph^{d-1}\times\R)}
    \le C\,\norm{u}_{W^{s_{k,p},p}(\R^d)} .
    \label{eq:radon-applied-density}
\end{align}
Chaining \eqref{eq:gu-after-multiplier}--\eqref{eq:radon-applied-density} proves
\eqref{eq:canonical-density-lp-fio} for $u\in C_c^\infty$. Both sides are
continuous in the $W^{s_{k,p},p}$ topology, so the bound extends by density to all
$u\in W^{s_{k,p},p}(\R^d)$ supported in $B_A(0)$, and the limiting $g_u$ agrees
with $\mathrm{A}^k u$ distributionally.
\end{proof}
Applying Lemma~\ref{lem:radon-lp-mapping}, we obtain the regularity of dual Radon transform.
\begin{lemma}
\label{lem:localized-dual-radon-lp}
Let $1<p<\infty$ and $\eta\in C_c^\infty(\R^d)$. Then
\begin{align}
    \norm{\eta\,\mathcal R^*\Phi}_{W^{\rho_p,p}(\R^d)}
    \le C_{\eta,p}\,\norm{\Phi}_{L^p(\Sph^{d-1}\times\R)}
    \label{eq:localized-dual-radon-lp}
\end{align}
for all $\Phi\in L^p(\Sph^{d-1}\times\R)$, with $\rho_p$ as in
\eqref{eq:deltap-rhop-def}.
\end{lemma}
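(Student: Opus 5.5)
The plan is to reduce the estimate to Lemma~\ref{lem:radon-lp-mapping}, specifically \eqref{eq:dual-radon-forward-lp} with $s=0$. That inequality requires a compactly supported cutoff $\psi$ on the cylinder, whereas here $\Phi\in L^p(\Sph^{d-1}\times\R)$ is arbitrary. The first step is therefore to show that, after multiplication by $\eta$, only a bounded window in the bias variable matters.

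Fix $A$ with $\supp\eta\subset B_A(0)$. For $x\in\supp\eta$ and every $w\in\Sph^{d-1}$ we have $|w\cdot x|\le A$. Since $\mathcal R^*\Phi(x)=\int_{\Sph^{d-1}}\Phi(w,w\cdot x)\,d\sigma(w)$ only samples $\Phi$ at bias values $b=w\cdot x$, the back-projection on $\supp\eta$ sees only $\Sph^{d-1}\times[-A,A]$. Choose $\psi\in C_c^\infty(\Sph^{d-1}\times\R)$, depending only on $b$, with $\psi\equiv1$ on $\Sph^{d-1}\times[-A-1,A+1]$. Then
\begin{align*}
    \eta\,\mathcal R^*\Phi=\eta\,\mathcal R^*(\psi\Phi)
\end{align*}
pointwise a.e. To justify this for general $\Phi\in L^p$, I would first prove it for $\Phi\in C_c^\infty$, where the identity is literal. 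I would then note that $\Phi\mapsto\eta\,\mathcal R^*\Phi$ is bounded into $L^p(\R^d)$ by an elementary Minkowski/Fubini estimate: for fixed $w$, $\int|\eta(x)|^p|\Phi(w,w\cdot x)|^p\,dx\le C\|\Phi(w,\cdot)\|^p_{L^p(\R)}$ by slicing $x$ along $w$. This defines $\eta\,\mathcal R^*\Phi$ for $\Phi\in L^p$ as an $L^p$ function, and the identity passes to the limit.

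Next, apply \eqref{eq:dual-radon-forward-lp} with $\chi=\eta$, this $\psi$, and $s=0$:
\begin{align*}
    \norm{\eta\,\mathcal R^*(\psi\Phi)}_{W^{\rho_p,p}(\R^d)}\le C\norm{\Phi}_{W^{0,p}(\Sph^{d-1}\times\R)}=C\norm{\Phi}_{L^p(\Sph^{d-1}\times\R)},
\end{align*}
using $W^{0,p}=L^p$. The estimate holds first for $\Phi\in C_c^\infty(\Sph^{d-1}\times\R)$, which is dense in $L^p$ for $1<p<\infty$. By the $L^p$ continuity from the previous paragraph, the approximants $\eta\,\mathcal R^*\Phi_n$ converge in $L^p$ and are Cauchy in $W^{\rho_p,p}$. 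Their limits coincide, which gives \eqref{eq:localized-dual-radon-lp} for all $\Phi\in L^p$ with $C_{\eta,p}$ depending on $\eta$ (through $A$ and $\psi$) and $p$.

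The main obstacle is essentially bookkeeping: the localization identity must hold for rough $\Phi$, and $\eta\,\mathcal R^*\Phi$ must be meaningful for $\Phi\in L^p$ without compact bias support. The slicing bound handles both, since $\mathcal R^*$ restricted to $\supp\eta$ only involves a compact bias range. All the genuine harmonic analysis, namely the Seeger--Sogge--Stein loss $\delta_p$ and the resulting gain $\rho_p$, is already contained in Lemma~\ref{lem:radon-lp-mapping}.
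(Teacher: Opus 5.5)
Your proposal is correct and follows the paper's proof: you localize with a cutoff $\psi\equiv1$ on the bias window $[-A,A]$ that $\supp\eta$ can see, so that $\eta\,\mathcal R^*\Phi=\eta\,\mathcal R^*(\psi\Phi)$, and then apply \eqref{eq:dual-radon-forward-lp} with $s=0$ and $\chi=\eta$. Your extra slicing and density argument, which makes $\eta\,\mathcal R^*\Phi$ well defined for arbitrary $\Phi\in L^p$ and justifies the localization identity, is a valid refinement of a step the paper leaves implicit.
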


\begin{proof}
Pick $A>0$ with $\supp\eta\subset B_A(0)$ and $\psi\in C_c^\infty(\Sph^{d-1}\times
\R)$ with $\psi\equiv1$ on $\Sph^{d-1}\times[-A,A]$. Since $\eta\,\mathcal
R^*\Phi$ only evaluates $\Phi(w,b)$ at $b=w\cdot x$ with $\abs{w\cdot x}\le A$ on
$\supp\eta$, we have $\eta\,\mathcal R^*\Phi=\eta\,\mathcal R^*(\psi\Phi)$.
Estimate \eqref{eq:dual-radon-forward-lp} of Lemma~\ref{lem:radon-lp-mapping} with
$s=0$ and $\chi=\eta$, together with \eqref{eq:radon-gain},
gives~\eqref{eq:localized-dual-radon-lp}.
\end{proof}

\subsection{\texorpdfstring{$L^p$}{Lp} sandwich embedding between Sobolev spaces}
With the regularity results in hand, we can now prove the regularity of the bounded-bias 
synthesis operator.
\begin{proposition}
\label{prop:ridge-integral-regularity-lp}
Let $1<p<\infty$, let $\Omega\subset\R^d$ be bounded and Lipschitz, and let
$R>\sup_{x\in\Omega}|x|$.  For $g\in L^p(\MR)$, define
\begin{align*}
    \mathrm{S}^k_R g(x)=\int_{\MR}\sigma_k(w\cdot x-b)g(w,b)\,d\lambda(w,b).
\end{align*}
Then
\begin{align}
    \mathrm{S}^k_R g\in W^{r_{k,p},p}(\Omega),
    \qquad
    \norm{\mathrm{S}^k_R g}_{W^{r_{k,p},p}(\Omega)}
    \le C\norm{g}_{L^p(\MR)},
    \label{eq:AR-regularity-lp}
\end{align}
where $r_{k,p}=k+1+\rho_p$ and $C$ depends only on $\Omega,R,d,k,p$.
\end{proposition}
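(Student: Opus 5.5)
We mirror the proof of Proposition~\ref{prop:ridge-integral-regularity}, replacing the Hilbertian Lemma~\ref{lem:localized-dual-radon} by the FIO estimate \eqref{eq:dual-radon-forward-lp} of Lemma~\ref{lem:radon-lp-mapping}. Fix $\chi\in C_c^\infty((-R,R))$ with $\chi\equiv1$ on $[-R_0,R_0]$, and for a.e.\ $w$ set
\begin{align*}
    F(w,t):=\int_{-R}^R\sigma_k(t-b)\,g(w,b)\,db .
\end{align*}
As in the $L^2$ case, $\mathrm{S}^k_R g=\mathcal R^*(\chi F)$ on $\Omega$ because $\chi(w\cdot x)=1$ for $x\in\Omega$. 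Take $\eta\in C_c^\infty(\R^d)$ with $\eta\equiv1$ on $\Omega$ and $\psi\in C_c^\infty(\Sph^{d-1}\times\R)$ with $\psi\equiv1$ on $\Sph^{d-1}\times\supp\chi$. Since $\eta\,\mathcal R^*(\psi\chi F)$ is an extension of $\mathrm{S}^k_R g$, the definition of the restriction norm gives
\begin{align*}
    \norm{\mathrm{S}^k_R g}_{W^{r_{k,p},p}(\Omega)}\le\norm{\eta\,\mathcal R^*(\psi\,\chi F)}_{W^{k+1+\rho_p,p}(\R^d)} .
\end{align*}
Estimate \eqref{eq:dual-radon-forward-lp} with $s=k+1$ then bounds the right-hand side by $C\norm{\chi F}_{W^{k+1,p}(\Sph^{d-1}\times\R)}$.

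The main obstacle is that $\chi F$ must lie in the full (isotropic) Sobolev space $W^{k+1,p}$ on the cylinder, not merely in $L^p(\Sph^{d-1};W^{k+1,p}(\R))$ as in Lemma~\ref{lem:localized-dual-radon}. For $g\in L^p(\MR)$, $F$ has essentially no regularity in $w$, so angular derivatives of $\chi F$ are not controlled. The Hilbertian lemma avoided this by needing only bias regularity. We therefore need a substitute that uses only $b$-regularity, and we will try the following route first.

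Write $\chi F=\sum_{j=0}^{k}\sum\cdots$, or more directly note that $\partial_b^{k+1}(\chi F)=\chi g+\text{(lower-order terms)}$, where the lower-order terms are bounded integral operators in $b$ applied to $g$. This suggests the operator identity
\begin{align*}
    \chi F=\Lambda_b^{-(k+1)}G,\qquad G\in L^p(\Sph^{d-1}\times\R),\quad \norm{G}_{L^p}\le C\norm{g}_{L^p(\MR)},
\end{align*}
with $\Lambda_b^{-(k+1)}$ the one-dimensional Bessel potential $(1-\partial_b^2)^{-(k+1)/2}$. Here $G=(1-\partial_b^2)^{(k+1)/2}(\chi F)$ is a finite combination of $\partial_b^j(\chi F)$ for $j\le k+1$, at least when $k+1$ is even; otherwise we insert the Hilbert transform in $b$, which is bounded on $L^p$ as in Theorem~\ref{thm:canonical-density-lp-fio}. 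The $L^p$ bound on $G$ follows from the facts that $\partial_t^{k+1}F=g$ on $(-R,R)$ and that the kernels $\sigma_{k-\ell}$ are bounded on $\supp\chi\times[-R,R]$.

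Next we commute the bias operator through back-projection, at the level of symbols. The composition $\eta\,\mathcal R^*\psi\,\Lambda_b^{-(k+1)}$ is an FIO of order $-(d-1)/2-(k+1)$ with the same canonical relation as $\mathcal R^*$. On that relation the bias frequency $\omega$ satisfies $|\omega|=|\xi|$, so $\langle\omega\rangle^{-(k+1)}$ is an elliptic symbol of order $-(k+1)$ there. This is the $L^p$ analogue of the Fourier-slice commutation \eqref{eq:radon-laplace-commute}: up to smoothing errors, $\mathcal R^*\Lambda_b^{-(k+1)}=(I-\Delta)^{-(k+1)/2}\mathcal R^*$. Hence, by Theorem~\ref{thm:sss-fio-lp} and Lemma~\ref{lem:radon-fio} applied with $\mu=-(d-1)/2-(k+1)$ and $s=0$,
\begin{align*}
    \norm{\eta\,\mathcal R^*\psi\,\Lambda_b^{-(k+1)}G}_{W^{k+1+\rho_p,p}(\R^d)}\le C\norm{G}_{L^p(\Sph^{d-1}\times\R)},
\end{align*}
with the smoothing remainders harmless, exactly as in the proof of Lemma~\ref{lem:radon-lp-mapping}.

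An alternative, if the symbolic commutation is awkward, is to move $(k+1)$ $x$-derivatives onto the kernel directly. For a multi-index $|\alpha|\le k+1$, the derivative $\partial_x^\alpha\mathrm{S}^k_R g$ is a sum of $w^\alpha\,\mathcal R^*$ applied to $\partial_b^{|\alpha|}$-type expressions of $\chi F$, which are $L^p$-bounded in $(w,b)$. Lemma~\ref{lem:localized-dual-radon-lp} then gives a $W^{\rho_p,p}$ bound for each such term. This yields the result first for integer $k+1$ derivatives plus the fractional order $\rho_p$, by the characterization $\norm{v}_{W^{k+1+\rho_p,p}}\simeq\sum_{|\alpha|\le k+1}\norm{\partial^\alpha v}_{W^{\rho_p,p}}$ of Bessel-potential spaces. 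The multiplication by $w^\alpha$ is a smooth function on the cylinder, absorbed into $\psi$. This second route seems cleanest, and I would try it first. The remaining care is justifying the differentiation under the integral for the top-order term, where $\partial_b^{k+1}$ produces $g$ itself, which we handle by density of smooth $g$.
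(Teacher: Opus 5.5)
Your second route is exactly the paper's proof. It differentiates under the integral so that each $\partial_x^\alpha$ ($|\alpha|\le k+1$) of the synthesis becomes $\mathcal R^*$ of an $L^p$-bounded density, namely $w^\alpha$ times $\partial_t^{|\alpha|}$ of the cut-off bias antiderivative, which involves $g$ itself at top order, then applies Lemma~\ref{lem:localized-dual-radon-lp} term by term, and concludes with the Leibniz rule and the lifting property of Bessel-potential spaces, including the density argument for the top-order term. Your first route via $\Lambda_b^{-(k+1)}$ is also viable and is essentially the paper's separate microlocal proof (Lemma~\ref{lem:inverse-fio-pair}); there, for odd $k+1$, the $L^p$ bound on $G$ needs the one-dimensional Mikhlin multiplier (lifting) theorem, since a Hilbert transform alone does not produce it.
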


\begin{proof}
The elementary estimate
\begin{align}
    \left\|\mathrm{S}^k_R g\right\|_{L^p(\Omega)}
    \le C\left\|g\right\|_{L^p(\mathcal{M}_R)}
    \label{eq:AR-elementary-Lp}
\end{align}
follows from H\"older's inequality on the finite measure set $\mathcal{M}_R$.

Write $Vg(x):=\int_{\mathcal{M}_R}\sigma_k(w\cdot x-b)g(w,b)\,d\lambda(w,b)$ for the
synthesis evaluated on all of $\mathbb{R}^d$, so that $\mathrm{S}^k_Rg=Vg|_\Omega$.
Fix $\eta\in C_c^\infty(B_R(0))$ with $\eta\equiv1$ on a neighbourhood of
$\overline{\Omega}$, and $\chi\in C_c^\infty(\mathbb{R})$ with $\chi\equiv1$ on
$[-R,R]$. Since $\sigma_k^{(j)}=\sigma_{k-j}$ for $0\le j\le k$ and
$\sigma_k^{(k+1)}=\delta_0$, differentiation in $x$ gives, first for smooth $g$
and then by $L^p$ density,
\begin{align}
    \partial_x^\gamma Vg=\mathcal{R}^{*}\Phi_\gamma
    \quad\text{on }B_R(0),
    \qquad
    \Phi_\gamma(w,t):=w^\gamma\int_{-R}^{R}
    \sigma_{k-\left|\gamma\right|}(t-b)\,g(w,b)\,db,
    \label{eq:AR-derivative-identity}
\end{align}
for every $\left|\gamma\right|\le k+1$, with the convention
$\Phi_\gamma=w^\gamma g\,\mathbf{1}_{\mathcal{M}_R}$ when
$\left|\gamma\right|=k+1$. Direct estimates give
\begin{align*}
    \left\|\chi\,\Phi_\gamma\right\|_{L^p(\mathbb{S}^{d-1}\times\mathbb{R})}
    \le C\left\|g\right\|_{L^p(\mathcal{M}_R)},
    \qquad \left|\gamma\right|\le k+1,
\end{align*}
the case $\left|\gamma\right|=k+1$ being immediate. Applying
Lemma~\ref{lem:localized-dual-radon-lp} to each $\chi\Phi_\gamma$,
\begin{align}
    \left\|\eta\,\partial_x^\gamma Vg\right\|_{W^{\rho_p,p}(\mathbb{R}^d)}
    \le C\left\|g\right\|_{L^p(\mathcal{M}_R)},
    \qquad \left|\gamma\right|\le k+1 .
    \label{eq:AR-derivative-bound}
\end{align}
Multiplication by a fixed element of $C_c^\infty(\mathbb{R}^d)$ is bounded on
$W^{\rho_p,p}(\mathbb{R}^d)$, so \eqref{eq:AR-derivative-bound} together with the
Leibniz rule bounds $\partial^\gamma(\eta\,Vg)$ in $W^{\rho_p,p}(\mathbb{R}^d)$ by
$C\left\|g\right\|_{L^p(\mathcal{M}_R)}$ for every $\left|\gamma\right|\le k+1$. The
lifting property of Bessel potential spaces on
$\mathbb{R}^d$~\cite{stein_harmonicanalysis_1993,triebel2008function},
\begin{align*}
    \left\|F\right\|_{W^{k+1+\rho_p,p}(\mathbb{R}^d)}
    \le C\sum_{\left|\gamma\right|\le k+1}
    \left\|\partial^\gamma F\right\|_{W^{\rho_p,p}(\mathbb{R}^d)},
\end{align*}
applied to $F=\eta\,Vg$, and the definition of the restriction norm,
$\left\|\mathrm{S}^k_Rg\right\|_{W^{k+1+\rho_p,p}(\Omega)}
\le\left\|\eta\,Vg\right\|_{W^{k+1+\rho_p,p}(\mathbb{R}^d)}$, prove
\eqref{eq:AR-regularity-lp}.
\end{proof}
The two embeddings below differ by $s_{k,p}-r_{k,p}=2\delta_p$, the round trip of
the Seeger--Sogge--Stein loss through analysis and synthesis. At $p=2$ this gap
closes and the sandwich becomes the norm equivalence
$\mathcal{R}L^2_k(\Omega)=H^{s_k}(\Omega)$ of
Corollary~\ref{cor:quotient-norm-equiv-sobolev}. Now we can prove the Sobolev Sandwich Theorem.
\begin{theorem}
\label{the:radon-lp-sobolev-embedding}
Let $\Omega\subset\R^d$ be bounded and Lipschitz, let $1<p<\infty$ and let
$R>R_0:=\sup_{x\in\Omega}|x|$. Then
\begin{align}
    \mathcal{R}L^p_k(\Omega)
    \hookrightarrow W^{r_{k,p},p}(\Omega),
    \qquad
    \norm{u}_{W^{r_{k,p},p}(\Omega)}
    \le C\norm{u}_{\mathcal{R}L^p_k(\Omega)},
    \label{eq:Bkp-to-basic-sobolev}
\end{align}
and
\begin{align}
    W^{s_{k,p},p}(\Omega)
    \hookrightarrow \mathcal{R}L^p_k(\Omega),
    \qquad
    \norm{u}_{\mathcal{R}L^p_k(\Omega)}
    \le C\norm{u}_{W^{s_{k,p},p}(\Omega)}.
    \label{eq:sobolev-to-Bkp}
\end{align}
\end{theorem}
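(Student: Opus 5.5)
The first embedding \eqref{eq:Bkp-to-basic-sobolev} follows directly from Proposition~\ref{prop:ridge-integral-regularity-lp}. Given $u\in\mathcal{R}L^p_k(\Omega)$, take any $g\in L^p(\MR)$ with $\mathrm{S}^k_R g=u$ on $\Omega$. Then $\norm{u}_{W^{r_{k,p},p}(\Omega)}\le C\norm{g}_{L^p(\MR)}$, and taking the infimum over all admissible $g$ gives the quotient-norm bound \eqref{eq:radon-lp-quotient-norm}. Nothing more is needed for this half.

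For the second embedding \eqref{eq:sobolev-to-Bkp}, I will copy the proof of Theorem~\ref{thm:bounded-bias-decomposition}, replacing the $L^2$ isometry by the $L^p$ bound for the analysis map.

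First, let $u\in W^{s_{k,p},p}(\Omega)$. Using the bounded extension operator together with a fixed cutoff equal to $1$ near $\overline\Omega$, I obtain $u_e\in W^{s_{k,p},p}(\R^d)$ with $\supp u_e\subset B_A(0)$ for a fixed $A$ (say $A=R$), $u_e|_\Omega=u$, and $\norm{u_e}_{W^{s_{k,p},p}}\le C\norm{u}_{W^{s_{k,p},p}(\Omega)}$.

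Second, Theorem~\ref{thm:canonical-density-lp-fio} gives $g_{u_e}\in L^p(\Sph^{d-1}\times\R)$ with $\norm{g_{u_e}}_{L^p}\le C\norm{u_e}_{W^{s_{k,p},p}}$. Set $g:=g_{u_e}\mathbf 1_{\MR}$, so that $\norm{g}_{L^p(\MR)}$ is controlled by the same quantity.

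Third, I need the reproduction identity $u_e=\mathrm{S}^k g_{u_e}+q$ with $q\in\Pcal_k$. Proposition~\ref{prop:density-to-function} gives this only for $H^{s_k}$ functions, so I will argue by density. For $u\in C_c^\infty(B_A)$ the identity holds, and in addition $\mathrm{S}^k(g_u\mathbf 1_{|b|>R})|_\Omega\in\Pcal_k$, because $|b|>R>|w\cdot x|$ forces $\sigma_k(w\cdot x-b)$ to be either $0$ or a polynomial of degree $\le k$ in $x$. Hence
\[
u=\mathrm{S}^k_R(g_u\mathbf 1_{\MR})+P_u \quad\text{on }\Omega, \qquad P_u\in\Pcal_k .
\]
Now approximate $u_e$ by $u_n\in C_c^\infty(B_{A'})$ in $W^{s_{k,p},p}$. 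Then $g_{u_n}\mathbf 1_{\MR}\to g$ in $L^p$ by Theorem~\ref{thm:canonical-density-lp-fio}, and so $\mathrm{S}^k_R(g_{u_n}\mathbf 1_{\MR})\to \mathrm{S}^k_R g$ in $L^p(\Omega)$ by \eqref{eq:AR-elementary-Lp}. It follows that $P_{u_n}=u_n-\mathrm{S}^k_R(\cdots)$ converges in $L^p(\Omega)$. Since $\Pcal_k$ is finite dimensional and therefore closed, the limit $P$ lies in $\Pcal_k$, and $\norm{P}_{L^p(\Omega)}\le \norm{u}_{L^p(\Omega)}+C\norm{g}_{L^p}$. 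This yields $u=\mathrm{S}^k_R g+P$ on $\Omega$. Working this way also avoids using the weighted-$L^1$ lemma for general $p$.

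Fourth, Lemma~\ref{lem:polynomial-lp-density} provides $g_P\in L^p(\MR)$ with $\mathrm{S}^k_R g_P=P$ on $\Omega$ and $\norm{g_P}_{L^p}\le C\norm{P}_{L^p(\Omega)}$. Hence $u=\mathrm{S}^k_R(g+g_P)$ and
\[
\norm{u}_{\mathcal{R}L^p_k}\le \norm{g}_{L^p}+\norm{g_P}_{L^p}\le C\norm{u}_{W^{s_{k,p},p}(\Omega)},
\]
where the last step uses $\norm{u}_{L^p}\le\norm{u}_{W^{s_{k,p},p}}$.

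I expect the main obstacle to be the third step: justifying the reproduction formula and the polynomial bookkeeping for $p\ne2$, where $u_e$ need not lie in $H^{s_k}$. The density argument handles this as long as the polynomial remainder is controlled in norm, which is exactly what the finite-dimensionality of $\Pcal_k$ together with \eqref{eq:AR-elementary-Lp} provide. A second point to check is that the supports of the approximants stay in a fixed ball, so that the constant in Theorem~\ref{thm:canonical-density-lp-fio} is uniform along the approximating sequence.
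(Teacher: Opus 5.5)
Your proposal is correct and follows the paper's proof. The first embedding comes from taking the infimum in Proposition~\ref{prop:ridge-integral-regularity-lp}. The second uses the truncated canonical density, the polynomial tail for $|b|>R$, Lemma~\ref{lem:polynomial-lp-density}, and a density argument from $C_c^\infty$.

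The only minor difference is where you pass to the limit. You take it in the density and in the polynomial remainder, using the continuity of $u\mapsto g_u$ from Theorem~\ref{thm:canonical-density-lp-fio} and the closedness of $\Pcal_k$, and this gives an explicit representing density for $u$. The paper instead proves the norm bound for $u\in C_c^\infty(B_A(0))$ first. It then takes the limit in the Banach space $\mathcal{R}L^p_k(\Omega)$ and identifies that limit with $u$ through the embedding into $L^p(\Omega)$.
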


\begin{proof}
The first embedding is immediate from
Proposition~\ref{prop:ridge-integral-regularity-lp}: if $u=\mathrm{S}^k_R g$ on
$\Omega$, then $\norm{u}_{W^{r_{k,p},p}(\Omega)}\le C\norm{g}_{L^p(\MR)}$, and
taking the infimum over all such $g$ gives \eqref{eq:Bkp-to-basic-sobolev}.

For \eqref{eq:sobolev-to-Bkp} we argue first on $C_c^\infty$ and then pass to the
limit.  Fix $A>0$ with $\Omega\subset B_A(0)$ and let
$u\in C_c^\infty(\R^d)$ with $\supp u\subset B_A(0)$.  Since $u\in H^{s_k}(\R^d)$,
Proposition~\ref{prop:density-to-function} yields $q\in\mathcal P_k(\R^d)$ and the
canonical density $\widetilde g=\mathrm{A}^k u$ with $u=q+\mathrm{S}^k\widetilde g$ on
$\R^d$.  Set $g_0:=\widetilde g\,\mathbf 1_{\MR}$.  By
Theorem~\ref{thm:canonical-density-lp-fio} (valid for $1<p<\infty$),
\begin{align}
    \norm{g_0}_{L^p(\MR)}
    \le\norm{\widetilde g}_{L^p(\Sph^{d-1}\times\R)}
    \le C\norm{u}_{W^{s_{k,p},p}(\R^d)} .
    \label{eq:canonical-density-bound-in-main}
\end{align}
Exactly as in the proof of Theorem~\ref{thm:bounded-bias-decomposition}, for
$x\in\Omega$ the tail parameter obeys $|b|>R>|x|$, so
$\bigl(\mathrm{S}^k(\widetilde g-g_0)\bigr)|_\Omega\in\mathcal P_k(\R^d)$, whence
\begin{align*}
    u=q_0+\mathrm{S}^k_R g_0\quad\text{on }\Omega,
    \qquad q_0\in\mathcal P_k(\R^d).
\end{align*}
By \eqref{eq:AR-elementary-Lp} and \eqref{eq:canonical-density-bound-in-main},
\begin{align}
    \norm{q_0}_{L^p(\Omega)}
    \le\norm{u}_{L^p(\Omega)}+\norm{\mathrm{S}^k_R g_0}_{L^p(\Omega)}
    \le C\norm{u}_{W^{s_{k,p},p}(\R^d)} .
    \label{eq:q0-Lp-bound-main}
\end{align}
By Lemma~\ref{lem:polynomial-lp-density} with exponent $p$, choose
$g_{q_0}\in L^p(\MR)$ with $\mathrm{S}^k_R g_{q_0}=q_0$ on $\Omega$ and
$\norm{g_{q_0}}_{L^p(\MR)}\le C\norm{q_0}_{L^p(\Omega)}$.  Then
$u=\mathrm{S}^k_R(g_0+g_{q_0})$ on $\Omega$, so by
\eqref{eq:canonical-density-bound-in-main} and \eqref{eq:q0-Lp-bound-main},
\begin{align}
    \norm{u}_{\mathcal{R}L^p_k(\Omega)}
    \le\norm{g_0+g_{q_0}}_{L^p(\MR)}
    \le C\norm{u}_{W^{s_{k,p},p}(\R^d)},
    \qquad u\in C_c^\infty(B_A(0)).
    \label{eq:Ccinfty-bound-lp}
\end{align}

Now let $u\in W^{s_{k,p},p}(\Omega)$ be arbitrary and set $u_e:=Eu$, a compactly
supported extension with $\supp u_e\subset B_A(0)$ (enlarging $A$ if necessary)
and $\norm{u_e}_{W^{s_{k,p},p}(\R^d)}\le C\norm{u}_{W^{s_{k,p},p}(\Omega)}$.
Choose $u_n\in C_c^\infty(B_A(0))$ with $u_n\to u_e$ in $W^{s_{k,p},p}(\R^d)$.  By
\eqref{eq:Ccinfty-bound-lp}, $\{u_n|_\Omega\}$ is Cauchy in the Banach space
$\mathcal{R}L^p_k(\Omega)$; let $v$ be its limit.  Since
$\mathcal{R}L^p_k(\Omega)\hookrightarrow W^{r_{k,p},p}(\Omega)\hookrightarrow
L^p(\Omega)$ by \eqref{eq:Bkp-to-basic-sobolev} and $r_{k,p}>0$, while
$u_n|_\Omega\to u_e|_\Omega=u$ in $L^p(\Omega)$, the two limits agree: $v=u$.
Hence $u\in\mathcal{R}L^p_k(\Omega)$ and, by the continuity of the norm,
\begin{align*}
    \norm{u}_{\mathcal{R}L^p_k(\Omega)}
    =\lim_{n}\norm{u_n}_{\mathcal{R}L^p_k(\Omega)}
    \le C\norm{u_e}_{W^{s_{k,p},p}(\R^d)}
    \le C\norm{u}_{W^{s_{k,p},p}(\Omega)},
\end{align*}
which is \eqref{eq:sobolev-to-Bkp}.
\end{proof}

\begin{remark}
\label{rem:relu-k-derivative-count}
The $k+1$ extra derivatives in
Proposition~\ref{prop:ridge-integral-regularity-lp} come from the activation, not
from differentiability of $g$ in the bias variable.  Equivalently, for
\begin{align*}
    F(w,t)=\int_{-R}^R\sigma_k(t-b)g(w,b)\,db,
\end{align*}
one has $\partial_t^{k+1}F=g$ in distributions.  The remaining
$\rho_p$ derivatives are the local $L^p$ gain of the dual Radon FIO: for $p\ge2$
this is $(d-1)/p$, and for $1<p\le2$ it is $(d-1)/p'$.
\end{remark}

\subsection{A microlocal proof of the \texorpdfstring{$L^p$}{Lp} sandwich and its sharpness}
\label{subsec:insightful-proof-microlocal}
The proof of Theorem~\ref{the:radon-lp-sobolev-embedding} above separates the
Radon smoothing from the $k+1$ derivatives carried by the activation.  For readers
comfortable with Fourier integral operators, the same result has a shorter
microlocal interpretation.  
The relevant analysis and synthesis maps carry the \emph{same} phase as the Radon
transform itself, and differ from it only by a multiplier in the bias frequency:
\begin{align}
    \mathrm{A}^k&=c_d\,m_A(D_b)\,\mathcal R,
    &&m_A(\tau):=(i\tau)^{k+1}|\tau|^{d-1},
    \label{eq:Rk-as-composition}\\
    \mathrm{S}^k_R g&=v_g\big|_{\Omega},
    \quad v_g:=\mathcal R^{*}m_S(D_b)\,g^{R},
    &&m_S:=\mathcal F\sigma_k,
    \quad g^{R}:=g\,\mathbf 1_{\mathcal{M}_R},
    \label{eq:AR-as-composition}
\end{align}
where $m(D_b):=\mathcal F_b^{-1}m\,\mathcal F_b$ acts in the bias variable and
$v_g\in\mathcal S'(\mathbb{R}^d)$ is the synthesis of the truncated density on all of
$\mathbb{R}^d$, so that $\mathrm{S}^k_R$ is its restriction to $\Omega$.  Since
$\partial_t^{k+1}\sigma_k=\delta_0$ forces $(i\tau)^{k+1}m_S=1$, we have
$m_S(\tau)=(i\tau)^{-(k+1)}$ for $\tau\neq0$: away from the origin $m_A$ and $m_S$ are
elliptic and positively homogeneous of the opposite degrees $k+d$ and $-(k+1)$.
Inserting them as amplitudes into the oscillatory representation of $\mathcal R$, which
has a single phase variable, the H\"ormander order formula returns
\begin{align*}
    \underbrace{(k+1\, + \, d-1)}_{m_A} \ +\  \underbrace{(-\frac{d-1}{2})}_{\mathcal{R}}=s_k,
    \qquad
    \underbrace{(-\frac{d-1}{2})}_{\mathcal{R}^*} \  +\  \underbrace{-(k+1)}_{m_S} =-s_k ,
\end{align*}
the same as  factorizations $\partial_b^{k+1}\Lambda^{d-1}\mathcal R$ and
$\mathcal R^{*}(\sigma_k*_b\,\cdot\,)$.
The two maps thus have opposite orders, and their composition is of order zero on each
branch.  Each of the two applications of Theorem~\ref{thm:sss-fio-lp} pays its 
own Seeger-Sogge-Stein loss $\delta_p$, and the two
payments add.  This is the whole content of $s_{k,p}-r_{k,p}=2\delta_p$, and of its
vanishing at $p=2$.

\begin{lemma}
\label{lem:inverse-fio-pair}
Let $Y:=\mathbb{S}^{d-1}\times\mathbb{R}$ and $1<p<\infty$.  Modulo operators with
smooth Schwartz kernels, $\mathrm{A}^k$ and $g\mapsto v_g$ are elliptic Fourier integral
operators of orders $s_k$ and $-s_k$, with canonical relations $\mathcal{C}_{\mathcal R}$
and $\mathcal{C}_{\mathcal R}^{t}$; on each branch of Lemma~\ref{lem:radon-fio} these
relations are local canonical graphs.  Consequently, for all
$\chi\in C_c^\infty(\mathbb{R}^d)$ and $\psi\in C_c^\infty(Y)$,
\begin{align}
    \|\psi\,\mathrm{A}^k(\chi f)\|_{L^p(Y)}
    &\le C\,\|f\|_{W^{s_{k,p},p}(\mathbb{R}^d)},
    \label{eq:direct-Rk-bound}\\
    \|\chi\,v_g\|_{W^{r_{k,p},p}(\mathbb{R}^d)}
    &\le C\,\|g\|_{L^p(\mathcal{M}_R)} .
    \label{eq:direct-AR-bound}
\end{align}
\end{lemma}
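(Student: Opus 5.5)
The plan is to write both operators as $\mathcal R$ or $\mathcal R^*$ composed with a one-dimensional Fourier multiplier in the bias variable. I will show that each composition is again an FIO with the same canonical relation, with the order shifted by the homogeneity of the multiplier. Theorem~\ref{thm:sss-fio-lp} is then applied once per operator.

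\textbf{Oscillatory representation.} Start from the representation of $\mathcal R$ with a single phase variable $\tau$. For $f\in C_c^\infty$, the slice theorem \eqref{eq:Fourier-slice} gives
\begin{align*}
\mathcal Rf(w,b)=\frac{1}{2\pi}\int_{\R}\int_{\R^d}e^{i\tau(b-w\cdot x)}f(x)\,dx\,d\tau ,
\end{align*}
with phase $\varphi(w,b,x,\tau)=\tau(b-w\cdot x)$ and amplitude $1$. Since $d$ base variables are integrated against one phase variable, the order is $0+\tfrac{1}{2}-\tfrac{2d}{4}=-\tfrac{d-1}{2}$, which is the order stated in Lemma~\ref{lem:radon-fio}. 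Composing on the left with $m_A(D_b)$ only multiplies the amplitude by $m_A(\tau)$, because $D_b$ falls on $e^{i\tau b}$.

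To make this a classical symbol, split $m_A=\chi_0(\tau)m_A+(1-\chi_0(\tau))m_A$ with $\chi_0\in C_c^\infty$ equal to $1$ near $0$.
\begin{itemize}
\item The piece $(1-\chi_0)m_A$ is an elliptic symbol of order $k+d$ on $|\tau|\ge1$. Hence $\mathrm{A}^k$ is elliptic of order $k+d-\tfrac{d-1}{2}=s_k$, with the same phase and therefore the same canonical relation $\mathcal C_{\mathcal R}$.
\item The piece $\chi_0 m_A$ is compactly supported in $\tau$. Once $\psi$ and $\chi$ are inserted, its kernel $\int\chi_0 m_A\,e^{i\tau(b-w\cdot x)}\,d\tau$ is smooth, so this part is smoothing.
\end{itemize}
The synthesis side is handled the same way: $v_g=\mathcal R^*m_S(D_b)g^R$, where $\mathcal R^*$ has the transposed phase $\tau(w\cdot x-b)$. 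Away from $\tau=0$, $m_S=(i\tau)^{-(k+1)}$ is elliptic of degree $-(k+1)$, which gives order $-s_k$ and canonical relation $\mathcal C_{\mathcal R}^t$.

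\textbf{The low-frequency part of $m_S$.} This is the main obstacle. $m_S=\mathcal F\sigma_k$ is not locally integrable near $0$, and $\sigma_k$ grows polynomially, so the low-frequency cutoff has to be done in physical space. The first approach I would try uses $\sigma_k * g^R$. Because $g^R$ is supported in $|b|\le R$, only values $|t|\lesssim R+\sup|\supp\chi|$ matter after the cutoff $\chi$. I therefore replace $\sigma_k$ by $\tilde\sigma_k:=\theta\sigma_k$, where $\theta\in C_c^\infty$ equals $1$ on a large interval; this changes nothing on $\supp\chi$. Then $\tilde\sigma_k$ is compactly supported, $\widehat{\tilde\sigma_k}$ is smooth, and $\tilde\sigma_k-\sigma_k$ is killed by the cutoffs. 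Moreover $\widehat{\tilde\sigma_k}-(i\tau)^{-(k+1)}(1-\chi_0)$ is, up to Schwartz tails coming from the singularity of $\sigma_k$ at $0$, a symbol of order $-\infty$. The error operator is thus smoothing, and it maps $L^p$ of compact support into $C^\infty$. The same reasoning shows that the branch cutoff of Lemma~\ref{lem:radon-fio} can be applied to both composed operators, so on each branch their relations are local canonical graphs.

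\textbf{Applying Theorem~\ref{thm:sss-fio-lp}.}
\begin{itemize}
\item With $\mu=s_k$ and Sobolev index $s=s_{k,p}$, the target index is $s_{k,p}-s_k-\delta_p=0$. This gives \eqref{eq:direct-Rk-bound}, after adding back the smoothing remainders, which are bounded.
\item With $\mu=-s_k$ and $s=0$, the target index is $s_k-\delta_p=r_{k,p}$. Applying this to $g^R\in L^p$ with compact support gives \eqref{eq:direct-AR-bound}, using $\|g^R\|_{L^p}=\|g\|_{L^p(\MR)}$.
\end{itemize}

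Properness and the finite atlas on $\Sph^{d-1}$ are handled exactly as in the proof of Lemma~\ref{lem:radon-lp-mapping}.
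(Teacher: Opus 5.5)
Your proposal is correct and follows the paper's proof: the Radon phase $\tau(b-w\cdot x)$ (resp.\ its negative) carries the bias multiplier as amplitude, a cutoff $\chi_0$ splits off a smooth-kernel piece, the order formula gives $\pm s_k$, and Theorem~\ref{thm:sss-fio-lp} is applied with $(\mu,s)=(s_k,s_{k,p})$ and $(-s_k,0)$. Your one deviation, truncating $\sigma_k$ to $\theta\sigma_k$ in physical space, is valid but not forced: since $\partial_t^{k+1}(\theta\sigma_k)=\delta_0+\rho$ with $\rho\in C_c^\infty$, the Schwartz correction comes from the truncation $\theta$ rather than from the singularity at $0$, and the paper avoids the truncation altogether by noting that $\chi_0 m_S$ is a compactly supported distribution, whose inverse Fourier transform is smooth by Paley--Wiener--Schwartz.
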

\begin{proof}
The Schwartz kernels of $\mathrm{A}^k$ and of $g^{R}\mapsto v_g$ are the oscillatory
integrals \eqref{eq:analysis-kernel}--\eqref{eq:synthesis-kernel} of
Appendix~\ref{app:fio-composition}: one phase variable $\tau$, the Radon phase
$\tau(b-w\cdot x)$ resp.\ its negative, and amplitudes $c_d\,m_A$ and $m_S$.  The
splitting \eqref{eq:amplitude-split} writes each amplitude as $a_++a_-+\chi_0m$, where
$a_\pm$ are elliptic classical symbols of order $\deg m$ supported in $\{\pm\tau>0\}$
and $\chi_0m$ is compactly supported.  The $a_\pm$ pieces are therefore elliptic Fourier
integral operators of orders $s_k$ and $-s_k$, each carrying a single branch of
$\mathcal{C}_{\mathcal R}$, hence a local canonical graph. The $\chi_0m$ piece has a smooth
kernel.  Theorem~\ref{thm:sss-fio-lp}, applied branch by branch as in the proof of
Lemma~\ref{lem:radon-lp-mapping} with $(\mu,s)=(s_k,s_{k,p})$ and $(\mu,s)=(-s_k,0)$,
returns the output smoothness $0$ and $s_k-\delta_p=r_{k,p}$, which are
\eqref{eq:direct-Rk-bound} and \eqref{eq:direct-AR-bound}. The smooth-kernel remainders
deduce bounded smooth operators.
\end{proof}

\begin{proof}[Microlocal proof of Theorem~\ref{the:radon-lp-sobolev-embedding}]
The upper embedding \eqref{eq:Bkp-to-basic-sobolev} follows from
\eqref{eq:direct-AR-bound} with $\chi\equiv1$ on $\Omega$, by restriction to $\Omega$
and the infimum over all bounded-window representations $u=\mathrm{S}^k_Rg$.

For the lower embedding, \eqref{eq:direct-Rk-bound} with $\chi\equiv1$ on $B_A(0)$ and
$\psi\equiv1$ on $\mathcal{M}_R$ replaces Theorem~\ref{thm:canonical-density-lp-fio}, and
the remaining(the polynomial tail, Lemma~\ref{lem:polynomial-lp-density},
extension and density) is verbatim as in the proof above.
\end{proof}

We can also prove the sharpness of the sandwich embedding in the Bessel potential scale.
\begin{proposition}
\label{prop:radon-lp-sandwich-sharp}
Under the assumptions of Theorem~\ref{the:radon-lp-sobolev-embedding}, for every
$\varepsilon\in(0,s_{k,p})$ there is no continuous embedding
\begin{align}
    \mathcal{R}L^p_k(\Omega)\hookrightarrow W^{r_{k,p}+\varepsilon,p}(\Omega),
    \label{eq:no-right-improvement}
\end{align}
and no continuous embedding
\begin{align}
    W^{s_{k,p}-\varepsilon,p}(\Omega)\hookrightarrow\mathcal{R}L^p_k(\Omega).
    \label{eq:no-left-improvement}
\end{align}
For $p\neq2$ the gap $s_{k,p}-r_{k,p}=2\delta_p$ is therefore optimal in the
Bessel-potential scale.
\end{proposition}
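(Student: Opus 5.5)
The plan is to argue by contradiction in both cases. We transfer a hypothetical improved embedding into an improved $L^p$ Sobolev estimate for $\mathcal R^*$ (right side) or for the inverse of $\mathcal R$ (left side), each localized near $\Omega$. We then contradict the sharpness half of Theorem~\ref{thm:sss-fio-lp}, which applies by Lemma~\ref{lem:radon-fio}: the localized $\mathcal R,\mathcal R^*$ are elliptic, and their projections $\Pi_{Y\times X}$ have maximal rank $2d-1$ (the incidence relation $b=w\cdot x$ is a hypersurface). At $p=2$ both statements are trivial, because $\mathcal RL^2_k(\Omega)=H^{s_k}(\Omega)$ by Corollary~\ref{cor:quotient-norm-equiv-sobolev}, so the interesting case is $p\neq2$.

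\emph{No right improvement.} Suppose \eqref{eq:no-right-improvement} holds. Fix $R_0<R_1<R$ and a cutoff $\eta\in C_c^\infty$ supported in a small ball inside $\Omega$. Take $\Phi\in C_c^\infty(\Sph^{d-1}\times(-R_1,R_1))$ and set $g:=\partial_b^{k+1}\Phi$, which lies in $L^p(\MR)$. Integrating by parts $k+1$ times with $\partial_t^{k+1}\sigma_k=\delta_0$, and noting that there are no boundary terms by compact support, gives $\sigma_k*_b g=\Phi$. Hence $\mathrm S^k_R g=\mathcal R^*\Phi$ on $\Omega$. The assumed embedding then yields
\begin{align*}
\|\eta\,\mathcal R^*\Phi\|_{W^{r_{k,p}+\varepsilon,p}}\le C\|\partial_b^{k+1}\Phi\|_{L^p}\le C\|\Phi\|_{W^{k+1,p}(Y)} .
\end{align*}
Since $r_{k,p}=k+1+\rho_p$, this says that the localized $\mathcal R^*$ gains $\rho_p+\varepsilon=\frac{d-1}{2}-(\delta_p-\varepsilon)$ derivatives from $W^{k+1,p}$, i.e.\ it loses strictly less than $\delta_p$. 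This contradicts the sharpness statement of Theorem~\ref{thm:sss-fio-lp} with $s=k+1$, $\mu=-\frac{d-1}{2}$. Sharpness is local, so the counterexamples can be placed in the support of $\eta$ and $\Phi$; this is where I will invoke \cite{seeger1991regularity,ruzhansky_SharpnessSeegerSoggeSteinOrders_1999}.

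\emph{No left improvement.} Suppose \eqref{eq:no-left-improvement} holds. Then $\mathrm S^k_R:L^p(\MR)\to W^{s_{k,p}-\varepsilon,p}(\Omega)$ is onto, and the open mapping theorem makes its adjoint bounded below:
\begin{align*}
\|h\|_{(W^{s_{k,p}-\varepsilon,p}(\Omega))^*}\le C\,\|(\mathrm S^k_R)^*h\|_{L^{p'}(\MR)},\qquad (\mathrm S^k_R)^*h(w,b)=\bigl(\sigma_k(-\,\cdot)*_b\mathcal Rh\bigr)(w,b).
\end{align*}
Take $f\in C_c^\infty(B)$ with $B\Subset\Omega$ a small ball, and test with $h:=P f$. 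Here $P$ is a constant-coefficient operator of order $k+1$ chosen so that $\mathcal R(Pf)=\partial_b^{k+1}\mathcal Rf$ up to a harmless factor. For $k+1$ even, take $P=\Delta^{(k+1)/2}$ and use \eqref{eq:radon-laplace-commute}. For $k+1$ odd, use $\Delta^{k/2}$ and keep one extra $\partial_b$, which costs one order in the estimate and is repaired by ellipticity. Then $(\mathrm S^k_R)^*h=\pm\mathcal Rf$ on $\MR$, again with no boundary terms. Using local ellipticity of $P$ on $B$ modulo smoothing errors, controlled by Lemma~\ref{lem:radon-lp-mapping}, the left side dominates $\|f\|_{W^{k+1-s_{k,p}+\varepsilon,p'}}$. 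We obtain
\begin{align*}
\|f\|_{W^{-\frac{d-1}{2}-\delta_p+\varepsilon,\,p'}}\le C\|\mathcal Rf\|_{L^{p'}(Y)}+C\|f\|_{W^{-N,p'}} .
\end{align*}
Now $\delta_{p'}=\delta_p$, and we rewrite $f=c_d\,\mathcal R^*\Lambda^{d-1}(\mathcal Rf)$ by \eqref{eq:fbp}. The inequality then says that the elliptic FIO $c_d\mathcal R^*\Lambda^{d-1}$, of order $\frac{d-1}{2}$ with canonical relation $\mathcal C_{\mathcal R}^t$, maps $L^{p'}$ data of the form $\mathcal Rf$ with loss $\delta_{p'}-\varepsilon$. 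Since $\mathcal R$ is elliptic with canonical-graph branches, its range microlocally contains all even, antipodally symmetric data near the relevant set. I will therefore symmetrize the Seeger--Sogge--Stein extremizers (focusing/Knapp-type examples on one branch) and realize them as $\mathcal Rf$ with $f$ supported in $B$, up to smoothing errors; this contradicts the sharp loss $\delta_{p'}$.

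The main obstacle is this last realization step: showing the sharpness examples for $\mathcal R^*\Lambda^{d-1}$ can be taken in the range of $\mathcal R$ over compactly supported $f$ in $\Omega$. A secondary issue is absorbing the low-frequency and smoothing remainders. I would handle the range issue microlocally: compose with a parametrix of $\mathcal R$ on one branch and add the antipodal reflection. The remainders are lower order and absorbed as in the proof of Lemma~\ref{lem:inverse-fio-pair}.
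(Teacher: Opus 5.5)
\textbf{Right half: correct, and simpler than the paper's.} The identity $\mathrm S^k_R(\partial_b^{k+1}\Phi)=\mathcal R^*\Phi$ for $\Phi\in C_c^\infty(\Sph^{d-1}\times(-R_1,R_1))$ lets you apply the sharpness part of Theorem~\ref{thm:sss-fio-lp} to $\mathcal R^*$ itself at $s=k+1$. The paper instead applies it to $\chi\,\mathcal R^*m_S(D_b)Q_Y$. That route needs Lemma~\ref{lem:inverse-fio-pair} (the synthesis map is an elliptic FIO of order $-s_k$) and a microlocal cutoff $Q_Y$ to make the truncation invisible. Your compact $b$-support does that job for free.

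\textbf{Left half: not proved.} The step you call the main obstacle is the decisive one, and what you give there is only a plan. Your reduction to
\[
\|f\|_{W^{-\frac{d-1}{2}-\delta_p+\varepsilon,p'}}\le C\|\mathcal Rf\|_{L^{p'}(Y)}+C\|f\|_{W^{-N,p'}},\qquad f\in C_c^\infty(B),
\]
is correct. But you still have to exhibit $f$'s that violate it. The sketch ``symmetrize, realize in the range of $\mathcal R$, remainders are lower order'' hides the one nontrivial input.

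That input is a loss-free $L^{p'}$ bound for $\mathcal R$ composed with a cut-off, single-branch back-projection. Take $Q_+\in\Psi^0(Y)$ selecting $\tau>0$ near a point over $B$, and $\chi\in C_c^\infty(B)$. The operator $\mathcal R\,\chi\,\mathcal R^*\Lambda^{d-1}Q_+$ has order $0$. Its canonical relation is $\mathcal C_{\mathcal R}\circ\mathcal C_{\mathcal R}^t$ restricted to one branch: the diagonal together with the graph of the antipodal lift. So it is a $\Psi^0$ operator plus a $\Psi^0$ operator composed with the pullback by $(w,b)\mapsto(-w,-b)$, and hence bounded on $L^{p'}(Y)$.

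This is not a lower-order remainder: a generic order-zero FIO loses $\delta_{p'}$. It is the same composition fact the paper uses to bound $B=\psi\,\mathrm A^k\eta\,\mathrm S^k_R$.

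Once you have it, no realization in the range of $\mathcal R$ is needed. Let $T:=\chi\,c_d\mathcal R^*\Lambda^{d-1}Q_+$, an elliptic single-branch FIO of order $\frac{d-1}{2}$ with maximal projection rank. Apply your inequality to $f=T\Phi$ for arbitrary smooth $\Phi$. The remainder term is controlled by the Seeger--Sogge--Stein upper bound for $T$. You then get $\|T\Phi\|_{W^{-\frac{d-1}{2}-\delta_{p'}+\varepsilon,p'}}\lesssim\|\Phi\|_{L^{p'}}$, which the sharpness part forbids since $\delta_{p'}=\delta_p$.

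The paper avoids dualizing altogether. From $v=\eta\,\mathrm S^k_Rh$ and the boundedness of $B$, it obtains a forward bound for $\psi\,\mathrm A^kQ_X$ on arbitrary inputs, so no range question ever arises.

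\textbf{Two smaller repairs.}

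First, $\mathrm S^k_R$ does not map $L^p(\MR)$ into $W^{s_{k,p}-\varepsilon,p}(\Omega)$ (by your own right half, when $\varepsilon<2\delta_p$). So ``onto plus open mapping'' is not a valid justification. Your adjoint inequality follows directly from the quotient norm. Write $\langle u,h\rangle=\langle g,(\mathrm S^k_R)^*h\rangle$ with $\|g\|_{L^p}\le 2C\|u\|_{W^{s_{k,p}-\varepsilon,p}(\Omega)}$, and use $\|h\|_{(W^{\sigma,p}(\Omega))^*}\ge\|h\|_{W^{-\sigma,p'}(\R^d)}$ for $h$ supported in $\Omega$.

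Second, for $k$ even no constant-coefficient $P$ gives $\mathcal R(Pf)=\partial_b^{k+1}\mathcal Rf$. The right side is odd under the antipode, while $\mathcal R(Pf)$ is even. Test instead with $h_\alpha:=\partial^\alpha f$, $|\alpha|=k+1$, for which $(\mathrm S^k_R)^*h_\alpha=(-1)^{k+1}w^\alpha\,\mathcal Rf$. Then sum over $\alpha$ using ellipticity of $(\partial^\alpha)_{|\alpha|=k+1}$; this handles both parities at once.
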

\begin{remark}
Since both spaces are Banach spaces continuously embedded in
$L^p(\Omega)$, any set inclusion between them would have a closed
graph and hence would be continuous by the closed graph theorem. 
Thus the failure of the
continuous embeddings above also exclude the corresponding set
inclusions.
\end{remark}

\begin{proof}
For $p=2$ this is the non-improvability of the Sobolev scale, since
$\mathcal{R}L^2_k(\Omega)=H^{s_k}(\Omega)$ with equivalent norms
(Corollary~\ref{cor:quotient-norm-equiv-sobolev}).  Let $p\neq2$.  By
Section~\ref{app:radon-fio-proof} the natural projection of
$\mathcal{C}_{\mathcal R}$ has maximal rank $2d-1$ at every point of each branch, and by
Lemma~\ref{lem:inverse-fio-pair} the analysis and synthesis maps are elliptic Fourier
integral operators of orders $s_k$ and $-s_k$ carrying those relations. The sharpness
part of Theorem~\ref{thm:sss-fio-lp} therefore applies to each.  Validity of
\eqref{eq:no-right-improvement} would produce the forbidden bound
$L^p\to W^{r_{k,p}+\varepsilon,p}$ for the synthesis operator, and validity of
\eqref{eq:no-left-improvement}, composed with the $L^p$-bounded operator
$\psi\,\mathrm{A}^k\eta\,\mathrm{S}^k_R$, the forbidden bound
$W^{s_{k,p}-\varepsilon,p}\to L^p$ for $\mathrm{A}^k$.  Both reductions, the choice of
cutoffs, and the transfer between the quotient norm and the Sobolev norms on $\Omega$
are carried out in Appendix~\ref{app:sharpness}.
\end{proof}

\begin{remark}
\label{rem:slobodeckij-sandwich}
We have developed the sandwich embedding for the Bessel potential Sobolev spaces $W^{s,p}$, 
which in the Littlewood--Paley scale is $F^{s}_{p,2}$. The Sobolev--Slobodeckij space has 
equivalent norm to the Besov space $B^{s}_{p,p}$ for $s>0$, $s\notin\mathbb{N}$
\cite{triebel_theoryfunctionspaces_1992}. Write
$W^{s,p}_{\mathrm{Sl}}(\Omega):=B^{s}_{p,p}(\Omega)$. Recall elementary embeddings~
\cite{edmunds_FunctionSpacesEntropy_1996, triebel_theoryfunctionspaces_1992,shi_NewEstimatesRychkovs_2024} 
\begin{align*}
    B^{s}_{p,\min(p,2)}\hookrightarrow F^{s}_{p,2}\hookrightarrow B^{s}_{p,\max(p,2)},
    \qquad
    B^{s+\varepsilon}_{p,q_{0}}\hookrightarrow B^{s}_{p,q_{1}}
    \quad(\varepsilon>0,\ 0<q_{0},q_{1}\le\infty).
\end{align*}
Inserting them into Theorem~\ref{the:radon-lp-sobolev-embedding} gives, 
for every $\varepsilon>0$,
\begin{align*}
    W^{s_{k,p}+\varepsilon,p}_{\mathrm{Sl}}(\Omega)
    \hookrightarrow\mathcal{R}L^{p}_{k}(\Omega)
    \hookrightarrow W^{r_{k,p}-\varepsilon,p}_{\mathrm{Sl}}(\Omega),
\end{align*}
where $\varepsilon$ may be taken to be $0$ on the left when $p\le2$ and on the right
when $p\ge2$, according to whether $\min(p,2)$ or $\max(p,2)$ equals $p$. Applying 
Proposition~\ref{prop:radon-lp-sandwich-sharp} with $\varepsilon/2$
in place of $\varepsilon$ shows that neither $s_{k,p}$ nor $r_{k,p}$ order can be moved 
by positive amounts.
\end{remark}

\section{High-probability approximation for
\texorpdfstring{$1<p<\infty$}{1<p<infty}}
\label{sec:approximation-radon-rf}
Using the above space theory, we now discretize the integral representation
and prove a high-probability approximation result.
We continue to use the notation of Sections~\ref{sec:radon-bv-sobolev}
and~\ref{sec:radon-lp-theory}: $\sigma_k(t)=t_+^k/k!$,
$s_k=\tfrac{d+2k+1}{2}$, $\MR=\Sph^{d-1}\times[-R,R]$,
$d\lambda(w,b)=d\sigma(w)\,db$, and $\Omega\subset\R^d$ is a bounded Lipschitz
domain with $R>\sup_{x\in\Omega}\abs x$ fixed. Write
\begin{align*}
    \phi^k_\theta(x):=\sigma_k(w\cdot x-b),
    \qquad\theta=(w,b)\in\MR,
\end{align*}
for the bounded-bias ridge dictionary, and let
\begin{align*}
    \mu_R:=\Lambda_R^{-1}\lambda,
    \qquad
    \Lambda_R:=\lambda(\MR)=2R\,\abs{\Sph^{d-1}},
\end{align*}
be the uniform probability measure on $\MR$. Recall that
\begin{align*}
    Q_{d,k}:=\dim\Pcal_k(\R^d)=\binom{d+k}{k},
    \qquad
    \gamma_{k,p}:=\frac{k+1/p}{d},
    \\
    \alpha_p:=\min\left\{\frac12,\frac1{p'}\right\}
    =1-\frac1{\min(p,2)},
    \qquad p':=\frac{p}{p-1}.
\end{align*}
Thus $\alpha_p=\tfrac12$ for $2\le p<\infty$, while
$\alpha_p=\tfrac1{p'}=1-\tfrac1p$ for $1<p<2$. In particular
$\gamma_{k,2}=\tfrac{2k+1}{2d}=\gamma_k$ and
$\alpha_2+\gamma_{k,2}=\tfrac12+\gamma_k=s_k/d$.

The space theory of Section~\ref{sec:radon-lp-theory} is valid on the full range
$1<p<\infty$. The sampling step, however, depends on the Rademacher type of
$L^p(\Omega)$, which equals $\min(p,2)$ with type constant depending only on $p$
\cite{ledoux_ProbabilityBanachSpaces_1991}. Accordingly, the theorem
below has the sampling exponent $\alpha_p=1-1/\min(p,2)$. For $p\ge2$ this is the
usual Hilbertian Monte-Carlo exponent $\tfrac12$; for $1<p<2$ it becomes the
type-$p$ exponent $\tfrac1{p'}$. We write
$\mathcal{R}L^p_k(\Omega)$ for the Radon-domain $L^p$ space of order $k$ of
Definition~\ref{def:radon-lp-representation-space}; recall that for $p=2$ one has
$\mathcal{R}L^2_k(\Omega)=H^{s_k}(\Omega)$ with equivalent norms
(Corollary~\ref{cor:quotient-norm-equiv-sobolev}). The result of this section
discretizes the bounded-bias decomposition of Sections~\ref{sec:radon-bv-sobolev}
and~\ref{sec:radon-lp-theory} into a mixed feature model: $\mathcal O(n)$ deterministic
neurons together with $n$ i.i.d.\ uniform neurons achieve the rate
$n^{-\alpha_p-\gamma_{k,p}}$ with high probability. For $p=2$ this is the optimal
$n^{-s_k/d}$. Remark~\ref{rem:lp-rate-comparison} is a comparison with the Sobolev 
exponent for $p\neq2$.

\begin{theorem}
\label{thm:high-prob-random-feature}
There exist $L,C>0$, depending only on $\Omega,R,d,k,p$, such that for every
$n\in\N$ there are deterministic points $\zeta_1,\dots,\zeta_N\in\MR$ with
$N\le Ln$, depending only on $n$, with the following property. Let
$\Theta_1,\dots,\Theta_n\overset{\mathrm{iid}}\sim\mu_R$, let
$u\in\mathcal{R}L^p_k(\Omega)$, and let $\delta\in(0,1)$. Then there exist
coefficients $(a_i)_{i\le N}$ and $(b_j)_{j\le n}$, depending on $u$, $\delta$,
and the samples, such that
\begin{align}
    \Big\|u-\sum_{i=1}^N a_i\,\phi^k_{\zeta_i}-\sum_{j=1}^n b_j\,\phi^k_{\Theta_j}
    \Big\|_{L^p(\Omega)}
    \le
    C\bigl(\log(2/\delta)\bigr)^{\alpha_p}
    \norm{u}_{\mathcal{R}L^p_k(\Omega)}\
    n^{-\alpha_p-\gamma_{k,p}}
    \label{eq:lp-rf-bound-Bkp}
\end{align}
holds with probability at least $1-\delta$.
\end{theorem}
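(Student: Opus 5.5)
Fix $u\in\mathcal{R}L^p_k(\Omega)$ and a near-optimal density $g\in L^p(\MR)$ with $\mathrm{S}^k_R g=u$ on $\Omega$ and $\norm{g}_{L^p(\MR)}\le 2\norm{u}_{\mathcal{R}L^p_k(\Omega)}$. The plan is the stratified Monte Carlo ("skeleton plus sampling") scheme. Partition $\MR$ into $n$ cells $\{D_j\}_{j\le n}$ of equal $\lambda$-measure $\Lambda_R/n$ and diameter $\lesssim n^{-1/d}$, obtained from a product of a quasi-uniform partition of $\Sph^{d-1}$ into $\sim n^{(d-1)/d}$ caps and of $[-R,R]$ into $\sim n^{1/d}$ intervals. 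On each cell, the map $\theta\mapsto\phi^k_\theta(x)$ is approximated by a local interpolant of degree $\le k$ in the parameters. The interpolation nodes in all cells form the deterministic skeleton $\{\zeta_i\}$, with $N\le Ln$ and $L\sim\dim\mathcal P_k(\R^d)$ per cell, independent of $u$. Writing $\phi^k_\theta=I_j\phi^k_\theta+E_j(\theta,\cdot)$ for $\theta\in D_j$, the skeleton part $\sum_j\int_{D_j}I_j\phi^k_\theta\,g\,d\lambda$ is an exact linear combination of the $\phi^k_{\zeta_i}$, which fixes the coefficients $a_i$. It remains to sample the remainder $\int_{\MR}E(\theta,\cdot)g(\theta)\,d\lambda(\theta)$.

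The key deterministic estimate is the size of the remainder kernel. Since $\sigma_k$ is $C^{k-1,1}$, and is in fact a polynomial except where $w\cdot x=b$, the Taylor/interpolation error satisfies $|E(\theta,x)|\lesssim n^{-k/d}$ everywhere. It is moreover $\lesssim n^{-(k+1)/d}$ unless $x$ lies in the slab $\{|w\cdot x-b|\lesssim n^{-1/d}\}$ for some $\theta\in D_j$, a set of measure $\lesssim n^{-1/d}$ in $x$ (or in $b$, for fixed $x$). Consequently $\sup_{\theta}\norm{E(\theta,\cdot)}_{L^p(\Omega)}\lesssim n^{-k/d}\cdot n^{-1/(pd)}=n^{-\gamma_{k,p}}$, which explains the exponent $\gamma_{k,p}=(k+1/p)/d$.

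For the sampling, I would use importance weights that respect the stratification. Drawing $\Theta_j\sim\mu_R$ i.i.d. (not stratified), the unbiased estimator is $Z_j:=\Lambda_R\,E(\Theta_j,\cdot)g(\Theta_j)$ with mean $\int E g\,d\lambda$, and I set $b_j:=\Lambda_R g(\Theta_j)/n$ minus the corresponding skeleton correction of $\Theta_j$'s cell. That correction is again a combination of the $\phi^k_{\zeta_i}$ and can be absorbed into the $a_i$, which are allowed to depend on the samples. The error $\frac1n\sum_j(Z_j-\mathbb E Z_j)$ is then controlled in $L^p(\Omega)$, a space of Rademacher type $\min(p,2)$. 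By symmetrization, $\mathbb E\norm{\cdot}\lesssim n^{-1+1/\min(p,2)}(\mathbb E\norm{Z}^{\min(p,2)})^{1/\min(p,2)}$, and $\norm{Z}_{L^p}\le\Lambda_R|g(\Theta)|\,n^{-\gamma_{k,p}}$, so the moment is bounded by $\norm{g}_{L^p}$ whenever $\min(p,2)\le p$, which always holds. High probability would follow from a Pinelis/Yurinskii-type martingale inequality in type-$\min(p,2)$ spaces or from a moment bound on $\log(2/\delta)$ power moments. This produces the factor $(\log(2/\delta))^{\alpha_p}$ and the rate $n^{-\alpha_p-\gamma_{k,p}}$.

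The main obstacle is the concentration step. The summands are not bounded, since $g\in L^p$ only, so a bounded-differences inequality does not apply directly. I would first try to pass to Marcinkiewicz–Zygmund moment bounds of order $q\sim\log(2/\delta)$ and to truncate $g$ at level $\sim n^{1/p}$, handling the truncated tail deterministically through $\norm{g\mathbf 1_{|g|>t}}$. If the clean exponent $\alpha_p$ on the log fails, I would accept a median-of-means modification, which only affects constants. A secondary technical point is making the slab estimate for $E$ uniform over the cells near the boundary $|b|=R$; this is handled by restricting to $R>R_0$ so that $\Omega$ stays away from the edge.
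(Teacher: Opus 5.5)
Your plan is essentially the paper's proof: a near-optimal density $g$, a deterministic Lagrange-interpolation skeleton in the parameter $\theta$ with uniform residual $\sup_\theta\|\psi^k_\theta\|_{L^p(\Omega)}\lesssim n^{-\gamma_{k,p}}$, and i.i.d.\ uniform sampling of $\Lambda_R\,g(\theta)\psi^k_\theta$ controlled by the type-$\min(p,2)$ moment bound; your pointwise slab estimate is the same computation that underlies the paper's $C^{k,1/p}(\mathcal C;L^p(\Omega))$ lemma and its Banach-valued Bramble--Hilbert bound. Two points need tightening. First, median-of-means is the mechanism that gives the $(\log(2/\delta))^{\alpha_p}$ factor, not just a fallback: with $g$ only in $L^p$ the summands have no moments of order $\log(2/\delta)$, so the paper splits the samples into $\lceil 8\log(2/\delta)\rceil$ blocks, applies Markov within each block and Hoeffding across blocks, and selects a block by minimal median distance. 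Second, because that step needs $n\ge16\log(2/\delta)$, you must handle the regime $n<16\log(2/\delta)$ separately with the skeleton alone, using $n^{-\gamma_{k,p}}\le(16\log(2/\delta))^{\alpha_p}n^{-\alpha_p-\gamma_{k,p}}$.
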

\begin{remark}
    \label{rem:per-target-statement}
    In Theorem~\ref{thm:high-prob-random-feature} the target $u$ is fixed before the
    neurons $\Theta_j$ are drawn, so the event on which \eqref{eq:lp-rf-bound-Bkp}
    holds may depend on $u$, though the outer parameters can be solved by convex optimization
    with fixed inner parameters. The theorem therefore produces, for each $u$, an
    approximant from $\mathcal{O}(n)$ elements of the dictionary
    $\{\phi^k_\theta\}_{\theta\in\mathcal{M}_R}$, and not a single linear subspace of dimension
    $\mathcal{O}(n)$ serving the whole unit ball. 
\end{remark}
\begin{corollary}
\label{cor:rf-sobolev}
Assume that $\Omega$ admits a bounded compactly supported
$W^{s_{k,p},p}$-extension operator, as in
Theorem~\ref{the:radon-lp-sobolev-embedding}. Then every
$u\in W^{s_{k,p},p}(\Omega)$ satisfies
\begin{align}
    \Big\|u-\sum_{i=1}^N a_i\,\phi^k_{\zeta_i}-\sum_{j=1}^n b_j\,\phi^k_{\Theta_j}
    \Big\|_{L^p(\Omega)}
    \le
    C\bigl(\log(2/\delta)\bigr)^{\alpha_p}
    \norm{u}_{W^{s_{k,p},p}(\Omega)}\,
    n^{-\alpha_p-\gamma_{k,p}}
    \label{eq:lp-rf-bound-sobolev}
\end{align}
holds with probability at least $1-\delta$.
In the Hilbert case $p=2$ one has $\mathcal{R}L^2_k(\Omega)=H^{s_k}(\Omega)$,
$\gamma_{k,2}=\gamma_k$, and $\alpha_2+\gamma_k=s_k/d$, so
\eqref{eq:lp-rf-bound-Bkp} is that for every $u\in H^{s_k}(\Omega)$,
\begin{align}
    \Big\|u-\sum_{i=1}^N a_i\,\phi^k_{\zeta_i}-\sum_{j=1}^n b_j\,\phi^k_{\Theta_j}
    \Big\|_{L^2(\Omega)}
    \le
    C\sqrt{\log(2/\delta)}\,\norm{u}_{H^{s_k}(\Omega)}\,n^{-s_k/d}
    \label{eq:main-rf-bound}
\end{align}
holds with probability at least $1-\delta$.
\end{corollary}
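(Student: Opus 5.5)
The corollary is a direct composition of Theorem~\ref{thm:high-prob-random-feature} with the lower sandwich embedding \eqref{eq:sobolev-to-Bkp} of Theorem~\ref{the:radon-lp-sobolev-embedding}.

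\emph{Step 1 (membership and norm transfer).} Take $u\in W^{s_{k,p},p}(\Omega)$. The assumed bounded, compactly supported extension operator is exactly the hypothesis under which \eqref{eq:sobolev-to-Bkp} was proved: extend, approximate by $C_c^\infty(B_A(0))$, and pass to the limit. Hence $u\in\mathcal{R}L^p_k(\Omega)$ with
\begin{align*}
\norm{u}_{\mathcal{R}L^p_k(\Omega)}\le C\norm{u}_{W^{s_{k,p},p}(\Omega)}.
\end{align*}

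\emph{Step 2 (apply the approximation theorem).} Fix $n$ and $\delta$, and let $\zeta_1,\dots,\zeta_N$ be the $u$-independent points from Theorem~\ref{thm:high-prob-random-feature}. Applying \eqref{eq:lp-rf-bound-Bkp} to this $u$ gives coefficients $(a_i)$ and $(b_j)$ such that, with probability at least $1-\delta$, the error is at most $C(\log(2/\delta))^{\alpha_p}\norm{u}_{\mathcal{R}L^p_k(\Omega)}n^{-\alpha_p-\gamma_{k,p}}$. Substituting the bound from Step 1 yields \eqref{eq:lp-rf-bound-sobolev}, with a constant that still depends only on $\Omega,R,d,k,p$. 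Since the probability event comes from the theorem applied to the fixed $u$, no uniformity over $u$ is needed, in line with Remark~\ref{rem:per-target-statement}.

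\emph{Step 3 (Hilbert case).} Set $p=2$. Then $\delta_2=0$ and $s_{k,2}=s_k$, so $W^{s_{k,2},2}=H^{s_k}$; the norm equivalence also follows independently from Corollary~\ref{cor:quotient-norm-equiv-sobolev}. The exponents are $\alpha_2=\tfrac12$ and $\gamma_{k,2}=\tfrac{2k+1}{2d}$, so
\begin{align*}
\alpha_2+\gamma_{k,2}=\frac{d+2k+1}{2d}=\frac{s_k}{d},\qquad (\log(2/\delta))^{\alpha_2}=\sqrt{\log(2/\delta)}.
\end{align*}
This is \eqref{eq:main-rf-bound}.

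There is no real obstacle. The only point to check is that the extension hypothesis stated in the corollary matches the one used in Theorem~\ref{the:radon-lp-sobolev-embedding}, namely compact support inside a fixed ball $B_A(0)$ together with boundedness in $W^{s_{k,p},p}$. This ensures the embedding constant does not depend on $u$.
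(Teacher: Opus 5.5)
Your proof is correct and follows the paper's argument: the lower sandwich embedding \eqref{eq:sobolev-to-Bkp} replaces $\norm{u}_{\mathcal{R}L^p_k(\Omega)}$ in \eqref{eq:lp-rf-bound-Bkp} by $C\norm{u}_{W^{s_{k,p},p}(\Omega)}$. For $p=2$, the paper cites Corollary~\ref{cor:quotient-norm-equiv-sobolev} rather than specializing $s_{k,2}=s_k$; both routes work, and the exponent identity $\alpha_2+\gamma_{k,2}=s_k/d$ then gives \eqref{eq:main-rf-bound}.
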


\begin{proof}
By Theorem~\ref{the:radon-lp-sobolev-embedding},
$\norm{u}_{\mathcal{R}L^p_k(\Omega)}\le C\norm{u}_{W^{s_{k,p},p}(\Omega)}$, so
\eqref{eq:lp-rf-bound-sobolev} follows from \eqref{eq:lp-rf-bound-Bkp}. For
$p=2$, $\mathcal{R}L^2_k(\Omega)=H^{s_k}(\Omega)$ with equivalent norms by
Corollary~\ref{cor:quotient-norm-equiv-sobolev}, and
$\alpha_2+\gamma_k=s_k/d$ gives \eqref{eq:main-rf-bound}.
\end{proof}

The proof of Theorem~\ref{thm:high-prob-random-feature} combines two ingredients.
First, a \emph{deterministic skeleton}
(Proposition~\ref{prop:deterministic-skeleton}): piecewise Lagrange interpolation
in the parameter $\theta$ approximates every dictionary element by $\mathcal O(n)$ fixed
neurons with uniform $L^p(\Omega)$ error $\mathcal O(n^{-\gamma_{k,p}})$, the exponent
$\gamma_{k,p}=(k+\tfrac1p)/d$ reflecting the $C^{k,1/p}$ parameter smoothness of
the dictionary. Second, a \emph{variance-reduced sampling lemma} in the type
$\min(p,2)$ space $L^p(\Omega)$
(Lemma~\ref{lem:high-prob-uniform-hilbert}), which discretizes the residual
integral by the $n$ random neurons and contributes the sampling factor
$n^{-\alpha_p}$. Multiplying the two rates gives
$n^{-\alpha_p-\gamma_{k,p}}$; since
$\mathcal{R}L^p_k(\Omega)=\mathrm{S}^k_R(L^p(\MR))$ already absorbs the polynomial
part, no separate polynomial term has to be carried. The skeleton-plus-sampling
strategy is in the spirit of~\cite{siegel_sharpboundsapproximation_2024}.

\subsection{A deterministic skeleton on the parameter cylinder}

For a Banach space $H$, a cube $Q\subset\R^d$, and $\alpha\in(0,1]$, let
$C^{k,\alpha}(Q;H)$ denote the maps $F:Q\to H$ whose $H$-valued Fr\'echet
derivatives up to order $k$ exist and are bounded on $Q$, equipped with the
seminorm
\begin{align*}
    \abs{F}_{C^{k,\alpha}(Q;H)}
    :=\max_{\abs\beta=k}\,
    \sup_{\eta\ne\eta'\in Q}
    \frac{\norm{D^\beta F(\eta)-D^\beta F(\eta')}_H}{\abs{\eta-\eta'}^\alpha}.
\end{align*}
Using the smoothness of the parameters, we have
\begin{lemma}
\label{lem:local-parameter-smoothness}
Let $1<p<\infty$, $\Ccal:=[-1,1]^d$, and let $\varphi:\Ccal\to\MR$ be a smooth
chart. Then $P(\eta):=\phi^k_{\varphi(\eta)}$ belongs to
$C^{k,1/p}(\Ccal;L^p(\Omega))$, with $\abs{P}_{C^{k,1/p}}$ and
$\max_{\abs\beta\le k}\sup_\Ccal\norm{D^\beta P}_{L^p(\Omega)}$ bounded by a
constant depending only on $\Omega$, $R$, $d$, $k$, $p$, and the chart.
\end{lemma}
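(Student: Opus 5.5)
The plan is to reduce the statement to a one-parameter computation for the profile $\sigma_k$ and then transport it through the smooth chart $\varphi$. Write $\varphi(\eta)=(w(\eta),b(\eta))$ and set $t(\eta,x):=w(\eta)\cdot x-b(\eta)$. Since $\overline\Omega$ is compact and $\varphi$ is smooth on the compact cube $\Ccal$, the map $t$ is $C^\infty$ in $\eta$, with all $\eta$-derivatives bounded uniformly in $(\eta,x)\in\Ccal\times\overline\Omega$. It is also bounded: $\abs{t}\le R_0+R$. Formally, the chain rule (Fa\`a di Bruno) gives
\begin{align*}
    D^\beta_\eta\,\sigma_k(t(\eta,x))
    =\sum_{j=1}^{\abs\beta}\sigma_{k-j}(t(\eta,x))\,\Pi_{\beta,j}(\eta,x),
\end{align*}
where the $\Pi_{\beta,j}$ are polynomials in the $\eta$-derivatives of $t$, hence smooth and uniformly bounded. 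For $\abs\beta\le k$ only $\sigma_{k-j}$ with $j\le k$ appears. These are bounded on $[-R_0-R,R_0+R]$, and for $j<k$ they are Lipschitz.

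First I would justify that these formal derivatives are Fr\'echet derivatives in $L^p(\Omega)$ for $\abs\beta\le k$. By induction on $\abs\beta$, take the pointwise derivative and show that the difference quotients converge in $L^p(\Omega)$. When the next derivative involves only $\sigma_{k-j}$ with $k-j\ge1$, which are $C^1$, the pointwise convergence is dominated and therefore converges in $L^p$. At the top level $\abs\beta=k$ we need no further derivative, only the H\"older estimate. In the process we obtain uniform bounds on $\sup_\Ccal\norm{D^\beta P}_{L^p(\Omega)}$ from the sup bounds on $\sigma_{k-j}$ and $\Pi_{\beta,j}$, together with $\abs\Omega<\infty$.

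The key step is the $C^{0,1/p}$ estimate for $D^\beta P$ with $\abs\beta=k$. Write $D^\beta P(\eta)-D^\beta P(\eta')$ as a sum of terms of the form $[\sigma_{k-j}(t)-\sigma_{k-j}(t')]\,\Pi+\sigma_{k-j}(t')\,[\Pi-\Pi']$, where $t=t(\eta,\cdot)$ and $t'=t(\eta',\cdot)$.

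All terms with $j<k$, and all terms containing $\Pi-\Pi'$, are Lipschitz in $\eta$ uniformly in $x$, so they contribute $O(\abs{\eta-\eta'})\le O(\abs{\eta-\eta'}^{1/p})$ on the bounded cube.

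The only non-Lipschitz term is the one with $j=k$, namely $\sigma_0=\mathbf 1_{(0,\infty)}$, which is the main obstacle. Here $\abs{\sigma_0(t)-\sigma_0(t')}$ is an indicator of the set of $x\in\Omega$ lying between the two hyperplanes $\{w\cdot x=b\}$ and $\{w'\cdot x=b'\}$. I would bound it pointwise by the indicator of the slab $\{x\in\Omega:\abs{w\cdot x-b}\le\abs{t(\eta,x)-t(\eta',x)}\}$. Since $\abs{t(\eta,x)-t(\eta',x)}\le C\abs{\eta-\eta'}$ uniformly on $\overline\Omega$, this set lies in a slab of width $2C\abs{\eta-\eta'}$ around a hyperplane. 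Its intersection with the bounded set $\Omega$ has measure at most $C'\abs{\eta-\eta'}$, because $\abs w=1$ and $\Omega\subset B_{R_0}$. Taking the $L^p$ norm of an indicator then gives $(C'\abs{\eta-\eta'})^{1/p}$, which is exactly the exponent $1/p$.

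The case $k=0$ is the same argument with $\beta=0$. All the constants produced depend only on $\Omega,R,d,k,p$ and the $C^{k+1}$ norms of the chart, which completes the plan.
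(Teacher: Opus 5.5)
Your proposal is correct and follows essentially the same route as the paper: the Fa\`a di Bruno expansion of $D^\beta_\eta\sigma_k(t)$, dominated convergence to obtain the $L^p(\Omega)$-valued derivatives, Lipschitz control of every term except the $\sigma_0$ term, and the slab estimate $\abs{t(\eta,x)}\le\abs{t(\eta,x)-t(\eta',x)}\le C\abs{\eta-\eta'}$, which yields the exponent $1/p$. One small correction: $\sigma_1=t_+$ is only Lipschitz, not $C^1$, so the step that produces the order-$k$ derivatives needs the fact, noted in the paper, that the nondifferentiability set $\{x\in\Omega:t(\eta,x)=0\}$ is a hyperplane slice of measure zero; the Lipschitz bound then supplies the dominating function.
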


\begin{proof}
Write $\varphi(\eta)=(w(\eta),b(\eta))$ and $t(x,\eta):=w(\eta)\cdot x-b(\eta)$.
Since $\sigma_k^{(j)}=\sigma_{k-j}$ for $0\le j\le k$, the chain rule gives, for
$1\le\abs\beta\le k$,
\begin{align}
    D^\beta_\eta\,\sigma_k(t(x,\eta))
    =\sum_{j=1}^{\abs\beta} q_{\beta,j}(x,\eta)\,\sigma_{k-j}(t(x,\eta)),
    \label{eq:faa-di-bruno-ridge}
\end{align}
where each $q_{\beta,j}$ is a polynomial in $x$ with coefficients smooth in
$\eta$; all these factors, and the truncated powers $\sigma_{k-j}$ with
$0\le k-j\le k$, are bounded on $\Omega\times\Ccal$. Difference quotients of the
right-hand side converge in $L^p(\Omega)$ by dominated convergence (the
exceptional set $\{x\in\Omega:t(x,\eta)=0\}$ is a hyperplane slice, of measure
zero), so $D^\beta P$ exists as an $L^p(\Omega)$-valued derivative, is given by
\eqref{eq:faa-di-bruno-ridge}, and is bounded.

For the H\"older bound, fix $\abs\beta=k$. When $k=0$, this means estimating
$P(\eta)=\mathbf 1_{\{t(\cdot,\eta)>0\}}$ itself, and the slab estimate below
directly gives the required $1/p$-H\"older continuity. Assume now $k\ge1$. In
\eqref{eq:faa-di-bruno-ridge} every term with $j\le k-1$ involves
$\sigma_{k-j}$ with $k-j\ge1$ and is therefore Lipschitz in $\eta$, uniformly over
$x\in\Omega$. The only term requiring care is $j=k$, which contains the indicator
$\sigma_0(t)=\mathbf 1_{\{t>0\}}$. On the symmetric difference
$\{t(\cdot,\eta)>0\}\,\triangle\,\{t(\cdot,\eta')>0\}$ one has
$\abs{t(x,\eta)}\le\abs{t(x,\eta)-t(x,\eta')}\le C\abs{\eta-\eta'}$ for
$x\in\Omega$; since a slab of width $\varepsilon$ meets the bounded set $\Omega$
in measure $O(\varepsilon)$,
\begin{align*}
    &\norm{\mathbf 1_{\{t(\cdot,\eta)>0\}}-\mathbf 1_{\{t(\cdot,\eta')>0\}}}_{L^p(\Omega)}^p \\
    \le &\big|\{x\in\Omega:\abs{w(\eta)\cdot x-b(\eta)}\le C\abs{\eta-\eta'}\}\big| \\
    \le &C'\abs{\eta-\eta'}.
\end{align*}
Thus the indicator term is $\tfrac1p$-H\"older in $L^p(\Omega)$. Combining the
Lipschitz factors with this $\tfrac1p$-H\"older factor yields
$\abs{P}_{C^{k,1/p}(\Ccal;L^p(\Omega))}<\infty$.
\end{proof}
We need a Function-valued Bramble--Hilbert estimate as following.
\begin{lemma}
\label{lem:banach-bramble-hilbert}
Let $\mathcal{C}=[-1,1]^d$ be the reference cube and let
$y_1,\dots,y_{Q_{d,k}}\in\mathcal{C}$ be unisolvent for $\mathcal{P}_k(\mathbb{R}^d)$,
with associated Lagrange basis
$\ell_1,\dots,\ell_{Q_{d,k}}\in\mathcal{P}_k(\mathbb{R}^d)$.Let $Q\subset\R^d$ be a cube of side $h$ with affine map $A_Q:\Ccal\to Q$, and transport the nodes and basis by
$y_i^Q:=A_Q(y_i)$, $\ell_i^Q:=\ell_i\circ A_Q^{-1}$. Then for every Banach space
$H$, every $\alpha\in(0,1]$, and every $F\in C^{k,\alpha}(Q;H)$,
\begin{align}
    \sup_{\eta\in Q}
    \Big\|F(\eta)-\sum_{i=1}^{Q_{d,k}}\ell_i^Q(\eta)\,F(y_i^Q)\Big\|_H
    \le
    C\,h^{k+\alpha}\,\abs{F}_{C^{k,\alpha}(Q;H)},
    \label{eq:banach-BH}
\end{align}
where $C$ depends only on $d$, $k$, $\alpha$, and the reference nodes.
\end{lemma}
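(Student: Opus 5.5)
The plan is to reduce to the reference cube $\Ccal$ by scaling, and there to prove the estimate by comparing $F$ with its vector-valued Taylor polynomial, using that Lagrange interpolation reproduces $\Pcal_k$. Set $G:=F\circ A_Q$ on $\Ccal$. Since $A_Q$ is affine with linear part of norm $\sim h$, the chain rule gives $D^\beta G=h^{\abs\beta}(D^\beta F)\circ A_Q$ up to a rotation-free diagonal scaling, hence $\abs{G}_{C^{k,\alpha}(\Ccal;H)}\le C h^{k+\alpha}\abs{F}_{C^{k,\alpha}(Q;H)}$. Moreover $\sum_i\ell_i^Q(\eta)F(y_i^Q)=\sum_i\ell_i(\xi)G(y_i)$ with $\xi=A_Q^{-1}\eta$, so it suffices to prove \eqref{eq:banach-BH} on $\Ccal$ with $h=1$.

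On $\Ccal$, let $I_kG(\xi):=\sum_i\ell_i(\xi)G(y_i)$. Since $I_k$ acts linearly on $H$-valued maps and reproduces $H$-valued polynomials $\sum_{\abs\beta\le k}c_\beta\xi^\beta$ with $c_\beta\in H$ (apply scalar reproduction coefficientwise, using unisolvence), we have $G-I_kG=(G-T)-I_k(G-T)$ for the Taylor polynomial $T(\xi):=\sum_{\abs\beta\le k}\frac{D^\beta G(0)}{\beta!}\xi^\beta$ at the center. Then $\sup_\Ccal\norm{G-I_kG}_H\le(1+\Lambda)\sup_\Ccal\norm{G-T}_H$, where $\Lambda:=\sup_{\Ccal}\sum_i\abs{\ell_i}$ is the Lebesgue constant, depending only on $d,k$ and the nodes.

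The remaining step is the Taylor remainder bound in a Banach space, which is the only point needing care since no mean-value theorem with an intermediate point is available in $H$. I will use the integral form: for $k\ge1$, $G(\xi)-T(\xi)=k\sum_{\abs\beta=k}\frac{\xi^\beta}{\beta!}\int_0^1(1-t)^{k-1}\bigl(D^\beta G(t\xi)-D^\beta G(0)\bigr)\,dt$, valid for $H$-valued $C^k$ maps (e.g.\ by pairing with functionals $\ell\in H^*$, applying the scalar formula, and using Hahn--Banach; the Bochner/Riemann integral exists by continuity of $D^\beta G$). Bounding $\norm{D^\beta G(t\xi)-D^\beta G(0)}_H\le\abs{G}_{C^{k,\alpha}}\abs{t\xi}^\alpha$ and $\abs{\xi}\le\sqrt d$ gives $\sup_\Ccal\norm{G-T}_H\le C\abs{G}_{C^{k,\alpha}(\Ccal;H)}$; for $k=0$ this is directly the Hölder bound $\norm{G(\xi)-G(0)}_H\le\abs{G}_{C^{0,\alpha}}\abs\xi^\alpha$. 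Combining with the scaling step yields $C h^{k+\alpha}\abs{F}_{C^{k,\alpha}(Q;H)}$ with $C=C(d,k,\alpha,\text{nodes})$.
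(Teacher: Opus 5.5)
Your proposal is correct and follows the paper's proof essentially step for step. The steps are the pullback $G=F\circ A_Q$ to $\Ccal$ with the $h^{k+\alpha}$ scaling of the seminorm, the comparison with the $H$-valued Taylor polynomial at the origin via $IT=T$ and the Lebesgue-constant factor $(1+\Lambda)$, and the integral-remainder Taylor bound. Your explicit remainder formula, together with its justification via functionals and Hahn--Banach, just fills in the detail the paper cites as ``Taylor's theorem with integral remainder.''
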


\begin{proof}
The pullback $G:=F\circ A_Q$ satisfies
$\abs{G}_{C^{k,\alpha}(\Ccal;H)}\le(h/2)^{k+\alpha}\abs{F}_{C^{k,\alpha}(Q;H)}$ by
the chain rule, and both sides of \eqref{eq:banach-BH} are invariant under this
pullback, so we may assume $Q=\Ccal$. Let $IG:=\sum_i\ell_i\,G(y_i)$ and let $T$
be the $H$-valued Taylor polynomial of $G$ of degree $k$ at the origin. Taylor's
theorem with integral remainder gives
$\sup_\Ccal\norm{G-T}_H\le C\abs{G}_{C^{k,\alpha}}$. Since $\deg T\le k$ and the
nodes are unisolvent, $IT=T$, hence $G-IG=(G-T)-I(G-T)$ and
\begin{align*}
    \sup_\Ccal\norm{G-IG}_H
    \le(1+\Lambda)\sup_\Ccal\norm{G-T}_H
    \le C\abs{G}_{C^{k,\alpha}},
\end{align*}
where $\Lambda:=\sup_\Ccal\sum_i\abs{\ell_i}$ is the Lebesgue constant of the
reference nodes.
\end{proof}

By polynomial interpolation, we can approximate $\phi_{\theta}$ uniformly over
$\mathcal{M}_R$ using deterministic skeleton on $\MR$.
\begin{proposition}
\label{prop:deterministic-skeleton}
Let $1<p<\infty$. There exist $L,C>0$, depending only on $\Omega$, $R$, $d$,
$k$, and $p$, such that for every $n\in\N$ one can choose points
$\zeta_1,\dots,\zeta_N\in\MR$ with $N\le Ln$, and bounded measurable functions
$\alpha_1,\dots,\alpha_N:\MR\to\R$, such that
\begin{align}
    \sup_{\theta\in\MR}
    \Big\|\phi^k_\theta-\sum_{i=1}^N\alpha_i(\theta)\,\phi^k_{\zeta_i}
    \Big\|_{L^p(\Omega)}
    \le C\,n^{-\gamma_{k,p}}.
    \label{eq:skeleton-error}
\end{align}
\end{proposition}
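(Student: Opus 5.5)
The plan is to cover $\MR$ by finitely many smooth charts, subdivide each chart's parameter cube into $\mathcal O(n)$ small cubes of side $h\simeq n^{-1/d}$, and interpolate the dictionary map $\theta\mapsto\phi^k_\theta$ on each small cube by the Lagrange scheme of Lemma~\ref{lem:banach-bramble-hilbert}. Concretely, since $\MR=\Sph^{d-1}\times[-R,R]$ is a compact $d$-dimensional manifold with boundary, fix a finite atlas of smooth charts $\varphi_m:\Ccal\to\MR$, $m=1,\dots,M$, with $M$ depending only on $d$ and $R$. Each $\varphi_m$ should extend smoothly to a neighbourhood of $\Ccal$, and the images $\varphi_m(\Ccal)$ should cover $\MR$; for instance, use finitely many stereographic or graph charts of $\Sph^{d-1}$ times an affine map onto $[-R,R]$. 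By Lemma~\ref{lem:local-parameter-smoothness}, each $P_m(\eta):=\phi^k_{\varphi_m(\eta)}$ lies in $C^{k,1/p}(\Ccal;L^p(\Omega))$, with seminorm bounded by a constant depending only on $\Omega,R,d,k,p$ once the atlas is fixed.

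Given $n$, set $J:=\lceil n^{1/d}\rceil$ and split $\Ccal$ into $J^d$ congruent closed subcubes $Q$ of side $h=2/J\le 2n^{-1/d}$. On each $Q$, place the transported nodes $y_i^Q=A_Q(y_i)$ with Lagrange functions $\ell_i^Q$. The skeleton is $\{\zeta\}=\{\varphi_m(y_i^Q)\}$ over all $m$, $Q$ and $i\le Q_{d,k}$, so $N\le M\,Q_{d,k}\,J^d\le M\,Q_{d,k}\,2^d\,n=:Ln$. The points depend only on $n$, and they lie in $\MR$ because the nodes lie in $\Ccal$. Applying \eqref{eq:banach-BH} with $H=L^p(\Omega)$ and $\alpha=1/p$ gives, for every $\eta\in Q$,
\begin{align*}
\Big\|P_m(\eta)-\sum_{i}\ell_i^Q(\eta)P_m(y_i^Q)\Big\|_{L^p(\Omega)}\le C h^{k+1/p}\abs{P_m}_{C^{k,1/p}}\le C n^{-\gamma_{k,p}}.
\end{align*}

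To define the coefficient functions $\alpha_i$ measurably, choose a Borel partition of $\MR$. Take a Borel selection $\theta\mapsto(m(\theta),Q(\theta),\eta(\theta))$: let $m(\theta)$ be the smallest $m$ with $\theta\in\varphi_m(\Ccal)$, let $\eta=\varphi_m^{-1}(\theta)$ (the charts are injective), and let $Q(\theta)$ be the first subcube in a fixed ordering that contains $\eta$. Then set $\alpha_{(m,Q,i)}(\theta):=\ell_i^Q(\eta(\theta))$ when $(m,Q)=(m(\theta),Q(\theta))$, and $0$ otherwise. These functions are bounded by the Lebesgue constant $\Lambda$ of the reference nodes, since $\ell_i^Q\circ A_Q=\ell_i$ on $\Ccal$. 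The sum in \eqref{eq:skeleton-error} then reduces to the local interpolant at $\eta(\theta)$, and the bound above gives \eqref{eq:skeleton-error} uniformly in $\theta$.

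The main obstacle is making the chart step rigorous. First, Lemma~\ref{lem:local-parameter-smoothness} requires uniform constants, which means the charts must be smooth up to the boundary of $\Ccal$ with bounded derivatives; a finite atlas of charts extending to a neighbourhood handles this. Second, the bias coordinate must stay inside $[-R,R]$, which is automatic because the charts map into $\MR$. Both points are bookkeeping rather than genuine difficulties. The only real analytic input, the $1/p$-Hölder continuity of the indicator term, is already contained in Lemma~\ref{lem:local-parameter-smoothness}.
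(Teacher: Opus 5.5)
Your proposal is correct and follows essentially the same route as the paper: you chart the cylinder, subdivide the parameter cube into $\mathcal O(n)$ subcubes of side $h\asymp n^{-1/d}$, and apply the Banach-valued Bramble--Hilbert estimate with $\alpha=1/p$ to the $C^{k,1/p}(\Ccal;L^p(\Omega))$ map from Lemma~\ref{lem:local-parameter-smoothness}, with coefficients bounded by the Lebesgue constant. The differences are cosmetic: the paper uses two charts of $\mathbb S^{d-1}$ and a fixed measurable partition $V^+\cup V^-$, whereas your general finite atlas with an explicit first-index Borel selection states the measurability and the count $N\le M\,Q_{d,k}\,2^d n$ slightly more explicitly.
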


\begin{proof}
Cover $\Sph^{d-1}$, hence the cylinder $\MR$, by two smooth charts
$\varphi^\pm:\Ccal\to U^\pm\subset\MR$, and fix a measurable partition
$\MR=V^+\cup V^-$ with $V^\pm\subset U^\pm$.
It suffices to carry out the construction on one chart $\varphi:\Ccal\to U$,
since taking the union over the two atlas only multiplies $N$ and the
constants by $J$.

Let $m:=\lceil n^{1/d}\rceil$ and partition $\Ccal$ into $m^d\le 2^d n$ subcubes
$Q_1,\dots,Q_{m^d}$ of side $h=2/m\asymp n^{-1/d}$. Transport the reference
Lagrange nodes to each $Q_j$ as in Lemma~\ref{lem:banach-bramble-hilbert} and take
as skeleton points their chart images $\zeta_{j,i}:=\varphi(y_i^{Q_j})$, so that
after summing over the two atlas one still has $N\le Ln$. For $\theta\in V$
with $\eta:=\varphi^{-1}(\theta)\in Q_j$, let $\alpha(\theta)$ collect the
Lagrange weights $\ell_i^{Q_j}(\eta)$ at the indices $(j,i)$ and vanish elsewhere.
These are measurable and bounded by the reference Lebesgue constant.

By Lemma~\ref{lem:local-parameter-smoothness},
$P:=\phi^k_{\varphi(\cdot)}\in C^{k,1/p}(\Ccal;L^p(\Omega))$ with
\begin{align*}
    \abs{P}_{C^{k,1/p}(Q_j;L^p(\Omega))}
    \le\abs{P}_{C^{k,1/p}(\Ccal;L^p(\Omega))}\le C
\end{align*}
for every $j$. Lemma~\ref{lem:banach-bramble-hilbert} on $Q_j$, with
$\alpha=\tfrac1p$, then gives, for $\eta\in Q_j$,
\begin{align*}
    \Big\|P(\eta)-\sum_i\ell_i^{Q_j}(\eta)\,P(y_i^{Q_j})\Big\|_{L^p(\Omega)}
    \le C\,h^{k+\frac1p}
    \le C\,n^{-\frac{k+1/p}{d}}=C\,n^{-\gamma_{k,p}},
\end{align*}
which is \eqref{eq:skeleton-error}.
\end{proof}

\subsection{Sampling lemma in type-\texorpdfstring{$\min(p,2)$}{min(p,2)} spaces}
We recall the relevant Banach-space geometry. A Banach space $B$ has
\emph{Rademacher type} $r\in[1,2]$ if there is a constant $T_r(B)<\infty$ such
that for every finite sequence $x_1,\dots,x_q\in B$,
\begin{align}
    \mathbb E_\varepsilon
    \norm{\sum_{j=1}^q \varepsilon_j x_j}_B^r
    \le
    T_r(B)^r\sum_{j=1}^q\norm{x_j}_B^r,
    \label{eq:rademacher-type-def}
\end{align}
where $\varepsilon_1,\dots,\varepsilon_q$ are independent Rademacher signs. By the
standard symmetrization argument, \eqref{eq:rademacher-type-def} implies that for
independent mean-zero $B$-valued random variables $W_1,\dots,W_q$,
\begin{align}
    \mathbb E\norm{\sum_{j=1}^q W_j}_B^r
    \le
    C_r\,T_r(B)^r\sum_{j=1}^q\mathbb E\norm{W_j}_B^r .
    \label{eq:type-r-independent}
\end{align}
We shall only use this consequence below. For $1<p<\infty$, the space
$L^p(\Omega)$ has Rademacher type
\begin{align}
    r_p:=\min(p,2),
    \label{eq:lp-type-rp}
\end{align}
and this type is optimal in infinite-dimensional $L^p$ spaces. See
\cite[Chapter 6]{albiac_TopicsBanachSpace_2006}; for the application of
this geometry, see also~\cite{siegel_sharpboundsapproximation_2024}.

\begin{lemma}
\label{lem:high-prob-uniform-hilbert}
Let $1<p<\infty$, set $r_p:=\min(p,2)$ and
$\alpha_p:=1-\tfrac1{r_p}$. There is a constant $C_p>0$ with the following
property. Let $(X,\mu)$ be a probability space and
$Y\in L^{r_p}(X,\mu;L^p(\Omega))$; set
$A:=\int_X Y\,d\mu$ and
\begin{align*}
    \sigma^{r_p}:=\int_X\norm{Y-A}_{L^p(\Omega)}^{r_p}\,d\mu.
\end{align*}
Let $\delta\in(0,1)$, let $m\ge16\log(2/\delta)$, and let
$\Theta_1,\dots,\Theta_m\overset{\mathrm{iid}}\sim\mu$. Then there are
coefficients $\beta_1,\dots,\beta_m$, measurable in the samples, such that
\begin{align}
    \mathbb P\left(
    \Big\|\sum_{j=1}^m\beta_j\,Y(\Theta_j)-A\Big\|_{L^p(\Omega)}
    \le C_p\,\sigma
    \left(\frac{\log(2/\delta)}{m}\right)^{\alpha_p}
    \right)\ge1-\delta.
    \label{eq:type2-sampling}
\end{align}
For $p=2$ the space is Hilbertian and one may take $C_2$ a universal constant.
For $2\le p<\infty$ this recovers the type-$2$ estimate with
$\alpha_p=\tfrac12$; for $1<p<2$ it gives the type-$p$ rate
$\alpha_p=\tfrac1{p'}$.
\end{lemma}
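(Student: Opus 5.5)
We plan to prove the lemma with a median-of-means estimator, using uniform weights inside blocks. The in-expectation step comes from the type inequality \eqref{eq:type-r-independent}, and a standard boosting argument then upgrades it to high probability. Write $r:=r_p$.

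\emph{Step 1 (expectation bound).} Fix a block size $q$ and set $\bar Y_q:=\frac1q\sum_{j=1}^q Y(\Theta_j)$. The variables $W_j:=\frac1q\bigl(Y(\Theta_j)-A\bigr)$ are independent and mean zero in $L^p(\Omega)$. They lie in $L^r$ by the hypothesis on $Y$, and the Bochner integral $A$ is well defined since $r\ge1$. By \eqref{eq:type-r-independent} with $B=L^p(\Omega)$ of type $r$ (see \eqref{eq:lp-type-rp}),
\begin{align*}
\mathbb E\norm{\bar Y_q-A}^r\le C_rT_r^r\,q\cdot q^{-r}\sigma^r,
\end{align*}
so $(\mathbb E\norm{\bar Y_q-A}^r)^{1/r}\le C\sigma q^{-(1-1/r)}=C\sigma q^{-\alpha_p}$. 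Markov's inequality then gives $\mathbb P(\norm{\bar Y_q-A}>e\,C\sigma q^{-\alpha_p})\le e^{-r}\le 1/e$, and we pick the constant so that this failure probability is at most $1/4$.

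\emph{Step 2 (boosting).} Let $K:=\lceil 8\log(2/\delta)\rceil$. The hypothesis $m\ge16\log(2/\delta)$ guarantees $K\le m$ and a block size $q:=\lfloor m/K\rfloor\ge m/(2K)\gtrsim m/\log(2/\delta)$. We split the first $Kq$ samples into $K$ disjoint blocks and form the block means $Z_1,\dots,Z_K$, which are i.i.d. Each is $\epsilon$-close to $A$ with probability at least $3/4$, where $\epsilon:=C\sigma q^{-\alpha_p}$. Since $L^p(\Omega)$ is a metric space and not ordered, we use a geometric median selector: let $\hat Z:=Z_{i^*}$, where $i^*$ is the smallest index $i$ such that $\norm{Z_i-Z_l}\le2\epsilon$ for more than $K/2$ indices $l$.

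The selector is correct on the following event. If more than $K/2$ of the $Z_l$ are good (within $\epsilon$ of $A$), then every good $Z_i$ qualifies, so $i^*$ exists. Moreover, any qualifying $Z_i$ is within $2\epsilon$ of some good $Z_l$, since two sets of more than $K/2$ indices intersect; hence $\norm{\hat Z-A}\le3\epsilon$. Hoeffding's inequality bounds the probability that at most $K/2$ blocks are good by $\exp(-2K/16)\le\delta/2$.

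The selection rule depends on $\epsilon$, which involves the unknown $\sigma$. This is harmless because the lemma only asserts that the coefficients exist. To keep them measurable, we define $i^*$ by the rule above for the deterministic $\epsilon$ computed from $\sigma$; the resulting index is a measurable function of the samples. We then set $\beta_j:=1/q$ for $j$ in block $i^*$ and $\beta_j:=0$ otherwise. If $i^*$ does not exist, we take $\beta=0$; that case lies inside the failure event.

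\emph{Step 3 (conclusion).} On an event of probability at least $1-\delta$,
\begin{align*}
\norm{\textstyle\sum_j\beta_jY(\Theta_j)-A}\le3C\sigma q^{-\alpha_p}\le C_p\sigma\bigl(\log(2/\delta)/m\bigr)^{\alpha_p},
\end{align*}
which is \eqref{eq:type2-sampling}. For $p=2$ we can replace the type inequality by the exact Hilbert identity $\mathbb E\norm{\bar Y_q-A}^2=\sigma^2/q$, which gives a universal constant $C_2$.

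The main obstacle is the boosting step: a real-valued median is unavailable in $L^p$, and the coefficients must remain measurable. The fixed-threshold geometric-median rule above handles both. A secondary point is checking the independence and mean-zero hypotheses needed to invoke \eqref{eq:type-r-independent}, which requires $Y(\Theta_j)$ to be Bochner-measurable; the assumption $Y\in L^{r}(X;L^p)$ provides this.
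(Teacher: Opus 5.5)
Your proposal is correct and follows the paper's proof essentially step for step. Both use block means over $\lceil 8\log(2/\delta)\rceil$ blocks, the type-$r_p$ inequality plus Markov to make each block good with probability at least $3/4$, Hoeffding to get a good majority, and a geometric-median selection that yields the $3\epsilon$ bound. The one difference is the selector: your fixed-threshold rule needs the unknown $\epsilon$ (hence $\sigma$), which is legitimate since the lemma only asserts existence of the coefficients, whereas the paper picks the block minimizing the median of $\|Z_b-Z_c\|_{L^p}$, which is $\sigma$-free and gives a genuine estimator.
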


\begin{proof}
Write $r:=r_p$ and $\alpha:=1-\tfrac1r$. By \eqref{eq:type-r-independent} and 
\eqref{eq:lp-type-rp}, the space $L^p(\Omega)$ satisfies the type-$r$ inequality 
for independent mean-zero random variables. Let $\tau_p := C_r^{1/r}\,T_r(L^p(\Omega))$ 
denote the corresponding constant. For $p=2$, $\tau_p$ is universal; for $p\ge2$ one may 
take $\tau_p\lesssim\sqrt p$.

Set $\ell:=\log(2/\delta)$, $B:=\lceil8\ell\rceil$, $q:=\lfloor m/B\rfloor$, and
split $\{1,\dots,m\}$ into $B$ disjoint blocks $I_1,\dots,I_B$ of size $q$,
discarding the leftover indices. Let
\begin{align*}
    Z_b:=q^{-1}\sum_{j\in I_b}Y(\Theta_j).
\end{align*}
Then $\mathbb E Z_b=A$, and \eqref{eq:type-r-independent} gives
\begin{align*}
    \mathbb E\norm{Z_b-A}_{L^p}^r
    \le \tau_p^r\,\sigma^r\,q^{1-r}.
\end{align*}
Set
\begin{align*}
    R_\ast:=4^{1/r}\tau_p\,\sigma\,q^{1/r-1}.
\end{align*}
By Markov's inequality, each block is \emph{good}, meaning
$\norm{Z_b-A}_{L^p}\le R_\ast$, with probability at least $3/4$, independently
across blocks. Hoeffding's inequality gives
\begin{align*}
    \mathbb P\big(\#\{\text{good }b\}\le B/2\big)\le e^{-B/8}\le e^{-\ell}
    =\delta/2 < \delta.
\end{align*}
Work on the complementary event, where good blocks form a strict majority. For
each $b$ let the median\footnote{the middle value of a sorted 
list of numbers.} $m_b:=\med_{1\le c\le B}\norm{Z_b-Z_c}_{L^p}$ and choose
$\widehat b$ with $m_{\widehat b}$ minimal, using a deterministic tie-breaking
rule. If $b$ is good, then $\norm{Z_b-Z_c}_{L^p}\le2R_\ast$ for every good $c$,
so $m_b\le2R_\ast$. Conversely, if $\norm{Z_b-A}_{L^p}>3R_\ast$, then
$\norm{Z_b-Z_c}_{L^p}>2R_\ast$ for every good $c$, so $m_b>2R_\ast$. Hence the
minimizer satisfies
\begin{align*}
    \norm{Z_{\widehat b}-A}_{L^p}\le3R_\ast .
\end{align*}
Finally $\ell\ge\log2$ gives $B\le10\ell$, and $m\ge16\ell$ gives
$q\ge m/(3B)\ge m/(30\ell)$, so
\begin{align*}
    3R_\ast
    \le C_p\,\sigma\left(\frac{\ell}{m}\right)^{1-1/r}
    = C_p\,\sigma\left(\frac{\log(2/\delta)}{m}\right)^{\alpha_p}.
\end{align*}
Setting $\beta_j:=q^{-1}\mathbf 1_{\{j\in I_{\widehat b}\}}$ realizes
$\sum_j\beta_jY(\Theta_j)=Z_{\widehat b}$ and proves \eqref{eq:type2-sampling}.
\end{proof}

\subsection{Proof of Theorem~\ref{thm:high-prob-random-feature}}

\begin{proof}[Proof of Theorem~\ref{thm:high-prob-random-feature}]
Take as deterministic points $\zeta_1,\dots,\zeta_N$ the skeleton points of
Proposition~\ref{prop:deterministic-skeleton}; then $N\le Ln$. Set
$\ell_\delta:=\log(2/\delta)$.

\emph{Step 1: decomposition.} By Definition~\ref{def:radon-lp-representation-space}
(and, when $p=2$, Theorem~\ref{thm:bounded-bias-decomposition} together with
Corollary~\ref{cor:quotient-norm-equiv-sobolev}), choose a near-minimizing density
$g\in L^p(\MR)$ with
\begin{align*}
    u=v_g\ \text{ in }L^p(\Omega),
    \quad
    v_g:=\int_{\MR}\phi^k_\theta\,g(\theta)\,d\lambda(\theta),
    \quad
    \norm{g}_{L^p(\MR)}\le2\,\norm{u}_{\mathcal{R}L^p_k(\Omega)}.
\end{align*}
Fix a measurable representative of $g$; all point values $g(\Theta_j)$ below refer
to it, and changing it on a $\lambda$-null set affects nothing. Since $u=v_g$, it
suffices to find coefficients with
\begin{align}
    \mathbb P\left(
    \Big\|v_g-\sum_i a_i\phi^k_{\zeta_i}-\sum_j b_j\phi^k_{\Theta_j}\Big\|_{L^p(\Omega)}
    \le C\ell_\delta^{\alpha_p}\,\norm{g}_{L^p(\MR)}\,n^{-\alpha_p-\gamma_{k,p}}
    \right)\ge1-\delta.
    \label{eq:vg-target-lp}
\end{align}

\emph{Step 2: skeleton split.} With $\alpha_i$ as in
Proposition~\ref{prop:deterministic-skeleton}, set
\begin{align*}
    \psi^k_\theta:=\phi^k_\theta-\sum_i\alpha_i(\theta)\,\phi^k_{\zeta_i},
    \quad
    r_g:=\int_{\MR}\psi^k_\theta\,g(\theta)\,d\lambda(\theta),
\end{align*}
and denote $d_g:=v_g-r_g \in\spanop\{\phi^k_{\zeta_i}\}$. Since 
$\lambda(\MR)<\infty$, H\"older's inequality and \eqref{eq:skeleton-error} give
\begin{align}
    \norm{r_g}_{L^p(\Omega)}
    \le\Lambda_R^{1-1/p}\,\norm{g}_{L^p(\MR)}\sup_{\theta\in\MR}\norm{\psi^k_\theta}_{L^p(\Omega)}
    \le C\,n^{-\gamma_{k,p}}\norm{g}_{L^p(\MR)}.
    \label{eq:rg-deterministic-lp}
\end{align}

\emph{Step 3: small $n$.} If $n<16\ell_\delta$, take $b_j\equiv0$ and approximate
$v_g$ by $d_g$ alone: since
\begin{align*}
    n^{-\gamma_{k,p}}
    \le 16^{\alpha_p}\ell_\delta^{\alpha_p}
    n^{-\alpha_p-\gamma_{k,p}}
\end{align*}
in this regime, \eqref{eq:rg-deterministic-lp} already implies
\eqref{eq:vg-target-lp}, deterministically.

\emph{Step 4: sampling the residual.} If $n\ge16\ell_\delta$, apply
Lemma~\ref{lem:high-prob-uniform-hilbert} on $(\MR,\mu_R)$ with
$Y(\theta):=\Lambda_R\,g(\theta)\,\psi_\theta$, whose mean is $r_g$. Write
$r:=\min(p,2)$. Since $r\le p$ and $\lambda(\MR)<\infty$, the finite-measure
embedding $L^p(\MR)\hookrightarrow L^r(\MR)$, together with \eqref{eq:skeleton-error},
gives
\begin{align}
    \int_{\MR}\norm{Y}_{L^p(\Omega)}^r\,d\mu_R
    &=\Lambda_R^{r-1}\int_{\MR}\abs{g(\theta)}^r
    \norm{\psi^k_\theta}_{L^p(\Omega)}^r\,d\lambda(\theta)
    \nonumber\\
    &\le C\,n^{-r\gamma_{k,p}}\norm{g}_{L^p(\MR)}^r.
    \label{eq:Y-r-moment-lp}
\end{align}
Moreover, if $\sigma^r:=\int_{\MR}\norm{Y-r_g}_{L^p(\Omega)}^r\,d\mu_R$, then
Jensen's inequality and \eqref{eq:Y-r-moment-lp} imply
\begin{align*}
    \sigma^r
    \le 2^r\int_{\MR}\norm{Y}_{L^p(\Omega)}^r\,d\mu_R
    \le C\,n^{-r\gamma_{k,p}}\norm{g}_{L^p(\MR)}^r.
\end{align*}
Lemma~\ref{lem:high-prob-uniform-hilbert} yields coefficients $\beta_j$ such that,
with probability at least $1-\delta$,
\begin{align*}
    \widehat r:=\sum_{j=1}^n\beta_j\,Y(\Theta_j)
    =\sum_{j=1}^n c_j\,\phi^k_{\Theta_j}
     -\sum_i\Big(\sum_{j=1}^n c_j\,\alpha_i(\Theta_j)\Big)\phi^k_{\zeta_i},
    \qquad c_j:=\Lambda_R\,\beta_j\,g(\Theta_j),
\end{align*}
and
\begin{align*}
    \norm{r_g-\widehat r}_{L^p(\Omega)}
    \le C\ell_\delta^{\alpha_p}\,\norm{g}_{L^p(\MR)}
    n^{-\alpha_p-\gamma_{k,p}}.
\end{align*}
The second sum lies in $\spanop\{\phi_{\zeta_i}\}$, so $d_g+\widehat r$ has the
required form, and $v_g-(d_g+\widehat r)=r_g-\widehat r$ gives
\eqref{eq:vg-target-lp}.

Combining Steps 1--4 with
$\norm{g}_{L^p(\MR)}\le2\norm{u}_{\mathcal{R}L^p_k(\Omega)}$ proves
\eqref{eq:lp-rf-bound-Bkp}.
\end{proof}

\begin{remark}
The deterministic neurons carry only the interpolation skeleton on $\MR$. The
random neurons are used solely for the residual $\psi_\theta$, whose uniform size
$\mathcal O(n^{-\gamma_{k,p}})$ is what allows the sampling step to contribute the additional
factor $n^{-\alpha_p}$.
\end{remark}

\begin{remark}
\label{rem:why-p-ge-2}
In Section~\ref{sec:radon-lp-theory}, the $L^p$
boundedness and the Seeger--Sogge--Stein regularity of the Radon transform hold on the whole range $1<p<\infty$, so the embedding
$W^{s_{k,p},p}(\Omega)\hookrightarrow\mathcal{R}L^p_k(\Omega)$ of
Theorem~\ref{the:radon-lp-sobolev-embedding} is available there as well. What
changes for $1<p<2$ is only the sampling geometry: $L^p(\Omega)$ has Rademacher
type $p$, not type $2$. Consequently the $n$ i.i.d.\ uniform neurons average the
residual integral at the type-$p$ rate $n^{-(1-1/p)}=n^{-1/p'}$ instead of
$n^{-1/2}$. This is the dimension-free rate available under the sole
$L^p$-moment control coming from $g\in L^p(\MR)$.
\end{remark}

\begin{remark}[Rate comparison]
\label{rem:lp-rate-comparison}
Two exponents have now been produced by the theory: the smoothness exponents
$r_{k,p}\le s_{k,p}$ of the sandwich of
Theorem~\ref{the:radon-lp-sobolev-embedding}, and the rate
\begin{align}
    \beta_{k,p}:=\alpha_p+\gamma_{k,p}
    =1-\frac1{\min(p,2)} + \frac{k+1/p}{d}
    \label{eq:rate-exponent}
\end{align}
of Theorem~\ref{thm:high-prob-random-feature}. The comparison rests on the
elementary identity
\begin{align*}
    \gamma_{k,p}+\frac1{p'}
    =\frac1d\left(k+d-\frac{d-1}{p}\right)
    =\begin{cases}
        r_{k,p}/d, & 1<p\le2,\\[1mm]
        s_{k,p}/d, & 2\le p<\infty,
    \end{cases}
\end{align*}
which says that a sampling step running at the type-$p$ rate $1/p'$ would land
exactly on the right endpoint of the sandwich below $p=2$ and exactly on its left
endpoint above $p=2$. The actual sampling rate is however
$\alpha_p=\min\{\tfrac12,\tfrac1{p'}\}$, so that
\begin{align}
    \beta_{k,p}
    =\frac{r_{k,p}}{d}+\frac{d-2}{d}\left(\frac12-\frac1p\right)_+
    =\frac{s_{k,p}}{d}-
    \begin{cases}
        \dfrac{2\delta_p}{d}=\dfrac{(d-1)(2-p)}{pd}, & 1<p<2,\\[2mm]
        \dfrac12-\dfrac1p, & 2\le p<\infty .
    \end{cases}
    \label{eq:rate-vs-sandwich}
\end{align}
In particular $\beta_{k,2}=r_{k,2}/d=s_{k,2}/d=s_k/d$, one has
$\beta_{k,p}=r_{k,p}/d$ for $1<p\le2$, and
$r_{k,p}/d\le\beta_{k,p}<s_{k,p}/d$ for $2<p<\infty$, with equality on the left
exactly when $d=2$.
\end{remark}

\section{Numerical experiments}
\label{sec:numerical-experiments}

We test the effect of the deterministic skeleton in a function approximation task.
On $\Omega=[-1,1]^d$ we approximate
\begin{align*}
    u_d(x)=\prod_{i=1}^d \sin\Big(\frac{\pi x_i}{2}\Big),
    \qquad d=2,\ldots,6,
\end{align*}
using $\Sigma_M^{\sigma_k}$ with $k=3$ and
$M\in\{2^m: m=4,\ldots,11\}$.  For each inner-feature set we solve the direct
least-squares problem by \texttt{scipy.linalg.lstsq} and report the relative
$L^2(\Omega)$ error on an independent quadrature rule.  The full sweep uses thirteen independent
sampling random seeds.

The random sampler draws $w\in\Sph^{d-1}$ by Gaussian normalization and draws
$b$ uniformly from $[-\sqrt d,\sqrt d]$.  The fixed sampler uses a
quasi-uniform~\cite{mao_solvinghighdimensionalpdes_2026,he_liu_tian_divergencefree_2026}
spherical grid and a deterministic bounded-bias grid.
The mixed sampler takes a prescribed fraction (we use $0.5$ here)
of features from the fixed grid and fills the rest at random.
Hyperplanes not intersecting $\Omega$ are
filtered out in the implementation.

\begin{figure}[t]
    \centering
    \includegraphics[width=0.98\textwidth]{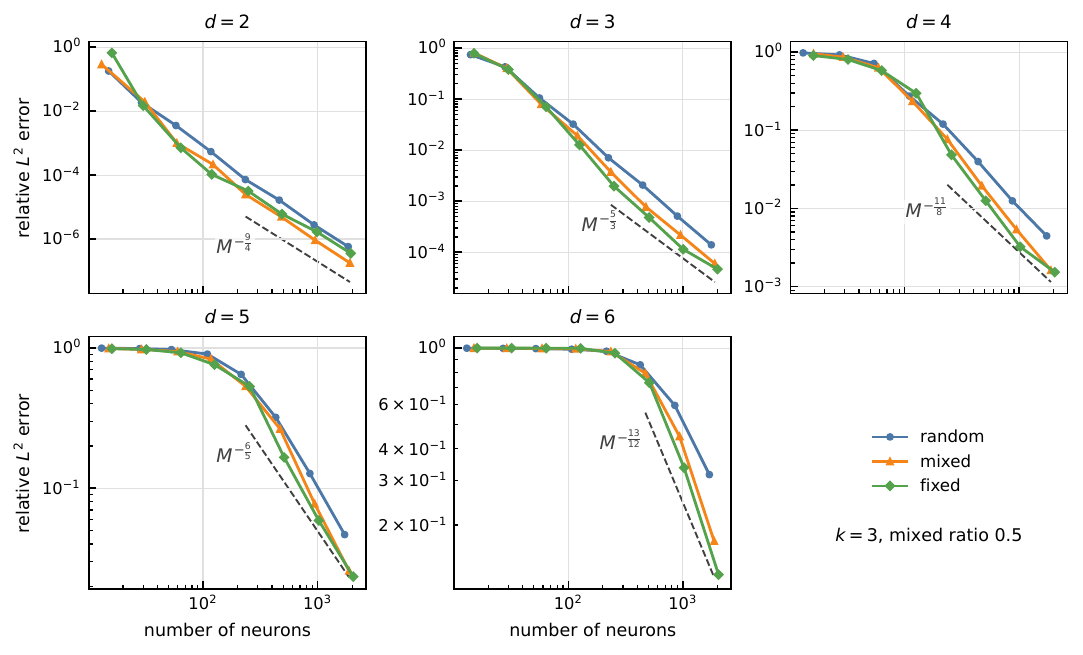}
    \caption{Convergence curves for $k=3$ and mixed ratio $0.5$.  For each requested neuron size,
    the curve is plotted at the median post-filter number of active neurons, and the vertical value
    is the median relative error over thirteen seeds. Dashed black segments show the theoretical
    slope $M^{-s_k/d}$, $s_k=(d+2k+1)/2$.}
    \label{fig:radon-rf-full-loss-curves}
\end{figure}

Figure~\ref{fig:radon-rf-full-loss-curves} gives the most direct picture.  The fully random sampler is consistently the least accurate curve.  The fixed grid is the most
accurate or nearly so, and the mixed sampler stays between these two baselines in four of the five
dimensions while still keeping the random component.  Thus the deterministic part is not only a
theoretical device: it regularizes the random projection in a way visible at the level of the whole
convergence curve.

\section{Conclusion and outlook}
\label{sec:conclusion}

We have developed a Radon-domain $L^p$ theory for shallow $\mathrm{ReLU}^k$
networks on a bounded Lipschitz domain, by studying the analysis properties of
the analysis and synthesis operators from the perspective of microlocal analysis.  
The central object is the space
$\mathcal{R}L^p_k(\Omega)=\mathrm{S}^k_R\bigl(L^p(\mathbb{S}^{d-1}\times[-R,R])\bigr)$. 
At $p=2$ elementary Fourier analysis on the Radon
side identifies it with the critical Sobolev space $H^{s_k}(\Omega)$,
$s_k=(k+1)+\tfrac{d-1}{2}$, the two summands indicating the $k+1$ orders carried
by the activation and the $\tfrac{d-1}{2}$ orders supplied by the dual Radon
transform.  For $1<p<\infty$, the Hilbertian identity degrades into the sharp
Sobolev sandwich
$W^{s_{k,p},p}(\Omega)\hookrightarrow\mathcal{R}L^p_k(\Omega)\hookrightarrow
W^{r_{k,p},p}(\Omega)$, whose width $s_{k,p}-r_{k,p}=2\delta_p$ is exactly the
Seeger--Sogge--Stein loss traversed once by the analysis map $\mathrm{A}^k$ and
once by the synthesis map $\mathrm{S}^k_R$, and neither endpoint can be moved.
Discretizing the ridge integral by a deterministic interpolation skeleton
together with $n$ i.i.d.\ uniform neurons then yields, with high probability,
the $L^p(\Omega)$ rate $n^{-\alpha_p-\gamma_{k,p}}$ from $\mathcal O(n)$
neurons; the exponent is the optimal $s_k/d$ at $p=2$, equals the benchmark
exponent $r_{k,p}/d$ of the right endpoint for $1<p\le2$, and lies strictly
between $r_{k,p}/d$ and $s_{k,p}/d$ for $p>2$.  The numerical experiments of
Section~\ref{sec:numerical-experiments} confirm that the deterministic skeleton
is not merely a proof device: the mixed sampler improves on the purely random
one across dimensions.

Many new problems arise, both theoretical and practical. For $p\neq2$ the mixed
sampler attains $n^{-\alpha_p-\gamma_{k,p}}$ for each fixed target in $\mathcal{R}L^p_k(\Omega)$,
with $\alpha_p+\gamma_{k,p}=r_{k,p}/d$ for $1<p\le2$ and
$r_{k,p}/d\le\alpha_p+\gamma_{k,p}<s_{k,p}/d$ for $p>2$. Closing the gap requires
improving the sampling exponent, or establishing a matching lower bound through 
the still-unknown metric entropy of the unit ball of $\mathcal{R}L^p_k(\Omega)$.  
More fundamentally, the gap $2\delta_p$ is
intrinsic to the Bessel-potential scale: on spaces adapted to Fourier integral
operators the Seeger--Sogge--Stein loss disappears, suggesting an exact
characterization of $\mathcal{R}L^p_k(\Omega)$ by a single norm equivalence for
every $1<p<\infty$.  Extending the theory to the endpoints $p=1$ and
$p=\infty$ is another open problem.  Finally, our hybrid construction combines
a deterministic skeleton with uniformly sampled neurons. It remains to
determine whether a fully
deterministic skeleton can attain the same exponent, thereby yielding a linear
approximation scheme.

\newpage
\appendix
\section{Postponed proofs}\label{app:proofs}

\subsection{Proof of the analysis multiplier lemma}
\label{app:proof-of-analysis-wellposed}
We now prove Lemma~\ref{lem:analysis-multiplier-wellposed}.
\begin{proof}
For measurable $F\ge0$ on $\mathbb R^d$ the function $(w,\omega)\mapsto F(\omega w)$ is
invariant under $(w,\omega)\mapsto(-w,-\omega)$, so \eqref{eq:antipodal} reads
\begin{align}
    \int_{\mathbb S^{d-1}}\!\!\int_{\mathbb R}|\omega|^{d-1}F(\omega w)\,d\omega\,d\sigma(w)
    =2\int_{\mathbb R^d}F(\xi)\,d\xi .
    \label{eq:polar-pullback}
\end{align}

\emph{Well-definedness.} Let $E\subset\mathbb R^d$ be a Lebesgue null set. Taking
$F=\mathbf 1_E$ in \eqref{eq:polar-pullback} makes the right-hand side vanish, and
$|\omega|^{d-1}>0$ for a.e.\ $\omega$, so $\{(w,\omega):\omega w\in E\}$ is null in
$\mathbb S^{d-1}\times\mathbb R$. Hence $\widehat u(\omega w)$, and with it $m_u$, is
defined a.e.\ and is unchanged when $\widehat u$ is modified on a null set.

\emph{Temperedness.} Set $N':=N+k+1$ and apply \eqref{eq:polar-pullback} to
$F(\xi)=(1+|\xi|)^{-N'}|\xi|^{k+1}|\widehat u(\xi)|$. Since $|\omega w|=|\omega|$ and
$|\xi|^{k+1}\le(1+|\xi|)^{k+1}$,
\begin{align}
    \int_{\mathbb S^{d-1}}\!\!\int_{\mathbb R}
    \frac{|m_u(w,\omega)|}{(1+|\omega|)^{N'}}\,d\omega\,d\sigma(w)
    =2\int_{\mathbb R^d}\frac{|\xi|^{k+1}\,|\widehat u(\xi)|}{(1+|\xi|)^{N'}}\,d\xi
    \le 2\int_{\mathbb R^d}\frac{|\widehat u(\xi)|}{(1+|\xi|)^{N}}\,d\xi<\infty .
    \label{eq:mu-weighted-L1}
\end{align}
In particular $m_u\in L^1_{\mathrm{loc}}(\mathbb S^{d-1}\times\mathbb R)$, and for every
$\Phi\in\mathcal S(\mathbb S^{d-1}\times\mathbb R)$,
\begin{align*}
    \left|\int_{\mathbb S^{d-1}}\!\!\int_{\mathbb R}m_u\Phi\,d\omega\,d\sigma\right|
    \le\Bigl(\sup_{\mathbb S^{d-1}\times\mathbb R}(1+|\omega|)^{N'}|\Phi|\Bigr)
    \int_{\mathbb S^{d-1}}\!\!\int_{\mathbb R}
    \frac{|m_u|}{(1+|\omega|)^{N'}}\,d\omega\,d\sigma ,
\end{align*}
so $m_u\in\mathcal S'(\mathbb S^{d-1}\times\mathbb R)$. As $\mathcal F_b$ is a bijection of
$\mathcal S'(\mathbb S^{d-1}\times\mathbb R)$, the equation $\mathcal F_bg=c_d\,m_u$ has the
unique solution $g=\mathcal F_b^{-1}(c_d\,m_u)$.

\emph{The admissible class.} For $u\in H^{s}(\mathbb R^d)$ and any $N\ge0$ with $N>d/2-s$,
Cauchy--Schwarz gives
\begin{align*}
    \int_{\mathbb R^d}\frac{|\widehat u(\xi)|}{(1+|\xi|)^{N}}\,d\xi
    \le\bigl\|(1+|\xi|)^{s}\widehat u\bigr\|_{L^2(\mathbb R^d)}
      \bigl\|(1+|\xi|)^{-N-s}\bigr\|_{L^2(\mathbb R^d)}<\infty .
\end{align*}
For compactly supported $u\in\mathcal S'(\mathbb R^d)$, the Paley--Wiener--Schwartz
theorem~\cite{friedlander_IntroductionTheoryDistributions_2003} gives
$\widehat u\in C^\infty(\mathbb R^d)$ with $|\widehat u(\xi)|\le C(1+|\xi|)^{M}$ for some
$M\ge0$, so any $N>M+d$ is admissible.
\end{proof}

\subsection{Proof of the existence of \texorpdfstring{$k$}{k}-moments
of \texorpdfstring{$g_u$}{g_u}}\label{app:proof-of-k-moments}
We now prove Lemma~\ref{lem:canonical-density-weighted-L1},
\begin{proof}
Write
\begin{align*}
    \widehat g_u(w,\omega):=\mathcal F_b g_u(w,\omega),
    \qquad
    \Phi(\omega):=(i\omega)^{k+1}|\omega|^{d-1}.
\end{align*}
We first show that
\begin{align*}
    \partial_\omega^j \widehat g_u\in L^2(\Sph^{d-1}\times\R),
    \qquad 0\le j\le k+1.
\end{align*}
For $0\le r\le k+1$, the distributional derivative
$\partial_\omega^r\Phi$ is represented by a locally integrable function and satisfies
\begin{align*}
    |\partial_\omega^r\Phi(\omega)|
    \le C_r |\omega|^{k+d-r}
    \qquad\text{for a.e. }\omega\in\R .
\end{align*}
Indeed, $\Phi$ is a piecewise polynomial homogeneous function of degree $k+d$, and
no singular term at $\omega=0$ appears for derivatives of order $r\le k+1\le k+d$.
Using the Leibniz rule in the distributional sense, we obtain, for $0\le j\le k+1$,
\begin{align*}
    |\partial_\omega^j \widehat g_u(w,\omega)|^2
    \le C
    \sum_{\ell=0}^j
    |\omega|^{2(k+d-j+\ell)}
    |(w\cdot\nabla)^\ell \widehat u(\omega w)|^2 .
\end{align*}
For each term in the sum, the polar-coordinate identity gives
\begin{align*}
    &\int_{\Sph^{d-1}}\int_\R
    |\omega|^{2(k+d-j+\ell)}
    |(w\cdot\nabla)^\ell \widehat u(\omega w)|^2
    \,d\omega\,d\sigma(w)  \nonumber\\
    &\qquad\le
    C\int_{\R^d}
    |\xi|^{2s_k-2(j-\ell)}
    |\nabla^\ell \widehat u(\xi)|^2\,d\xi ,
\end{align*}
where $2s_k=2k+d+1$. Since $0\le j-\ell\le j\le k+1$, we have
\begin{align*}
    0\le 2s_k-2(j-\ell)\le 2s_k .
\end{align*}
Hence $|\xi|^{2s_k-2(j-\ell)} \le (1+|\xi|^2)^{s_k}$, and therefore
\begin{align*}
    \int_{\R^d}
    |\xi|^{2s_k-2(j-\ell)}
    |\nabla^\ell \widehat u(\xi)|^2\,d\xi
    \le
    C\sum_{|\alpha|=\ell}
    \|x^\alpha u\|_{H^{s_k}(\R^d)}^2 .
\end{align*}
Because $u$ is compactly supported, multiplication by $x^\alpha$ preserves
$H^{s_k}$: choosing $\chi\in C_c^\infty(\R^d)$ with $\chi=1$ near
$\operatorname{supp}u$, one has $x^\alpha u=(x^\alpha\chi)u\in H^{s_k}(\R^d)$.
Thus $\partial_\omega^j\widehat g_u\in L^2(\Sph^{d-1}\times\R)$ for all
$0\le j\le k+1$. By Plancherel in the $b$-variable,
\begin{align*}
    b^j g_u\in L^2(\Sph^{d-1}\times\R),
    \qquad 0\le j\le k+1.
\end{align*}
Finally, by Cauchy--Schwarz,
\begin{align*}
    \|(1+|b|)^k g_u\|_{L^1(\Sph^{d-1}\times\R)}
    &\le
    \left(
        \abs{\Sph^{d-1}}
        \int_\R (1+|b|)^{-2}\,db
    \right)^{1/2}
    \|(1+|b|)^{k+1}g_u\|_{L^2}  \\
    &\le
    C\sum_{j=0}^{k+1}
    \|b^j g_u\|_{L^2(\Sph^{d-1}\times\R)}
    <\infty .
\end{align*}
\end{proof}

\subsection{Proof of polynomial realization Lemma~\ref{lem:polynomial-lp-density}}
\label{app:proof-poly-realization}

\begin{proof}
The construction is algebraic and produces a single density $g_q$ whose only
$p$-dependence is in the final norm bound, where equivalence of norms on a
finite-dimensional space is invoked.

For $b\in[-R,-R_0]$ and $x\in\Omega$ one has $w\cdot x-b\ge R_0-R_0=0$, so
$\sigma_k(w\cdot x-b)=(w\cdot x-b)^k/k!$. Hence, for any $g$ supported in
$\Sph^{d-1}\times[-R,-R_0]$, expanding the binomial gives, for $x\in\Omega$,
\begin{align}
    \mathrm{S}^k_R g(x)
    &=\sum_{i=0}^k\int_{\Sph^{d-1}}a_i(w)\,(w\cdot x)^i\,d\sigma(w),
    \\
    a_i(w):&=\frac{\binom{k}{i}}{k!}\int_{-R}^{-R_0}(-b)^{k-i}g(w,b)\,db.
    \label{eq:AR-poly-expansion}
\end{align}

\emph{Matching the homogeneous parts.} Write $q=\sum_{i=0}^k q^{(i)}$ with
$q^{(i)}=\sum_{\abs\alpha=i}c_\alpha x^\alpha$ homogeneous of degree $i$. A
homogeneous polynomial vanishing on $\Sph^{d-1}$ vanishes identically, so the
monomials $\{w^\alpha:\abs\alpha=i\}$ are linearly independent in
$L^2(\Sph^{d-1})$ and their Gram matrix is invertible. Choosing $a_i$ in their
span with
$\binom{i}{\alpha}\int_{\Sph^{d-1}}a_i(w)\,w^\alpha\,d\sigma(w)=c_\alpha$ for all
$\abs\alpha=i$ makes the $i$-th term of \eqref{eq:AR-poly-expansion} equal to
$q^{(i)}$; here $\binom{i}{\alpha}=i!/\alpha!$. Summing over $i$ yields
$\sum_i\int_{\Sph^{d-1}}a_i(w)(w\cdot x)^i\,d\sigma=q(x)$ on $\Omega$, and since
$q\mapsto(a_i)$ is linear on the finite-dimensional space $\mathcal P_k(\R^d)$,
$\sum_i\norm{a_i}_{L^\infty(\Sph^{d-1})}\le C\norm{q}_{L^p(\Omega)}$.

\emph{Realizing the $b$-moments.} The monomials $\{(-b)^m\}_{m=0}^k$ are linearly
independent on $[-R,-R_0]$, so there is a bounded linear map
$\R^{k+1}\to\operatorname{span}\{(-b)^m\}\subset L^\infty([-R,-R_0])$ sending
$(\nu_0,\dots,\nu_k)$ to a function $\beta$ with
$\int_{-R}^{-R_0}(-b)^m\beta(b)\,db=\nu_m$, $0\le m\le k$. Applying it pointwise
in $w$ with $\nu_{k-i}(w):=\tfrac{k!}{\binom{k}{i}}a_i(w)$ defines
$g_q(w,\cdot):=\beta$, which realizes the moments in \eqref{eq:AR-poly-expansion}
and hence $\mathrm{S}^k_R g_q=q$ on $\Omega$. Finally, since
$\Sph^{d-1}\times[-R,-R_0]$ has finite measure and $g_q$ is built from $a_i$ by a
fixed bounded linear map,
\begin{align*}
    \norm{g_q}_{L^p(\MR)}
    \le C\sum_{m=0}^k\norm{\nu_m}_{L^\infty(\Sph^{d-1})}
    \le C\sum_{i=0}^k\norm{a_i}_{L^\infty(\Sph^{d-1})}
    \le C\norm{q}_{L^p(\Omega)} ,
\end{align*}
where the last step uses equivalence of all norms on the finite-dimensional space
$\mathcal P_k(\R^d)$ (here $\Omega$ has nonempty interior, so $\norm{\cdot}_{L^p(\Omega)}$
is a norm on $\mathcal P_k(\R^d)$).
\end{proof}

\section{Background on Fourier integral operators}
\label{app:fio-background}
This appendix recalls the microlocal background needed for
Lemma~\ref{lem:radon-fio}, Lemma~\ref{lem:inverse-fio-pair}, and the sharpness
argument in Proposition~\ref{prop:radon-lp-sandwich-sharp}: oscillatory integrals,
the H\"ormander order of a Fourier integral operator (FIO), the wave front set,
Lagrangian submanifolds, canonical transformations, composition, and the natural
projection rank condition in the Seeger--Sogge--Stein sharpness theorem.
Some proofs are omitted and replaced by references, chiefly to
\cite{sogge_FourierIntegralsClassical_2017,%
hormander_LagrangianDistributionsFourier_2009,%
grigis_MicrolocalAnalysisDifferential_1994}.
We assume only familiarity with smooth manifolds~\cite{warner_FoundationsDifferentiableManifolds_1983}
and the distribution theory~\cite{friedlander_IntroductionTheoryDistributions_2003}.
Throughout, $X$ and $Y$ are smooth
boundaryless manifolds with $n_X:=\dim X$, $n_Y:=\dim Y$; $T^*X\setminus0$ is the
cotangent bundle with the zero section removed; and \emph{conic} means invariant
under the dilations $(x,\xi)\mapsto(x,t\xi)$, $t>0$.  In the running example
$X=\R^d$ and $Y=\Sph^{d-1}\times\R$, so $n_X=n_Y=d$.

\subsection{Oscillatory integrals and symbol classes}
\label{app:osc}

\paragraph{Symbols.}
For $m\in\R$, the symbol class $S^m(X\times\R^N)$ consists of
$a\in C^\infty(X\times\R^N)$ such that, for every compact $K\subset X$ and all
multi-indices $\alpha,\gamma$,
\begin{align*}
    \abs{\partial_x^{\gamma}\partial_\theta^\alpha a(x,\theta)}
    \le C_{K,\alpha,\gamma}\,(1+\abs\theta)^{\,m-\abs\alpha},
    \qquad x\in K,\ \theta\in\R^N.
\end{align*}
A symbol is \emph{classical} (polyhomogeneous) if it
admits an asymptotic expansion $a\sim\sum_{j\ge0}a_{m-j}$ with $a_{m-j}$ positively
homogeneous of degree $m-j$ in $\theta$ for $\abs\theta\ge1$; its \emph{principal
symbol} is $a_m$, and $a$ is \emph{elliptic} if $a_m(x,\theta)\neq0$ for
$\theta\neq0$.

\paragraph{Phase functions.}
A \emph{phase function} on $X$ with $N$ phase variables is a real-valued
$\varphi\in C^\infty\bigl(X\times(\mathbb{R}^N\setminus0)\bigr)$, positively
homogeneous of degree $1$ in $\theta$, with $d_{x,\theta}\varphi\neq0$ throughout
$X\times(\mathbb{R}^N\setminus0)$.  Its \emph{critical set} is
\begin{align}
    C_\varphi:=\bigl\{(x,\theta)\in X\times(\mathbb{R}^N\setminus0):\
    d_\theta\varphi(x,\theta)=0\bigr\},
    \label{eq:critical-set}
\end{align}
and $\varphi$ is \emph{nondegenerate} if the $N$ differentials
$d_{x,\theta}\bigl(\partial_{\theta_1}\varphi\bigr),\dots,
d_{x,\theta}\bigl(\partial_{\theta_N}\varphi\bigr)$ are linearly independent at
every point of $C_\varphi$.  The first condition is what makes \eqref{eq:osc-reg}
below meaningful; the second makes $C_\varphi$ a conic submanifold of codimension
$N$ and produces the Lagrangian of Section~\ref{app:lagrangian}.

\paragraph{Oscillatory integrals.}
Let $\varphi$ be a phase function and $a\in S^m(X\times\mathbb{R}^N)$.  For
$m<-N$ the integral
\begin{align}
    I_\varphi(a)(x)
    :=(2\pi)^{-N}\int_{\mathbb{R}^N}e^{\,i\varphi(x,\theta)}\,a(x,\theta)\,d\theta
    \label{eq:osc-int}
\end{align}
converges absolutely.  For general $m$ it is defined as a distribution by
regularization: fixing $\rho\in C_c^\infty(\mathbb{R}^N)$ with $\rho(0)=1$,
\begin{align}
    \bigl\langle I_\varphi(a),u\bigr\rangle
    :=\lim_{\varepsilon\downarrow0}(2\pi)^{-N}\iint
    e^{\,i\varphi(x,\theta)}\,a(x,\theta)\,\rho(\varepsilon\theta)\,u(x)
    \,d\theta\,dx,
    \qquad u\in C_c^\infty(X),
    \label{eq:osc-reg}
\end{align}
the limit existing in $\mathcal D'(X)$ and being independent of $\rho$.  The
convergence comes from repeated integration by parts with the first-order operator
\begin{align*}
    L=\frac{1}{i\Phi}\Bigl(
    |\theta|^{2}\sum_{j=1}^{N}(\partial_{\theta_j}\varphi)\,\partial_{\theta_j}
    +\sum_{k=1}^{n_X}(\partial_{x_k}\varphi)\,\partial_{x_k}\Bigr),
    \qquad
    \Phi:=|\theta|^{2}|d_\theta\varphi|^{2}+|d_x\varphi|^{2},
\end{align*}
which is nonsingular because $d_{x,\theta}\varphi\neq0$, satisfies
$L\,e^{i\varphi}=e^{i\varphi}$, and whose transpose lowers the amplitude order by
one at each application, at the cost of one $x$-derivative falling on $u$
\cite[Ch.~1]{grigis_MicrolocalAnalysisDifferential_1994}.

\subsection{Fourier integral operators and their order}
\label{app:order}

A \emph{Fourier integral operator} $T:C_c^\infty(X)\to\mathcal D'(Y)$ is an
operator whose Schwartz kernel is locally an oscillatory integral
\begin{align}
    K_T(y,x)=(2\pi)^{-N}\int_{\R^N}e^{\,i\varphi(y,x,\theta)}\,a(y,x,\theta)\,d\theta,
    \qquad a\in S^m,
    \label{eq:fio-kernel}
\end{align}
with $\varphi$ a nondegenerate phase on $Y\times X$ in the sense
of~\eqref{eq:critical-set}.  The operator is \emph{properly supported} if the two
coordinate projections
$\operatorname{supp}K_T\to Y, \, \operatorname{supp}K_T\to X$
are proper maps, which means preimages of compact sets remain compact.
Since all estimates used in this paper are
local, one may insert compact cutoffs or use the standard proper-support
decomposition and work microlocally with properly supported pieces. See
\cite{sogge_FourierIntegralsClassical_2017,
hormander_LagrangianDistributionsFourier_2009}.

The \emph{H\"ormander order} of \eqref{eq:fio-kernel} is
\cite{hormander_LagrangianDistributionsFourier_2009}
\begin{align}
    \mu \,=\, m\,+\,\frac N2\,-\,\frac{n_X+n_Y}{4},
    \label{eq:fio-order}
\end{align}
and one writes $T\in I^\mu(Y,X;\mathcal{C})$, where $\mathcal{C}$ is the canonical relation of
Section~\ref{app:lagrangian}.  The normalization~\eqref{eq:fio-order} is fixed by the
requirement that a pseudodifferential operator of order $m$ have FIO order $m$:
its kernel $(2\pi)^{-n}\!\int e^{i(x-y)\cdot\xi}a(x,\xi)\,d\xi$ on $X=Y=\R^n$ has
$N=n$ phase variables and amplitude order $m$, and \eqref{eq:fio-order} returns
$m+\tfrac n2-\tfrac{2n}{4}=m$.

\paragraph{Pseudodifferential operators.}
The pseudodifferential operators are exactly the FIOs carrying the phase
$(x-y)\cdot\xi$.  We write $\Psi^m(X)$ for those of order $m$: $P\in\Psi^m(X)$ acts
locally by
\begin{align*}
    Pf(x)=(2\pi)^{-n_X}\int_{\R^{n_X}}\int_{\R^{n_X}}
    e^{\,i(x-y)\cdot\xi}\,p(x,\xi)\,f(y)\,dy\,d\xi,
    \qquad p\in S^m.
\end{align*}
Their canonical relation is the diagonal of
$(T^*X\setminus0)\times(T^*X\setminus0)$, the graph of the identity, and unlike a
generic FIO they map $W^{s,p}(X)\to W^{s-m,p}(X)$ for every $1<p<\infty$ with
\emph{no} extra loss (the Calder\'on--Zygmund theory). The loss $\delta_p$ of
Theorem~\ref{thm:sss-fio-lp} is the price of a canonical relation that genuinely
moves the base point, which the diagonal does not.

\paragraph{The Radon transform.}
The Radon transform $\mathcal R:C_c^\infty(\R^d)\to\mathcal D'(Y)$,
$Y=\Sph^{d-1}\times\R$, has Schwartz kernel $\delta(b-w\cdot x)$, i.e. the
oscillatory representation
\begin{align}
    \mathcal Rf(w,b)
    =(2\pi)^{-1}\int_{\R}\int_{\R^d}e^{\,i\tau(b-w\cdot x)}f(x)\,dx\,d\tau,
    \label{eq:radon-oscillatory}
\end{align}
with a single phase variable $\tau\in\R\setminus0$ (so $N=1$), phase
$\varphi=\tau(b-w\cdot x)$, and amplitude $\equiv1$ (so $m=0$).  The phase is
nondegenerate: $\partial_\tau\varphi=b-w\cdot x$, whose differential has
$db$-component $1$ and is therefore nonzero on $C_\varphi=\{b=w\cdot x\}$.  By
\eqref{eq:fio-order} with $n_X=n_Y=d$,
\begin{align*}
    \mu_{\mathcal R}=0+\tfrac12-\tfrac{d+d}{4}=-\tfrac{d-1}{2}.
\end{align*}
The amplitude $\equiv1$ never vanishes, so $\mathcal R$ is elliptic.  Its canonical
relation is identified in Section~\ref{app:lagrangian}, and the order, ellipticity, and
local-graph property are assembled into a proof of Lemma~\ref{lem:radon-fio} in
Section~\ref{app:radon-fio-proof}.

\subsection{The wave front set}
\label{app:wf}

\paragraph{Heuristic.}
The Fourier transform turns smoothness into decay: a compactly supported
distribution is smooth exactly when its Fourier transform is rapidly decreasing.
Localizing by a cutoff $\phi$ supported near a point $x_0$, the piece $\phi u$ is
smooth near $x_0$ if and only if $\widehat{\phi u}(\xi)$ decays rapidly in
every direction.  A singularity at $x_0$ persists only along those frequency
directions $\xi$ in which $\widehat{\phi u}$ fails to rapidly decay.  The wave front
set reflects precisely this directional refinement of the singular support: not
merely where $u$ is singular, but the non-smooth directions. 
These directions are intrinsically covectors: the
frequency $\xi$ enters only through the phase $x\cdot\xi$, where it is paired with
the displacement $x$, so under a change of coordinates $x=\kappa(\tilde x)$ it
transforms by the inverse transpose ${}^{t}(d\kappa)^{-1}$ of the Jacobian, which is the
transformation law of a cotangent vector, the one that keeps $\xi\,dx$ invariant.
The set of non-decay directions at $x_0$ is thus a conic subset of the fibre
$T^*_{x_0}X$, and $WF(u)$ lives in the cotangent bundle $T^*X\setminus0$ rather than
in $TX$.

\paragraph{Definition.}
For $u\in\mathcal D'(X)$, a point $(x_0,\xi_0)\in T^*X\setminus0$ is
\emph{not} in the wave front set $WF(u)$ if there exist $\phi\in C_c^\infty(X)$
with $\phi(x_0)\neq0$ and an open cone $\Gamma\ni\xi_0$ such that, in a coordinate
chart,
\begin{align*}
    \abs{\widehat{\phi u}(\xi)}\le C_M\,(1+\abs\xi)^{-M}
    \qquad\text{for all }\xi\in\Gamma\text{ and all }M\in\N.
\end{align*}
Thus $WF(u)\subset T^*X\setminus0$ is closed, conic, and coordinate-free.  Let
$\pi_X:T^*X\to X$, $(x,\xi)\mapsto x$, denote the base (bundle) projection; then
\begin{align*}
    \pi_X\bigl(WF(u)\bigr)=\operatorname{sing\,supp}u,
\end{align*}
so the wave front set refines the singular support.  Pseudodifferential operators
are \emph{microlocal}: $WF(Pu)\subset WF(u)$ for every $P\in\Psi^m(X)$.

\paragraph{Examples.}
The point mass has full cotangent fibre over its support,
$WF(\delta_{x_0})=\{x_0\}\times(\R^{n_X}\setminus0)$.  More generally, if
$S\subset X$ is a smooth submanifold and $u$ is \emph{conormal} to $S$ (e.g.\ the
surface delta $\delta_S$), then
\begin{align}
    WF(u)\subset N^*S\setminus0,
    \qquad
    N^*S:=\bigl\{(x,\xi)\in T^*X:\ x\in S,\ \xi|_{T_xS}=0\bigr\},
    \label{eq:conormal-wf}
\end{align}
the conormal bundle of $S$.  In particular the Radon kernel $\delta(b-w\cdot x)$
is conormal to the incidence manifold $Z=\{(w,b,x):b=w\cdot x\}\subset Y\times X$,
so $WF\bigl(K_{\mathcal R}\bigr)\subset N^*Z\setminus0$; after the sign twist of
Section~\ref{app:lagrangian} this conormal bundle is exactly the canonical relation
$\mathcal{C}_{\mathcal R}$ of $\mathcal R$ computed in \eqref{eq:radon-canonical-relation}
below.

\paragraph{Singularities under the Radon transform.}
The geometric meaning of $\mathcal R$ being an FIO with canonical relation
$\mathcal{C}_{\mathcal R}$ is that it transports wave front sets along $\mathcal{C}_{\mathcal R}$:
\begin{align}
    WF(\mathcal Rf)\subset \mathcal{C}_{\mathcal R}\circ WF(f).
    \label{eq:radon-wf-propagation}
\end{align}
Concretely, a singularity of $f$ at $(x_0,\xi_0)$ produces a singularity of
$\mathcal Rf$ over precisely the hyperplane through $x_0$ whose unit normal is
$w=\pm\,\xi_0/\abs{\xi_0}$, at bias $b=w\cdot x_0$.  The intuition is transversal
averaging: integrating $f$ over a moving family of hyperplanes detects a jump of
$f$ across a hypersurface only when the integration plane is \emph{tangent} to that
hypersurface (its normal aligned with $\xi_0$). Planes meeting the singularity
transversally average it away and leave $\mathcal Rf$ smooth there.  Estimate
\eqref{eq:radon-wf-propagation} is the qualitative shadow of the quantitative
$L^p$--Sobolev bound the SSS theorem provides.

\subsection{Lagrangian submanifolds and canonical relations}
\label{app:lagrangian}

\paragraph{Lagrangians.}
Equip $T^*X$ with the tautological one-form $\alpha_X=\xi\,dx=\sum_j\xi_j\,dx_j$
and the canonical symplectic form $\omega_X=d\alpha_X=\sum_j d\xi_j\wedge dx_j$.  A
submanifold $\Lambda\subset T^*X$ is \emph{Lagrangian} if $\dim\Lambda=n_X$ and
$\omega_X|_\Lambda=0$.  The Lagrangians relevant to FIOs are conic.  Two model
families: the conormal bundles $N^*S$ of \eqref{eq:conormal-wf}, and the
phase-generated Lagrangians
\begin{align*}
    \Lambda_\varphi
    =\bigl\{(x,d_x\varphi(x,\theta)):\ (x,\theta)\in C_\varphi\bigr\},
\end{align*}
which is a conic Lagrangian whenever $\varphi$ is nondegenerate; conversely, every
conic Lagrangian is locally of this form
\cite[\S3.1]{hormander_LagrangianDistributionsFourier_2009}.  A distribution
$u\in I^m(X,\Lambda)$ associated with $\Lambda$ satisfies $WF(u)\subset\Lambda$,
and FIO kernels are exactly the Lagrangian distributions whose Lagrangian is the
twist of a canonical relation.

\paragraph{Canonical relations, transformations, and the twist.}
On the product $T^*Y\times T^*X$, let $\operatorname{pr}_Y$ and $\operatorname{pr}_X$
denote the two factor projections, and form the \emph{twisted} symplectic form
\begin{align}
    \omega_Y\ominus\omega_X
    :=\operatorname{pr}_Y^*\omega_Y-\operatorname{pr}_X^*\omega_X .
    \label{eq:twisted-form}
\end{align}
A \emph{canonical relation} is a conic Lagrangian
$\mathcal{C}\subset(T^*Y\setminus0)\times(T^*X\setminus0)$ for $\omega_Y\ominus\omega_X$.  A
\emph{homogeneous canonical transformation} is a homogeneous diffeomorphism
$\chi:T^*X\setminus0\to T^*Y\setminus0$ with $\chi^*\omega_Y=\omega_X$.  The two
notions stand to each other as \emph{map} to \emph{graph}: the graph
$\mathcal{C}_\chi=\{(\chi(x,\xi);\,x,\xi):(x,\xi)\in T^*X\setminus0\}$ of a canonical
transformation is a canonical relation, and the identity $\chi^*\omega_Y=\omega_X$
is exactly what makes $\mathcal{C}_\chi$ Lagrangian for the \emph{minus} sign
in~\eqref{eq:twisted-form}.  Canonical relations are strictly more general: 
a relation need not be a graph, since its projections to the two factors
may degenerate. It is locally a graph and then the graph of a canonical
transformation precisely under the condition isolated below.

\emph{The twist.}  The minus sign in~\eqref{eq:twisted-form} is what couples the
two factors symplectically, but it prevents $\mathcal{C}$ from being a Lagrangian submanifold
in the usual (untwisted) sense.  The fibrewise antipode
$\iota:(x,\xi)\mapsto(x,-\xi)$ on the $X$-factor repairs this: since
$\iota^*\omega_X=-\omega_X$, applying $\mathrm{id}_{T^*Y}\times\iota$ converts a
Lagrangian for $\omega_Y\ominus\omega_X$ into an honest Lagrangian for the
\emph{sum} $\omega_{Y\times X}=\omega_Y\oplus\omega_X$, the canonical symplectic
form of $T^*(Y\times X)$.  Accordingly the kernel of $T\in I^\mu(Y,X;\mathcal{C})$ is a
Lagrangian distribution on $Y\times X$ associated with the untwisted Lagrangian
\begin{align*}
    \mathcal{C}'=\bigl\{(y,\eta;\,x,-\xi):(y,\eta;\,x,\xi)\in \mathcal{C}\bigr\}\subset T^*(Y\times X),
\end{align*}
and the order $\mu$ of $T$ is, by definition, the order of this Lagrangian
distribution.

\paragraph{The local-graph condition.}
The two projections
\begin{align*}
    \pi_L:\mathcal{C}\to T^*Y\setminus0,
    \qquad
    \pi_R:\mathcal{C}\to T^*X\setminus0
\end{align*}
need not be diffeomorphisms in general.  One says $\mathcal{C}$ is a \emph{local canonical
graph} when both $\pi_L$ and $\pi_R$ are local diffeomorphisms, equivalently when
$\mathcal{C}$ is locally the graph $\mathcal{C}_\chi$ of a canonical transformation $\chi$.  This
nondegeneracy is precisely the geometric hypothesis under which the
Seeger--Sogge--Stein $L^p$--Sobolev regularity theorem
(Theorem~\ref{thm:sss-fio-lp}) applies, with the sharp loss
$\delta_p=(d-1)\abs{1/p-1/2}$.

\subsection{Proof of the Radon FIO lemma}
\label{app:radon-fio-proof}

We can now collect the verification deferred from the main text and prove
Lemma~\ref{lem:radon-fio}.

\begin{proof}[Proof of Lemma~\ref{lem:radon-fio}]
We argue for $\mathcal R$; the dual $\mathcal R^*$ follows by transposition.
Throughout we use the standard FIO calculus for smooth densities: passing from
half-densities to the product measures $d\sigma(w)\,db$ on $Y$ and $dx$ on $\R^d$
conjugates the operators by smooth elliptic factors and alters neither the order
nor the canonical relation
\cite[\S6]{guillemin_GeometricAsymptotics_1977},
\cite[App.~A]{quinto_IntroductionXrayTomography_2006}.

\emph{FIO structure, order, ellipticity.}  The oscillatory representation
\eqref{eq:radon-oscillatory} exhibits $\mathcal R$ as a Fourier integral operator
with a single phase variable ($N=1$), phase $\varphi=\tau(b-w\cdot x)$ homogeneous
of degree one, and amplitude $\equiv1$ (so $m=0$).  The phase is nondegenerate,
since $\partial_\tau\varphi=b-w\cdot x$ has $d_{w,b,x}(\partial_\tau\varphi)$ with
$db$-component $1$, hence nonzero, on $C_\varphi=\{b=w\cdot x\}$.  The order formula
\eqref{eq:fio-order} with $n_X=n_Y=d$ gives $\mu_{\mathcal R}=-(d-1)/2$, and the
nonvanishing amplitude makes $\mathcal R$ elliptic.

\emph{Canonical relation.}  Identify $T_w^*\Sph^{d-1}\cong T_w\Sph^{d-1}$ by the
round metric and write $x_w^\top:=x-(x\cdot w)w$.  Differentiating $\varphi$ gives
$d_b\varphi=\tau$, $d_w\varphi=-\tau x_w^\top$, and $-d_x\varphi=\tau w$, so
\begin{align}
    C_{\mathcal R}
    =\bigl\{(w,b;-\tau x_w^\top,\tau;\ x,\tau w):\ b=w\cdot x,\ \tau\neq0\bigr\}
    \subset(T^*Y\setminus0)\times(T^*\R^d\setminus0).
    \label{eq:radon-canonical-relation}
\end{align}

\emph{Local-graph property.}  The input covector is $\xi=\tau w$, so for fixed
$(x,\xi)$ there are exactly the two antipodal solutions
$(w,\tau)=(\xi/\abs\xi,\abs\xi)$ and $(-\xi/\abs\xi,-\abs\xi)$; a conic cutoff
selecting one sign of $\tau$ removes the second branch.  On $\tau>0$ the relation
\eqref{eq:radon-canonical-relation} is the graph of the homogeneous canonical
transformation
\begin{align*}
    (x,\xi)\ \longmapsto\
    \Bigl(\tfrac{\xi}{\abs\xi},\ x\cdot\tfrac{\xi}{\abs\xi}\,;\
    -\abs\xi\,x_w^\top,\ \abs\xi\Bigr),
    \qquad w=\tfrac{\xi}{\abs\xi},
\end{align*}
with smooth inverse $x=bw-\eta_w/\eta_b$, $\xi=\eta_b w$ (here $\eta_w,\eta_b$ are
the $T^*Y$ components $-\tau x_w^\top,\tau$); the branch $\tau<0$ is identical with
$\eta_b<0$.  Each branch is therefore a local canonical graph, so every compactly
cut-off piece $\chi_Y\mathcal R\chi_X$ meets the geometric hypothesis of
Theorem~\ref{thm:sss-fio-lp}.

\emph{Natural projection rank.}
The sharpness part of Theorem~\ref{thm:sss-fio-lp} uses the natural projection of
$\mathcal{C}_{\mathcal R}$ to the base variables $(w,b,x)\in Y\times\R^d$.  On the branch
$\tau>0$, parametrize the relation by $(x,w,\tau)$ with $w\in\Sph^{d-1}$ and
$\tau>0$:
\begin{align*}
    (x,w,\tau)
    \longmapsto
    (w,b,x)=(w,w\cdot x,x).
\end{align*}
Its differential sends $(\delta x,\delta w,\delta\tau)$ to
\begin{align*}
    \bigl(\delta w,\, w\cdot\delta x+x\cdot\delta w,\,\delta x\bigr).
\end{align*}
If this vanishes, then $\delta w=0$ and $\delta x=0$, while $\delta\tau$ is free.
Thus the kernel is exactly the radial direction in the conic fibre and the rank is
$2d-1$, the maximal possible rank for a conic canonical relation.  The branch
$\tau<0$ is identical, and transposition gives the same rank statement for
$\mathcal{C}_{\mathcal R}^t$.  By Section~\ref{app:fio-composition}, $\mathrm{A}^k$ and the
synthesis map carry the same phase as $\mathcal R$ and $\mathcal R^*$, hence the same
canonical relations, and inherit this maximal-rank condition verbatim.

\emph{The dual transform.}  $\mathcal R^*$ has the transposed Schwartz kernel
$\delta(b-w\cdot x)$, whence $\mathcal R^*\in I^{-(d-1)/2}(\R^d,Y;\mathcal{C}_{\mathcal R}^t)$
with the transposed relation $\mathcal{C}_{\mathcal R}^t$, the graph of $\chi^{-1}$.  Since
the transpose of a local canonical graph is again one, the cut-off pieces
$\chi_X\mathcal R^*\chi_Y$ satisfy the same hypothesis.

Both $\mathcal R$ and $\mathcal R^*$ are thus elliptic Fourier integral operators
of order $-(d-1)/2$ whose canonical relations, after the conic branch cutoff, are
local canonical graphs, which is the assertion of Lemma~\ref{lem:radon-fio}.
\end{proof}

\subsection{Oscillatory kernels of the analysis and synthesis maps}
\label{app:fio-composition}

We give more details of Lemma~\ref{lem:inverse-fio-pair}.  Both $\mathrm{A}^k$ and
the synthesis map carry the phase of $\mathcal R$ itself, and the multiplier they carry
in the bias variable enters as an \emph{amplitude}, whose order and ellipticity can be
read off directly.

\paragraph{The two kernels.}
Write $m(D_b):=\mathcal F_b^{-1}m\,\mathcal F_b$ and
\begin{align}
    m_A(\tau):=(i\tau)^{k+1}\abs{\tau}^{d-1},
    \qquad
    m_S:=\mathcal F\sigma_k ,
    \label{eq:two-multipliers}
\end{align}
so that $\mathrm{A}^k=c_d\,m_A(D_b)\mathcal R$ by Definition~\ref{def:canonical-density}
and $v_g=\mathcal R^{*}m_S(D_b)g^{R}$ by \eqref{eq:AR-as-composition}.  Inserting the slice theorem \eqref{eq:Fourier-slice} into \eqref{eq:radon-oscillatory}
and into its transpose, the Schwartz kernels are, as identities in $\mathcal S'$,
\begin{align}
    K_{\mathrm A}(w,b;x)
    &=\frac{c_d}{2\pi}\int_{\R}e^{\,i\tau(b-w\cdot x)}\,m_A(\tau)\,d\tau,
    \label{eq:analysis-kernel}\\
    K_{\mathrm S}(x;w,b)
    &=\frac{1}{2\pi}\int_{\R}e^{\,i\tau(w\cdot x-b)}\,m_S(\tau)\,d\tau ,
    \label{eq:synthesis-kernel}
\end{align}
the integrals being the inverse partial Fourier transforms in $\tau$ of the tempered
distributions $c_dm_A$ and $m_S$.  These are not yet oscillatory integrals in the sense
of \eqref{eq:fio-kernel}: $m_S=\mathcal F\sigma_k$ is a distribution and not a symbol.  
The phase function is the Radon phase $\varphi=\tau(b-w\cdot x)$, 
resp.\ its negative, with the single phase
variable $N=1$, nondegenerate with critical set $\{b=w\cdot x\}$ exactly as in
Section~\ref{app:radon-fio-proof}.  The splitting below turns each kernel into two
genuine oscillatory integrals, carrying the two branches of $\mathcal{C}_{\mathcal R}$,
resp.\ $\mathcal{C}_{\mathcal R}^{t}$, plus one smooth kernel.

\paragraph{Two cutoffs, three pieces.}
Since $\partial_t^{k+1}\sigma_k=\delta_0$ gives $(i\tau)^{k+1}m_S=1$ in
$\mathcal S'(\R)$,
\begin{align}
    m_S(\tau)=(i\tau)^{-(k+1)},
    \qquad \tau\neq0 .
    \label{eq:mS-off-origin}
\end{align}
Both $m_A$ and $m_S$ are thus smooth and nonvanishing on each open cone
$\{\pm\tau>0\}$, positively homogeneous there of degrees $k+d$ and $-(k+1)$, and
singular only at $\tau=0$.  Fix $\chi_0\in C_c^\infty(\R)$ with $\chi_0\equiv1$ on
$[-1,1]$ and $\supp\chi_0\subset[-2,2]$, and split, for $m\in\{m_A,m_S\}$,
\begin{align}
    m=a_++a_-+\chi_0m,
    \qquad
    a_\pm:=\mathbf 1_{\{\pm\tau>0\}}\,(1-\chi_0)\,m .
    \label{eq:amplitude-split}
\end{align}
Because $1-\chi_0$ vanishes near the origin, \eqref{eq:mS-off-origin} makes
$(1-\chi_0)m_S$ the smooth function $(1-\chi_0)(i\tau)^{-(k+1)}$, so the products in
\eqref{eq:amplitude-split} are well defined although $m_S$ is only a distribution: any
part of $m_S$ supported at $\tau=0$ is carried by $\chi_0m_S$.  Thus
$a_\pm\in C^\infty(\R)$, and \eqref{eq:amplitude-split} is an exact identity in
$\mathcal S'(\R)$.  The three terms play three distinct roles and the same is true if 
$\chi_0$ is replaced by any other such cutoff, which changes each piece by a smooth kernel.

\emph{$a_\pm$: two elliptic FIOs on single canonical graphs.}  Each $a_\pm$ is a
classical symbol of order $\deg m$ supported in $\{\pm\tau>0\}$, positively homogeneous
there for $\abs\tau\ge1$, with $\abs{a_\pm(\tau)}=\abs{\tau}^{\deg m}\neq0$ on its
support for $\abs\tau\ge1$: it is elliptic, on the whole cone and not merely at one
point.  Substituting $a_\pm$ for $m$ in \eqref{eq:analysis-kernel} or
\eqref{eq:synthesis-kernel} restricts the canonical relation to the branch
$\pm\tau>0$, which is a local canonical graph by Section~\ref{app:radon-fio-proof}.
This is the concrete form of the ``conic cutoff'' of Lemma~\ref{lem:radon-fio}: a
cutoff in $\tau$, that is, a Fourier multiplier in $b$.  The order formula
\eqref{eq:fio-order}, with $N=1$ and $n_X=n_Y=d$, gives
\begin{align*}
    \underbrace{(k+d)}_{\deg m_A}+\tfrac12-\tfrac d2=s_k,
    \qquad
    \underbrace{-(k+1)}_{\deg m_S}+\tfrac12-\tfrac d2=-s_k .
\end{align*}

\emph{$\chi_0m$: a smoothing operator.}  The distribution $\chi_0m$ is compactly
supported, so by Paley--Wiener--Schwartz
$\kappa:=\mathcal F^{-1}(\chi_0m)$ is smooth with at most polynomial growth.  The
corresponding kernels in \eqref{eq:analysis-kernel}--\eqref{eq:synthesis-kernel} are
$c_d\,\kappa(b-w\cdot x)$ and $\kappa(w\cdot x-b)$, smooth in all variables.  On the
bounded window only $b=w\cdot x$ with $x$ in a compact set is sampled, so H\"older's
inequality on $\MR$ gives, for every $N$ and every compact $K\subset\R^d$,
\begin{align*}
    \norm{\mathcal R^{*}\bigl[(\chi_0m_S)(D_b)\,g^{R}\bigr]}_{C^N(K)}
    \le C_{N,K}\,\norm{g}_{L^p(\MR)},
\end{align*}
and likewise $c_d(\chi_0m_A)(D_b)\mathcal R$ maps $L^1(B_A(0))$ into $C^N$ of any
compact subset of $Y$.  When $d$ is odd this term is inessential: $m_A$ is then a
polynomial and $m_A(D_b)$ a differential operator.

\paragraph{The one composition that is needed.}
The sharpness argument of Section~\ref{app:sharpness} composes two Fourier integral
operators.  If $\mathcal{C}_1=\mathrm{graph}(\chi_1)$ and $\mathcal{C}_2=\mathrm{graph}(\chi_2)$
are local canonical graphs whose composition is transversal, then
\begin{align}
    I^{\mu_1}(Z,Y;\mathcal{C}_1)\circ I^{\mu_2}(Y,X;\mathcal{C}_2)
    \subset I^{\mu_1+\mu_2}\bigl(Z,X;\mathrm{graph}(\chi_1\circ\chi_2)\bigr)
    \label{eq:fio-fio-composition}
\end{align}
\cite[Ch.~6]{sogge_FourierIntegralsClassical_2017},
\cite{hormander_LagrangianDistributionsFourier_2009}.  Composing a graph with the
transpose of the same graph gives the diagonal $\Delta_{T^*Y}$.  Hence pairing the
branch $a_+$ of $\mathrm{A}^k$ with the branch $a_+$ of the synthesis map, and $a_-$
with $a_-$, produces operators in $\Psi^0(Y)$, bounded on every $L^p(Y)$,
$1<p<\infty$, by Calder\'on--Zygmund theory.  The two mixed pairs produce order-zero
Fourier integral operators whose canonical relation is the lift of the antipodal
diffeomorphism $(w,b)\mapsto(-w,-b)$ of $Y$; each is a pseudodifferential operator
composed with a pullback by a diffeomorphism, hence also bounded on every $L^p(Y)$ with
no Seeger--Sogge--Stein loss.  Order zero by itself would not give this: a general
order-zero Fourier integral operator loses $\delta_p$.  The sum of the four pieces is
the operator $B$ of Section~\ref{app:sharpness}.

\subsection{Sharpness of the sandwich}
\label{app:sharpness}

We prove Proposition~\ref{prop:radon-lp-sandwich-sharp} by reducing each half of the
sandwich, via the assumed improved embedding, to an $L^p$--Sobolev bound for one of the
FIOs $\mathrm A^k$ (order $s_k$) or $\mathcal R^{*}m_S(D_b)$ (order $-s_k$), localized to a
single branch of $\mathcal C_{\mathcal R}$ resp.\ $\mathcal C_{\mathcal R}^{t}$, and then
invoking the sharpness part of Theorem~\ref{thm:sss-fio-lp}.

\paragraph{A wave-packet example.}
The loss is also attained by explicit extremizers: in coordinates adapted to one branch,
the plate packets $f_\lambda(x)=e^{i\lambda x_d}\phi(\lambda^{\beta}x')\zeta(x_d)$, with
$\beta=0$ for $p\ge2$ and $\beta=1$ for $1<p\le2$, satisfy
$\norm{\mathcal Rf_\lambda}_{W^{\rho,p}(Y)}/\norm{f_\lambda}_{L^p(\R^d)}\sim\lambda^{\rho-\rho_p}$,
so that no gain beyond $\rho_p$ is admissible; see
\cite{peral_LpEstimatesWave_1980,seeger1991regularity,sogge_FourierIntegralsClassical_2017}.
These families are not needed below, where sharpness is used only through
Theorem~\ref{thm:sss-fio-lp}.

\paragraph{Cutoffs.}
Fix any point of the branch $\tau>0$, with base points $x_0\in\Omega$ on $\R^d$ and
$y_0\in\operatorname{int}(\MR)$ on $Y$. Choose $\chi,\eta\in C_c^\infty(\Omega)$ with
$\chi=1$ near $x_0$ and $\eta=1$ on $\supp\chi$, and $\psi\in C_c^\infty(Y)$ equal to $1$
near $y_0$ and supported away from the antipode $(-y_0)$. Choose zeroth-order cutoffs
$Q_X\in\Psi^0(\R^d)$ and $Q_Y\in\Psi^0(Y)$, both properly supported and microlocally
selecting the branch $\tau>0$ near the fixed point, with the range of $Q_X$ contained in
$\{\chi=1\}$ and the range of $Q_Y$ contained in $\operatorname{int}(\MR)$. For $\sigma\ge0$,
\begin{align}
    \norm{\chi F}_{W^{\sigma,p}(\R^d)}\le C\norm{F}_{W^{\sigma,p}(\Omega)},
    \qquad
    \norm{v}_{W^{\sigma,p}(\Omega)}\le\norm{v}_{W^{\sigma,p}(\R^d)},
    \label{eq:sharp-norm-transfer}
\end{align}
the first by the extension operator of the Lipschitz domain $\Omega$ and $\supp\chi\subset\Omega$,
the second by definition of the restriction norm.

\paragraph{Right side: $r_{k,p}$ cannot be raised.}
Suppose \eqref{eq:no-right-improvement} held, and set $\sigma:=r_{k,p}+\varepsilon$. Since the
range of $Q_Y$ lies in $\operatorname{int}(\MR)$, every $Q_Yf$ satisfies $(Q_Yf)^{R}=Q_Yf$, so
the truncation is invisible and $\mathrm S^k_RQ_Yf=\bigl(\mathcal R^{*}m_S(D_b)Q_Yf\bigr)|_\Omega$.
The quotient bound $\norm{\mathrm S^k_RQ_Yf}_{\mathcal RL^p_k(\Omega)}\le\norm{Q_Yf}_{L^p(\MR)}$,
the assumed embedding, the $L^p$-boundedness of $Q_Y$, and \eqref{eq:sharp-norm-transfer} give
\begin{align*}
    \norm{T_{\mathrm S}f}_{W^{\sigma,p}(\R^d)}\le C\norm{f}_{L^p(Y)},
    \qquad
    T_{\mathrm S}:=\chi\,\mathcal R^{*}m_S(D_b)\,Q_Y .
\end{align*}
Modulo a smooth kernel $T_{\mathrm S}$ is an elliptic graph FIO of
order $-s_k$ carrying the branch $\tau>0$, with maximal projection rank.
Theorem~\ref{thm:sss-fio-lp} therefore caps its output smoothness on $L^p=W^{0,p}$ at
$0-(-s_k)-\delta_p=r_{k,p}<\sigma$, a contradiction. This proves \eqref{eq:no-right-improvement}.

\paragraph{Left side: $s_{k,p}$ cannot be lowered.}
Suppose \eqref{eq:no-left-improvement} held, and let $v\in C_c^\infty(\{\chi=1\})$. By the
quotient-norm definition there is $h\in L^p(\MR)$ with $\mathrm S^k_Rh=v$ on $\Omega$ and
$\norm{h}_{L^p(\MR)}\le 2C\norm{v}_{W^{s_{k,p}-\varepsilon,p}(\Omega)}$. Since $\eta=1$ on
$\supp v\subset\Omega$ and $\eta\in C_c^\infty(\Omega)$, one has $v=\eta\,\mathrm S^k_Rh$ on all
of $\R^d$; this global identity matters because $\mathrm A^k$ is nonlocal. Hence
$\psi\,\mathrm A^kv=Bh$ with $B:=\psi\,\mathrm A^k\eta\,\mathrm S^k_R$, which by
Lemma~\ref{lem:inverse-fio-pair} and \eqref{eq:fio-fio-composition} is bounded on $L^p(Y)$.
With \eqref{eq:sharp-norm-transfer},
\begin{align*}
    \norm{\psi\,\mathrm A^kv}_{L^p(Y)}\le C\norm{v}_{W^{s_{k,p}-\varepsilon,p}(\R^d)},
    \qquad v\in C_c^\infty(\{\chi=1\}).
\end{align*}
Since $Q_X$ has range in $\{\chi=1\}$ and is bounded on every $W^{t,p}(\R^d)$, applying this to
$v=Q_Xf$ gives
\begin{align*}
    \norm{T_{\mathrm A}f}_{L^p(Y)}\le C\norm{f}_{W^{s_{k,p}-\varepsilon,p}(\R^d)},
    \qquad
    T_{\mathrm A}:=\psi\,\mathrm A^kQ_X .
\end{align*}
Modulo a smooth kernel $T_{\mathrm A}$ is an elliptic graph FIO of order $s_k$ carrying the
branch $\tau>0$, with maximal projection rank. For input smoothness $s_{k,p}-\varepsilon$,
Theorem~\ref{thm:sss-fio-lp} gives the critical output smoothness
$(s_{k,p}-\varepsilon)-s_k-\delta_p=-\varepsilon<0$, ruling out the displayed
$L^p=W^{0,p}$ bound. This proves \eqref{eq:no-left-improvement} and completes the proof of
Proposition~\ref{prop:radon-lp-sandwich-sharp}.

\newpage
\bibliographystyle{plain}
\bibliography{ref.bib}

\end{document}